\theoremstyle{plain}
\newtheorem{thm}[subsection]{Theorem}
\newtheorem{lem}[subsection]{Lemma}
\newtheorem{prop}[subsection]{Proposition}
\newtheorem{cor}[subsection]{Corollary}
\theoremstyle{definition}
\newtheorem{rem}[subsection]{Remark}
\newtheorem{defn}[subsection]{Definition}
\newtheorem{exmp}[subsection]{Example}
\newtheorem{notn}[subsection]{Notation}
\newtheorem{headtheorem}{Theorem}
\numberwithin{equation}{section}
\def\deg{{\mathrm{deg}}}
\def\ann{{\mathrm{ann}}}
\def\Fitt{{\mathrm{Fitt}}}
\def\KK{{\mathbb{K}}}
\def\PP{{\mathbb{P}}}
\def\ZZ{{\mathbb{Z}}}
\def\HH{{\mathbb{H}}}
\def\NN{{\mathbb{N}}}
\def\N{{\mathbb{N}}}
\def\MM{{\mathbb{M}}}
\def\Spec{{\mathrm{Spec}}}
\def\Sylv{{\mathrm{Sylv}}}
\def\Syz{{\mathrm{Syz}}}
\def\sylv{{\mathrm{sylv}}}
\def\HF{{\mathrm{HF}}}
\def\HP{{\mathrm{HP}}}
\def\Hom{{\mathrm{Hom}}}
\def\mm{{\mathfrak{m}}}
\def\bb{{\mathfrak{b}}}
\def\bx{{\bm{x}}}
\def\bff{{\bm{f}}}
\def\bU{{\bm{U}}}
\def\bF{{\bm{F}}}
\def\bd{{\bm{d}}}
\def\bnu{{\bm{\nu}}}
\def\bmu{{\bm{\mu}}}
\def\balpha{{\bm{\alpha}}}
\def\bgamma{{\bm{\gamma}}}
\def\bbeta{{\bm{\beta}}}
\def\bdelta{{\bm{\delta}}}
\def\b0{{\bm{0}}}
\DeclareMathOperator{\Supp}{ Supp}
\DeclareMathOperator{\sat}{ sat}
\def\Dc{{\mathcal{D}}}
\def\Jc{{\mathcal{J}}}
\def\Af{{\mathfrak{A}}}
\newcommand{\red}[1]{\textcolor{red}{#1}}
\newcommand{\yellow}[1]{\textcolor{yellow}{#1}}
\newcommand{\green}[1]{\textcolor{green}{#1}}
\begin{document}

\title{Multigraded Sylvester forms, Duality and Elimination Matrices}

\author{Laurent Busé}
\address{Université Côte d'Azur, Inria, 2004 route des Lucioles, 06902 Sophia Antipolis, France.}
\email{laurent.buse@inria.fr}

\author{Marc Chardin}
\address{Institut de Mathématiques de Jussieu, CNRS \& Sorbonne Université, France}
\email{marc.chardin@imj-prg.fr}

\author{Navid Nemati}
\address{Université Côte d'Azur, Inria, 2004 route des Lucioles, 06902 Sophia Antipolis, France.}
\email{navid.nemati@inria.fr}

\keywords{Polynomial systems, resultants, elimination matrices, Sylvester forms, multigraded polynomials}

\begin{abstract}
In this paper we study the equations of the elimination ideal associated with $n+1$ generic multihomogeneous polynomials defined over a product of projective spaces of dimension $n$. We first prove a duality property and then make this duality explicit by introducing multigraded Sylvester forms. These results provide a partial generalization of similar properties that are known in the setting of homogeneous polynomial systems defined over a single projective space. As an important consequence, we derive a new family of elimination matrices that can be used for solving zero-dimensional multiprojective polynomial systems by means of linear algebra methods.  
\end{abstract}

\maketitle

\section{Introduction}\label{sec:introduction}

The elimination of several variables from a system of homogeneous polynomial equations is a fundamental operation that is involved in many computational methods in algebraic geometry. It has received a lot of interest in the existing literature, from the very beginning of elimination theory with contributions by Cayley, Sylvester, Macaulay, and many others, to its more recent developments deeply rooted in modern algebraic geometry (e.g.~\cite{GKZ}). The geometric interpretation of elimination is in terms of projection maps, as illustrated by the famous Main Theorem of elimination theory \cite[Chapter V]{EH00}. On the algebraic side, the elimination of variables from homogeneous equations relies on the saturation of the ideal generated by these equations with respect to the ideal generated by the variables. The importance of this saturated ideal was already noticed from the classical literature on elimination theory; polynomials in this ideal are called \emph{inertia forms} after Hurwitz. The case of generic complete intersections has been extensively studied and structural results are known, especially in the case of $n+1$ generic homogeneous polynomials in $n+1$ variables, as it corresponds to the setting of the Macaulay resultant, which is of particular interest. To be more precise, let us introduce some notation.

Let $F_0,\ldots,F_n$ be the $n+1$ homogeneous polynomials in the variables $x_0,\ldots,x_n$ with indeterminate coefficients. We denote by $d_i$ the degree of $F_i$ which is a homogeneous polynomial in the graded polynomial ring $C=A[x_0,\ldots,x_n]$, where $A$ stands for the universal ring of coefficients, i.e.~the polynomial ring over the integers whose variables are all the coefficients of $F_0,\ldots,F_n$ (see Section \ref{subsec:MacMat} for more details). The saturation of the ideal $I=(F_0,\ldots,F_n)$ with respect to the ideal $\mm=(x_0,\ldots,x_n)$ is the ideal $I^{\sat}:=(I:\mm^\infty)$. It is well known that the graded component of degree 0 of $I^{\sat}$, $(I^{\sat})_0=I^{\sat}\cap A$, is a principal and prime ideal of $A$ generated by the Macaulay resultant of $F_0,\ldots,F_n$. Setting $\delta:=\sum_{i=0}^{n}(d_i-1)$, we also have that $I^{\sat}_\nu=I_{\nu}$ for all $\nu>\delta$, a property that is often summarized by saying that all inertia forms of degree $\nu>\delta$ are trivial inertia forms, as they can be obtained as polynomial combinations of the $F_i$'s. Actually, there exists a duality that provides many structural properties of the quotient $I^{\sat}/I$. This duality can be made explicit by means of Bezoutian determinants, also called Morley forms; see \cite{J91,J97,SS01} (where this duality is actually proved in the more general context of anisotropic graded rings). In particular, one gets that the graded components $(I^{\sat}/I)_\nu$ are free $A$-modules for all $\nu>\delta-\min_i d_i$ and explicit bases are known. More precisely, the graded component of degree $\delta$ is an $A$-module of rank one which is generated by the (twisted) Jacobian determinant of the polynomials $F_0,\ldots,F_n$. Basis for the other graded components are obtained by means of Sylvester forms associated with $F_0,\ldots,F_n$, as proved in \cite{J97} (see Section \ref{subsec:SGSylvForm} for a review of this construction). For lower degree $\nu$, to the best of our knowledge, their is no known explicit sets of generators in general; except for very particular settings as the case $n=1$ \cite{J97,Bus09}.

An important application of the above results is solving zero-dimensional polynomial systems, with coefficients in a field $\KK$, by means of linear algebra techniques. The main ingredient is to build matrices whose columns are filled with the coefficients of some inertia forms of a given degree $\nu$ with respect to a given basis of homogeneous polynomials of degree $\nu$ (e.g.~the canonical monomial basis). Typical examples, in the generic setting, are matrices of the maps (of free $A$-modules)
\begin{eqnarray}\label{eq:FirstKoszulMap}
	\oplus_{i=0}^n C_{\nu-d_i} & \xrightarrow{(F_0,\ldots,F_n)} & C_\nu \\ \nonumber
	(G_0,\ldots,G_n) & \mapsto & \sum_{i=0}^n G_iF_i,
\end{eqnarray}
which correspond to the graded components of the presentation matrix of $I$. The columns of these matrices are filled with the coefficients of trivial inertia forms of degree $\nu$. When the coefficients of the $F_i$'s are specialized to a field $\KK$, which corresponds to a polynomial system over $\PP^n_\KK$ that we denote by $f_0=\cdots=f_n=0$, it turns out that for all $\nu>\delta$ the matrices \eqref{eq:FirstKoszulMap} are not surjective after specialization if and only if the $f_i$'s have a common root in $\PP^n_{\overline{\KK}}$, $\overline{\KK}$ denoting the algebraic closure of $\KK$. More importantly, if the number of common roots of the $f_i$'s is finite, then the corank of these matrices is precisely this number; we will quote this property as the \emph{drop-of-rank property}. This was first noticed and proved by Lazard in his foundational paper \cite{Laz}. He also showed that linear algebra techniques, such as Gaussian elimination, can be performed on the cokernel of these matrices to extract (approximate values of) the roots of the polynomial system considered. Another close approach relies on the computation of eigenvalues and eigenvectors from the cokernel of these matrices. It is based on the famous Eigenvalue Theorem that goes back to Stickelberger (see \cite{Cox20} for more details). In this paper we will focus on the construction of elimination matrices and we refer the readers to \cite{CLO98} and \cite{TelenThesis} for more details on the solving of polynomial systems from elimination matrices having the drop-of-rank property. We retain that from a computational efficiency perspective it is  useful to build as small as possible such matrices. Sylvester forms can be used for that purpose, by completing the matrices \eqref{eq:FirstKoszulMap} in degree $\nu$ such that $\delta-\min_i d_i\leq \nu<\delta$, so that the drop-of-rank property is preserved. We will review this construction and its related results with more details in Section \ref{sec:SGsection}. 

\medskip

The main objective of this paper is the extension of the above results from the single graded case to the multigraded case. Thus, instead of considering homogeneous polynomials in a single set of variables, we  consider multihomogeneous polynomials. In  geometric terms, such multihomogeneous equations define hypersurfaces in a product of projective spaces $\PP^{n_1}\times \ldots\times \PP^{n_r}$. We will consider multiprojective polynomial systems of $n+1$ hypersurfaces where $n=\sum_{i=1}^r n_i$. Let $F_0,\dots, F_n \in C $ be the $n+1$ generic multihomogeneous polynomials of degrees $\bd_0,\dots ,\bd_n$, where $C = A[\bx_1,\dots, \bx_r]$ with $A$ the universal ring of coefficients.    Such systems arise in many contexts and applications and there is a rich literature on their study. In particular, the theory of multiprojective resultant has been extensively covered, for instance by Gelfand, Kapranov and Zelevinsky in \cite{GKZ} from a geometric perspective, and by Rémond in \cite{Remond} and Jouanolou and his students in \cite{ChThesis88,ChaichaaThesis} from more algebraic and computational points of view. However, to the best of our knowledge, structural results on the corresponding saturated ideal are not available, as well as compact elimination matrices built from non trivial inertia forms and having the drop-of-rank property. The main contribution of this paper is to provide new results in this direction.

 In the multigraded setting, the saturation of ideals is done with respect to the product $\bb=\prod_{i=1}^r \mm_i$ where $\mm_i$ is the irrelevant ideal associated with the projective space $\PP^{n_i}$. Similarly to the  single graded case, one can define a critical degree $\bdelta:=\sum_{i=0}^n \bd_i- (n_1+1,\dots, n_r+1) $.
 In Section \ref{sec:duality}, we prove a duality property for the  ideal $I$ generated by $n+1$ generic multihomogeneous polynomials (Theorem \ref{thm: duality}) and derive some consequences:

\begin{headtheorem}
Let $F_0,\dots, F_n$ be the $n+1$ generic multihomogeneous polynomials. There exists $\Theta \subset \bdelta -\NN^r$, such that
$$
(I^{\sat}/I)_\bmu = \Hom_{A}((C/I)_{\bdelta-\bmu},A)
$$
for every $\bmu\in \Theta$.
\end{headtheorem}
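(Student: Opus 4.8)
The plan is to realize this as a statement about local cohomology and Koszul (or Čech) complexes, following the strategy that works in the single-graded case via Bezoutian/Morley forms. First I would observe that $(I^{\sat}/I)_\bmu = H^0_\bb(C/I)_\bmu$, the degree-$\bmu$ component of the $0$-th local cohomology of $C/I$ with support in $\bb = \prod_i \mm_i$. So the assertion is that for $\bmu$ in a suitable subset $\Theta$ of $\bdelta - \NN^r$, this local cohomology module is $A$-dual to $(C/I)_{\bdelta - \bmu}$. Since $F_0, \dots, F_n$ are generic of multidegrees $\bd_0, \dots, \bd_n$, they form a regular sequence in $C$, so the Koszul complex $K_\bullet(F_0, \dots, F_n; C)$ is a free resolution of $C/I$ by multigraded free $A[\bx_1,\dots,\bx_r]$-modules. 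The heart of the argument is then a local duality statement on the product of projective spaces $\PP := \PP^{n_1} \times \cdots \times \PP^{n_r}$ over $A$: the dualizing sheaf is $\omega_{\PP} = \mathcal{O}(-(n_1+1),\dots,-(n_r+1))$, whose global sections and higher cohomology are governed by the shift that produces exactly the $(n_1+1,\dots,n_r+1)$ appearing in $\bdelta$.

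The key steps, in order, are as follows. First I would set up the graded local cohomology functors $H^j_\bb(-)$ and record that, because $n+1$ generic polynomials of positive multidegrees cut out the empty set in $\PP$ (here one uses $n = \sum n_i$ and genericity), the quotient $C/I$ is supported on the "irrelevant locus," hence $H^j_\bb(C/I)_\bmu$ is a finitely generated $A$-module for each $\bmu$ and vanishes for $j$ outside a controlled range; in particular $H^0_\bb(C/I) = I^{\sat}/I$. Second, I would feed the Koszul resolution of $C/I$ into a hypercohomology spectral sequence computing $H^j_\bb(C/I)$ from the modules $H^j_\bb(C)(-\text{shifts})$; the latter are explicitly the (multigraded) local cohomology of the polynomial ring, and by the multigraded analogue of the Künneth/Bott formula they are free $A$-modules concentrated in top cohomological degree $j = n+r$ with a known graded piece structure, namely $H^{n+r}_\bb(C)$ is, up to the twist by $(-(n_1+1),\dots,-(n_r+1))$, the graded $A$-dual of $C$. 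Third, combining genericity (which forces the spectral sequence to degenerate in the relevant internal degrees, exactly on the set $\Theta$ where no unwanted $\mathrm{Ext}$ contributions survive) with the self-duality of the Koszul complex $K_\bullet \cong K_{n+1-\bullet}$ twisted by $\det = \bigoplus \bd_i$, one gets
$$
H^0_\bb(C/I)_\bmu \cong \Hom_A\big((C/I)_{\bdelta - \bmu}, A\big),
$$
where the shift $\bdelta = \sum_i \bd_i - (n_1+1,\dots,n_r+1)$ is precisely the internal degree shift coming from $\det K_\bullet$ composed with the dualizing twist. I would then define $\Theta$ explicitly as the set of $\bmu \in \bdelta - \NN^r$ for which all the intermediate local cohomology modules $H^j_\bb(C)$ appearing in the spectral sequence (for $0 < j < n+r$, coming from the interior Koszul terms) vanish in the degree $\bmu$ plus the appropriate twists — this is a purely combinatorial condition on $\bmu$ in terms of the $n_i$ and $\bd_i$.

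The main obstacle I anticipate is controlling the spectral sequence precisely enough to identify the exact region $\Theta$: in the single-graded case the analogous vanishing is clean because $H^j_\mm(C) = 0$ for all $0 < j < n+1$, so the Koszul-to-local-cohomology spectral sequence collapses immediately and one gets the duality for all $\nu > \delta - \min_i d_i$ essentially for free; but over a product of projective spaces the intermediate local cohomology modules $H^j_\bb(C)$ for $0 < j < n+r$ are generally \emph{nonzero}, so one cannot hope for duality in all multidegrees $\bmu \le \bdelta$ — this is exactly why the statement must be restricted to a subset $\Theta$ rather than a half-space. So the real work is (a) writing down $H^j_\bb(C)$ multigradedly via the Künneth formula, (b) identifying for which $\bmu$ these vanish after the Koszul twists so the spectral sequence still degenerates onto the top and bottom rows, and (c) checking that on that locus the surviving $E_\infty$ term is genuinely the claimed $A$-dual of $(C/I)_{\bdelta - \bmu}$ and not merely abstractly isomorphic — here the self-duality of the Koszul complex, already exploited in the single-graded Morley/Bezoutian approach, should do the job. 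A secondary technical point is checking that all the $A$-modules in sight are finitely generated (indeed, on $\Theta$, free) so that applying $\Hom_A(-,A)$ commutes with the relevant operations; this follows from genericity together with the finiteness of local cohomology in the supported situation, but it should be stated carefully before invoking it.
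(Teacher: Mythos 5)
Your strategy is in spirit the paper's own: analyze the two spectral sequences of the \v{C}ech--Koszul double complex $C^{\bullet}_{\bb}(K_{\bullet}(\bF,C))$, identify the top $\bb$-local cohomology of $C$ with a twisted graded $A$-dual via a K\"unneth/Mayer--Vietoris decomposition, and carve $\Theta$ out of $\bdelta - \NN^r$ by vanishing of the intermediate local cohomology contributions. But two of your factual assertions are wrong and would derail a naive execution. First, $F_0,\ldots,F_n$ do \emph{not} form a regular sequence in $C$ once $r>1$, so $K_{\bullet}(\bF,C)$ is \emph{not} a free resolution of $C/I$: each $F_i$, having positive degree in every group $\bx_j$, lies in every $\mm_j$, so $V(I)\supseteq V(\mm_j)$ for all $j$ and hence $\mathrm{ht}(I)\leq 1+\min_j n_j < n+1$. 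What does hold is regularity after inverting a monomial $x_{1,i_1}\cdots x_{r,i_r}$ (Lemma \ref{lem-reqular-sequence}), whose consequence is that $H_q(K_{\bullet})$ is $\bb$-torsion for $q\geq 1$ and so contributes only to the $p=0$ row of $E_2^{p,q}=H^p_{\bb}(H_q(K_{\bullet}))$. You must therefore treat the full double complex and track these nonzero Koszul homology terms explicitly, as the paper does in the proof of Theorem \ref{thm: duality}; pretending the Koszul complex resolves $C/I$ would lead you to the false conclusion that its hypercohomology is $H^{\bullet}_{\bb}(C/I)$.

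Second, $H^j_{\bb}(C)$ is not concentrated in a single top degree $j=n+r$. The top nonvanishing $\bb$-local cohomology of $C$ is at $j=n+1$, where $H^{n+1}_{\bb}(C)\cong H^{n+r}_{\mm}(C)$ by iterated Mayer--Vietoris; the index $n+r$ you write belongs to $\mm=\sum_j \mm_j$, not to $\bb=\prod_j \mm_j$. Moreover, for every proper nonempty $\alpha\subsetneq\{1,\ldots,r\}$ there is a nonzero summand $\check{C}_{\alpha}$ of $H^{n(\alpha)+1}_{\bb}(C)$ (Proposition \ref{prop: support cohomology}). You effectively correct yourself in your final paragraph when you note that the intermediate modules ``are generally nonzero,'' but that contradicts the earlier ``concentrated at $j=n+r$'' claim, and the two must be reconciled before $\Theta$ can be pinned down. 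The regions $\Gamma_0$ and $\Gamma_1$ of Corollary \ref{cor:Gammai} are exactly the Koszul-twisted supports of these intermediate modules, and the paper takes $\Theta$ inside the complement of $\Gamma_0\cup\Gamma_1$. The plan is the right one, but both the regular-sequence claim and the top-degree concentration claim are false as stated and require the paper's weaker but correct replacements before the spectral sequence comparison can be carried through.
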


\noindent The region $\Theta$ is explicitly described in Theorem \ref{thm: duality}. In particular, we recover the known fact that $(I^{\sat}/I)_{\bdelta}$ is a free $A$-module of rank $1$ (Corollary \ref{cor- main theorem}) and more generally we show that there is a region of degrees for which graded components of the quotient $I^{\sat}/I$ are free $A$-modules.
In Section \ref{sec:MultiGradedSylvesterForms} we provide some explicit basis for these graded components. The graded component of degree $\bdelta$ is generated by a multigraded twisted Jacobian; this result already appeared in the existing literature, for instance in \cite{CCD97} and \cite{CDS98} (in the more general context of toric geometry) and in \cite{ChThesis88}. After reviewing this construction in Section \ref{sec:JouanoloySylvForm}, we introduce multigraded Sylvester forms (Definition \ref{def:MGSylvForm}) and prove that they provide the expected basis for some multigraded components of $I^{\sat}/I$ (Theorem \ref{thm:main}):
\begin{headtheorem}
Let $F_0,\dots, F_n$ be the $n+1$ generic multihomogeneous polynomials of degrees $\bd_i:= (d_{i,1},\dots, d_{i,r})$ for $0\leq i\leq n$.  For every $\bmu:= 
(\mu_1,\dots, \mu_r)$ such that $0\leq \mu_j < \min_i d_{i,j}
$, the set of 
multigraded Sylvester forms yields an $A$-basis of the free 
$A$-module $(I^{\sat}/I)_{\bdelta- \bmu}$.
\end{headtheorem}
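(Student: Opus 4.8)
The plan is to combine the duality property from Theorem~\ref{thm: duality} (which for $\bmu = \bdelta - \bnu$ with $\bnu \in \NN^r$ and $0 \le \nu_j < \min_i d_{i,j}$ identifies $(I^{\sat}/I)_{\bdelta - \bnu}$ with $\Hom_A((C/I)_{\bnu}, A)$) with an explicit count and independence argument for the multigraded Sylvester forms. The key point is that, in the range $0 \le \mu_j < \min_i d_{i,j}$, the module $(C/I)_{\bmu}$ coincides with $C_{\bmu}$ itself: since every $F_i$ has degree $\bd_i$ with $d_{i,j} \ge \min_i d_{i,j} > \mu_j$ in at least the coordinates where $\mu_j$ is small, no nonzero multiple of any $F_i$ can land in degree $\bmu$, so $I_{\bmu} = 0$. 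Hence $(I^{\sat}/I)_{\bdelta - \bmu}$ is free of rank $\dim_A C_{\bmu} = \prod_{j=1}^r \binom{n_j + \mu_j}{n_j}$, and it suffices to exhibit that many multigraded Sylvester forms of degree $\bdelta - \bmu$ and show they are $A$-linearly independent modulo $I$ (equivalently, that they form a basis, since a generating set of the right cardinality in a free module of that rank is automatically a basis, or by a direct rank computation of the pairing).

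First I would recall from Section~\ref{sec:MultiGradedSylvesterForms} and Definition~\ref{def:MGSylvForm} the precise construction: a multigraded Sylvester form is built by writing suitable combinations $F_i = \sum_k a_{i,k} m_k$ where the $m_k$ range over a monomial basis of $C_{\bmu}$ (or an appropriate shift), and taking the determinant of the resulting coefficient matrix $(a_{i,k})$; this produces an element of $I^{\sat}$ of degree $\bdelta - \bmu$ whose class modulo $I$ is, up to sign and the choices involved, well defined. The combinatorics should show that the natural indexing set of these Sylvester forms — essentially the ways of distributing the $n+1$ polynomials against the monomial basis of $C_{\bmu}$ in each factor $\PP^{n_j}$ — has exactly $\prod_j \binom{n_j+\mu_j}{n_j}$ elements, matching the rank. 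I would then verify that each such form is a nontrivial inertia form (nonzero in $I^{\sat}/I$), which typically follows because its image under the duality pairing against the monomial basis of $C_{\bmu}$ is, up to sign, a standard basis vector, or because specializing to a transverse configuration makes the determinant visibly nonzero.

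The crucial step, and the one I expect to be the main obstacle, is proving that the pairing matrix — whose entries are the values of the duality functionals attached to the Sylvester forms, evaluated on the monomial basis of $(C/I)_{\bmu} = C_{\bmu}$ — is invertible over $A$. The natural strategy is to show this matrix is triangular (or block-triangular along the $r$ factors) with unit diagonal after ordering both the Sylvester forms and the monomials compatibly, say by a lexicographic or degree-lexicographic order adapted to the multigrading. This is exactly how the single-graded case is handled in \cite{J97}, and the multigraded case should reduce, factor by factor, to an iterated version of that argument; the technical work is in checking that the ``leading term'' of each Sylvester form pairs to $1$ with the corresponding monomial and to $0$ with all larger monomials, which requires a careful bookkeeping of the multidegrees $\bd_i$ and the chosen monomial supports in each $\PP^{n_j}$. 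Once triangularity is established, invertibility over $A$ is immediate, the pairing is a perfect duality on these components, and Theorem~\ref{thm: duality} upgrades this to the statement that the Sylvester forms form an $A$-basis of $(I^{\sat}/I)_{\bdelta - \bmu}$.
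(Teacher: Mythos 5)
Your approach matches the paper's: apply Theorem~\ref{thm: duality} to identify $(I^{\sat}/I)_{\bdelta-\bmu}$ with $\Hom_{A}(C_{\bmu},A)$, and then show that multiplication by the multigraded Sylvester forms realizes the dual of the monomial basis of $C_{\bmu}$. The step you flag as the main obstacle is precisely what Proposition~\ref{prop multiplication} establishes, and in a slightly stronger form than you anticipate: the pairing is not merely triangular after a suitable ordering but exactly diagonal, since $\bx_1^{\bbeta_1}\cdots\bx_r^{\bbeta_r}\,\sylv_{\balpha_1,\ldots,\balpha_r}$ equals $\sylv_{\b0,\ldots,\b0}$ when $(\bbeta_1,\ldots,\bbeta_r)=(\balpha_1,\ldots,\balpha_r)$ and vanishes otherwise.
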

\noindent From all this, in Section \ref{sec:MGelimMat} we construct new elimination matrices that are built using non trivial inertia forms and having the drop-of-rank property. We conclude with some illustrative examples, including some matrices due to Dixon \cite{Dixon} that we recover and that are associated with the resultant of three generic bihomogeneous polynomials of the same bidegree.

\section{Preliminaries: the single graded case}\label{sec:SGsection}

In this section we review some results on elimination matrices associated with zero-dimensional polynomial systems defined by $n+1$ equations over a single projective $\PP^n_\KK$, where $\KK$ is field. More precisely, we consider $n+1$ homogeneous polynomials $f_0,\ldots,f_n$ in the polynomial ring $R=\KK[x_0,\ldots,x_n]$ of positive degrees $d_0,\ldots,d_n$. We describe a family of matrices that only depend on the coefficients of the $f_i$'s and that can be used to solve this polynomial system by means of linear algebra techniques. 

Our strategy to study elimination ideals of homogeneous polynomial systems has two steps. We first  consider the generic setting, i.e.~the coefficients of the $f_i$'s are seen as variables. Subsequently,  we proceed by analyzing specializations of these coefficients to a field $\KK$. We will denote by $\bar{\KK}$ the algebraic closure of $\KK$. 

\subsection{Macaulay-type elimination matrices}\label{subsec:MacMat}  We consider the generic polynomial system of $n+1$ homogeneous polynomials in $n+1$ variables of positive degrees $d_0,d_1,\ldots,d_n$, over a commutative ring $k$. Thus, for all $i=0,\ldots,n$ we will denote by 
$$F_i(x_0,\ldots,x_n)=\sum_{|\balpha|=d_i} U_{i,\balpha} \bx^{\balpha}$$
the generic polynomial of degree $d_i\geq 1$, where $\balpha$ is a multi-index $(\alpha_0,\ldots,\alpha_n)\in \NN^{n+1}$, $\bx^\balpha$ is the monomial $x_0^{\alpha_0}\ldots x_n^{\alpha_n}$ and $|\balpha|:=\sum_{i=0}^n \alpha_i$. The universal ring of coefficients of $F_0,\ldots,F_n$ over $k$ is the ring
$$A_k:=k[U_{i,\balpha} : i=0,\ldots,n, \ |\balpha|=d_i]$$
and we set $C_k=A_k[x_0,\ldots,x_n]$. To not overload the notation, we will only display the base ring $k$ when it plays an  important role. 

The polynomial ring $C$ is canonically graded by setting $\deg(x_i)=1$.  We denote by $I$, respectively $\mm$, the homogeneous ideal of $C$ generated by $F_0,\ldots,F_n$, respectively $x_0,\ldots,x_n$. We also define the graded quotient ring $B=C/I$ and set $\delta:=\sum_{i=0}^n(d_i-1)$.

\medskip

From a geometric point of view, the ideal $I$ defines a subscheme in $\PP^n_A$ whose canonical projection on the affine (coefficient) space $\Spec(A)$ is defined by the elimination ideal $\Af=(I:\mm^\infty)\cap A$. Although this property is not needed for what follows, we notice that in our setting the ideal $\Af$ is a prime and principal ideal which is generated by the Macaulay resultant of $F_0,\ldots,F_n$ \cite{J91}. The following classical result is important as it allows to compute $\Af$ as the annihilator of some graded components of $B$ (see e.g.~\cite{J97}).

\begin{lem}\label{lem:SGsatindex} For any integer $\nu>\delta$, $\Af=\ann_A(B_\nu)$.	
\end{lem}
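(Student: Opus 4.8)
The plan is to show the two inclusions $\Af \subseteq \ann_A(B_\nu)$ and $\ann_A(B_\nu) \subseteq \Af$ separately, using that $I^{\sat}_\nu = I_\nu$ for $\nu > \delta$. First I would recall why this last equality holds: in the generic setting $F_0,\dots,F_n$ form a regular sequence in $C$ (the coefficients being independent variables makes the Koszul complex on the $F_i$ exact), so $C/I$ is a Cohen--Macaulay module of dimension $\dim A$, i.e.\ $x_0,\dots,x_n$ is a regular sequence on $C/I$ modulo the base, and hence $\mathrm{depth}_\mm(C/I) = n+1$. By the standard cohomological characterization of saturation, $(I^{\sat}/I)_\nu = H^0_\mm(C/I)_\nu$, and the vanishing of $H^0_\mm(C/I)$ (indeed it is zero because $\mathrm{depth}_\mm(C/I) \geq 1$) together with the regularity estimate gives $I^{\sat}_\nu = I_\nu$ for all $\nu > \delta$; this is the classical statement recalled in the introduction, which I may quote.

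For the inclusion $\Af \subseteq \ann_A(B_\nu)$: let $a \in \Af = I^{\sat} \cap A$. Then $a \in I^{\sat}$, so $a \cdot \mm^N \subseteq I$ for some $N$; equivalently $a \in (I : \mm^\infty)$. For any homogeneous $g \in C_\nu$, the element $a g$ lies in $I^{\sat}_\nu$ (since $a g \cdot \mm^N \subseteq \mm^N I \subseteq I$), and because $\nu > \delta$ we have $I^{\sat}_\nu = I_\nu$, so $a g \in I_\nu$. Thus $a$ kills $B_\nu = C_\nu / I_\nu$, i.e.\ $a \in \ann_A(B_\nu)$.

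For the reverse inclusion $\ann_A(B_\nu) \subseteq \Af$: let $a \in A$ with $a \cdot C_\nu \subseteq I_\nu$. I want to show $a \in (I : \mm^\infty)$. The key point is that $\mm^\nu C \subseteq \bigoplus_{k \geq \nu} C_k$, and for every $k \geq \nu > \delta$ we again have $I^{\sat}_k = I_k$. Take any homogeneous $h \in \mm^k$ with $k \geq \nu$; I need $a h \in I$. Since $a$ annihilates $B_\nu$, and multiplication by elements of $C_{k-\nu}$ maps $a \cdot C_\nu$ into $I$, one sees $a \cdot (\mm^{k-\nu} \cdot C_\nu) \subseteq I$; but $\mm^k \subseteq \mm^{k-\nu}\cdot C_\nu$ is not quite right unless $\nu$-th powers generate, so more carefully: any monomial of degree $k \geq \nu$ is a product of a monomial of degree $\nu$ and one of degree $k - \nu$, hence $C_k = C_{k-\nu}\cdot C_\nu$ and therefore $a \cdot C_k \subseteq C_{k-\nu}\cdot (a\, C_\nu) \subseteq C_{k-\nu}\cdot I_\nu \subseteq I$. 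So $a \cdot \mm^\nu \subseteq a \cdot \bigoplus_{k\geq \nu} C_k \subseteq I$, which gives $a \in (I:\mm^\infty) \cap A = \Af$.

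The main (and only real) obstacle is the input fact that $I^{\sat}_k = I_k$ for all $k > \delta$, i.e.\ that all inertia forms of degree exceeding $\delta$ are trivial; everything else is a formal manipulation of homogeneous components. Since this regularity statement is recalled explicitly in the introduction and is a standard consequence of $F_0,\dots,F_n$ being a generic regular sequence (so that $C/I$ is Cohen--Macaulay and its local cohomology $H^0_\mm$ vanishes while $H^{n+1}_\mm$ lives in degrees $\leq \delta$), I would simply invoke it with a one-line reminder rather than reprove it.
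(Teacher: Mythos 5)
Your two-inclusion argument is sound, and it supplies an actual proof where the paper gives none: Lemma~\ref{lem:SGsatindex} is stated without proof, with only a pointer to \cite{J97}. The forward inclusion $\Af \subseteq \ann_A(B_\nu)$ correctly reduces to the fact $I^{\sat}_\nu = I_\nu$ for $\nu > \delta$ via the observation that $a g \in I^{\sat}_\nu$ for $a \in \Af$ and $g \in C_\nu$. The reverse inclusion $\ann_A(B_\nu) \subseteq \Af$ is, as you note, a purely formal consequence of $C_k = C_{k-\nu}\cdot C_\nu$ for $k \geq \nu$, which gives $a\,\mm^\nu \subseteq I$; this direction in fact holds for every $\nu \geq 0$ and uses nothing about $\delta$.

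However, your parenthetical justification of the key input fact contains a genuine error. You assert that $H^0_\mm(C/I) = 0$ because $\mathrm{depth}_\mm(C/I) \geq 1$. This is false: $H^0_\mm(B) = I^{\sat}/I$ is precisely the module of non-trivial inertia forms, and it is certainly not zero --- Proposition~\ref{prop: SGduality}, and indeed the whole paper, is concerned with its structure, and $(I^{\sat}/I)_\delta$ is a free $A$-module of rank one generated by the Jacobian. Equivalently, $\mathrm{depth}_\mm(B)=0$, since each $F_i$ vanishes along $V(\mm)$ and so every element of $\mm$ is a zerodivisor on $B$. What your proof actually needs is the weaker, degreewise statement that $H^0_\mm(B)_\nu = 0$ for $\nu > \delta$, which comes from the \v{C}ech--Koszul spectral sequence (the support of $H^{n+1}_\mm$ of the tail $C(-\sum d_i)$ of the Koszul complex lies in degrees $\leq \delta$), not from a depth argument. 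Your instinct to simply quote the classical fact as recalled in the introduction is the right move; just delete the depth claim, which is wrong and would mislead a reader into thinking $I^{\sat}=I$.
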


By classical properties of Fitting ideals, both ideals $\Fitt^0_A(B_\nu)$ and $\ann_A(B_\nu)$ of $A$ have the same radical (for all $\nu\in \NN$) and hence Lemma \ref{lem:SGsatindex} suggests to consider presentation matrices of $A$-modules $B_\nu$. Consider the graded map
\begin{eqnarray*}
\Phi: \oplus_{i=0}^n C(-d_i) & \xrightarrow{ (F_0, \ldots, F_n)} & C \\
(G_0,\ldots,G_n) & \mapsto & G_0F_0+G_1F_1+\cdots+G_nF_n.	
\end{eqnarray*}
For any integer $\nu$ we denote by $\MM_\nu$ the matrix of the graded component map $\Phi_\nu$ of $\Phi$. It is a map of $A$-modules which provides a presentation of $B_\nu$. Thus, as a consequence of Lemma \ref{lem:SGsatindex}, the ideal generated by the maximal minors of $\MM_\nu$ has the same radical as $\Af$ for all $\nu>\delta$. 

\medskip

Now, let us consider a zero-dimensional polynomial system with coefficients in a field $\KK$: $f_0,\ldots,f_n$ are $n+1$ homogeneous polynomials of degrees $d_0,\ldots,d_n$ in $R=\KK[x_0,\ldots,x_n]$. This polynomial system can be seen as a specialization of the generic polynomial system $F_0,\ldots,F_n$ via a ring map $\rho:A_\ZZ \rightarrow \KK $. In other words,  setting $\bff:=\{f_0,\ldots,f_n\}$, $\bff$ can be seen as a point in $\mathrm{Spec}(A_\KK)$. Thus, we use the notation $I(\bff)$ for the ideal generated by $f_0,\ldots,f_n$ in $R$ and similarly we will use the notation $B(\bff)$ and $\MM_\nu(\bff)$.

The graded components of $B(\bff)$ are $\KK$-vector spaces whose dimensions are defining the Hilbert function of $B(\bff)$,  $\HF_{B(\bff)}(\nu):=\dim_\KK B(\bff)_\nu$. It is known since Hilbert that for sufficiently high values of $\nu$, the Hilbert function is a polynomial function which is called the Hilbert polynomial and denoted by $\HP_{B(\bff)}(\nu)$. In the case where $I(\bff)$ defines finitely many points in $\PP^n_\KK$, the Hilbert polynomial is a constant polynomial which is equal to the number of points defined by $I(\bff)$, counted with multiplicity. 
\begin{lem}\label{lem:SGHilbReg} For any integer $\nu>\delta$ we have $\HF_{B(\bff)}(\nu)=\HP_{B(\bff)}(\nu)$.
\end{lem}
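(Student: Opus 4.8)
The plan is to identify the quantity $\HF_{B(\bff)}(\nu) - \HP_{B(\bff)}(\nu)$ with (an alternating sum of) dimensions of local cohomology modules and to show these vanish in the range $\nu > \delta$. First I would recall that for a graded quotient $B(\bff) = R/I(\bff)$ with $R = \KK[x_0,\ldots,x_n]$ and $\mm = (x_0,\ldots,x_n)$, the Grothendieck--Serre formula gives
\[
\HF_{B(\bff)}(\nu) - \HP_{B(\bff)}(\nu) = \sum_{j\geq 0} (-1)^j \dim_\KK \left( H^j_\mm(B(\bff)) \right)_\nu .
\]
So it suffices to prove that $\left(H^j_\mm(B(\bff))\right)_\nu = 0$ for all $j\geq 0$ whenever $\nu > \delta$; equivalently, that the Castelnuovo--Mumford regularity bound of the relevant cohomology is at most $\delta$. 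For $j = 0$ this says $H^0_\mm(B(\bff))_\nu = (I(\bff)^{\sat}/I(\bff))_\nu = 0$, i.e.\ all inertia forms of degree $> \delta$ are trivial, which is exactly the classical statement recalled in the introduction (and which follows from the duality / from $I^{\sat}_\nu = I_\nu$ for $\nu > \delta$ in the generic case, then specialized).

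The cleanest route for the higher $H^j_\mm$ is via the Koszul complex. Since $f_0,\ldots,f_n$ are $n+1$ forms in the $n+1$ variables $x_0,\ldots,x_n$, the Koszul complex $K_\bullet(f_0,\ldots,f_n; R)$ has graded free terms $\bigoplus_{|S|=p} R(-\sum_{i\in S} d_i)$ in homological degree $p$, and its homology $H_p$ is annihilated by $I(\bff)$, hence is a module of finite length once $I(\bff)$ is $\mm$-primary — which is the zero-dimensional hypothesis. One then uses the two standard spectral sequences (or hypercohomology) computing $H^\bullet_\mm$ of this complex: on one side the $E_1$ page involves $H^j_\mm(R(-a))$, which by the computation of local cohomology of a polynomial ring is concentrated in cohomological degree $n+1$ and lives in degrees $\leq a - (n+1)$; since each twist $a$ is a sum of a subset of the $d_i$, the worst case is $a = \sum_{i=0}^n d_i$, giving the bound $\sum_i d_i - (n+1) = \delta$. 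Tracking this through the spectral sequence shows that the total complex has no cohomology in degrees $> \delta$, and on the other side that same cohomology is built from $H^j_\mm(H_p(K_\bullet))$; comparing, one concludes $H^j_\mm(B(\bff))_\nu = 0$ for $\nu > \delta$ (the finite-length homology modules $H_p$ for $p\geq 1$ contribute only negatively-twisted pieces that are killed in this range).

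An alternative, and perhaps faster to write, is to pass through the generic situation: in the generic case the complex of $A$-modules $\bigoplus_{i} C(-d_i)_\nu \to C_\nu$ is surjective for $\nu > \delta$ (equivalently $H^0_\mm(C/I)_\nu = 0$ and higher cohomology considerations as above), so $B_\nu$ is a free $A$-module for $\nu > \delta$ — this is essentially contained in the duality results cited — and freeness is preserved under the specialization $\rho$, so $\HF_{B(\bff)}(\nu) = \operatorname{rank}_A B_\nu = \HP_{B(\bff)}(\nu)$, the last equality because the generic Hilbert polynomial of a complete intersection of degrees $d_0,\ldots,d_n$ in $n+1$ variables takes the value $\operatorname{rank}_A B_\nu$ at every $\nu > \delta$ and is constant there. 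The main obstacle is the bookkeeping in the spectral-sequence argument — getting the regularity bound to come out exactly as $\delta$ rather than something weaker, and making sure the finite-length Koszul homology modules in positive homological degree do not contaminate degrees $> \delta$; this is where the zero-dimensionality of $\bff$ and the precise twists in the Koszul complex must be used carefully. If one instead uses the specialization argument, the obstacle shifts to justifying that $B_\nu$ is $A$-free for $\nu > \delta$ and that this is compatible with $\rho$, which is standard but should be stated cleanly.
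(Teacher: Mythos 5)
The paper gives no proof of this lemma at all; it only cites \cite{Laz} and \cite{J96}. So the comparison is really an assessment of whether your sketch would hold up.

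Your first route, via Grothendieck--Serre and the two \v{C}ech--Koszul spectral sequences, is the standard one and is, in outline, what the cited sources do. Two points should be tightened. First, you claim the Koszul homology modules $H_p(K_\bullet)$, $p\ge 1$, are of finite length ``once $I(\bff)$ is $\mm$-primary --- which is the zero-dimensional hypothesis.'' In this paper ``zero-dimensional'' means $V(I(\bff))$ is a finite subscheme of $\PP^n_{\bar\KK}$, which makes $V(I(\bff))\subset\Spec R$ a union of lines through the origin; the $H_p$ are then supported on a one-dimensional cone, so they are \emph{not} of finite length unless the projective zero locus is empty. What that hypothesis actually buys you is $\dim\Supp H_p\le 1$, hence $H^j_\mm(H_p)=0$ for $j\ge 2$ (and, by Koszul depth sensitivity, $H_p=0$ for $p\ge 2$); it is these vanishings, not finite length, that collapse the second spectral sequence. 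Second, ``comparing, one concludes $H^j_\mm(B(\bff))_\nu=0$ for $\nu>\delta$'' is not automatic: vanishing of the abutment controls only $E_\infty$, and a priori a nonzero $E_2$-term built from $H^q_\mm(H_0)$ could cancel against one built from $H^{q'}_\mm(H_p)$, $p\ge 1$. You need to see that the relevant differentials vanish (which the constraints above provide) before reading off the termwise vanishing from the abutment.

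Your alternative route is incorrect. You assert that, in the generic setting, $\bigoplus_i C(-d_i)_\nu\to C_\nu$ is surjective for $\nu>\delta$ and hence that $B_\nu$ is a free $A$-module whose rank survives specialization. Surjectivity of that map would mean $B_\nu=0$, which is false; the map becomes surjective only after inverting the resultant. For $\nu>\delta$ one has $B_\nu=B^{\sat}_\nu\neq 0$, a nonzero $A$-torsion module supported on $V(\Af)$, so it is not free (and cannot be: were $B_\nu$ free of rank $k$, then $\dim_\KK B(\bff)_\nu=k$ for \emph{every} specialization $\bff$, contradicting the fact that this dimension equals the number of common zeros of $\bff$ counted with multiplicity and therefore varies with $\bff$). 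The objects the paper's duality results show to be free in a suitable range are the graded pieces $(I^{\sat}/I)_\nu$, not the $B_\nu$.
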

\begin{proof} This is a classical result; see for instance \cite[Theorem 3.3]{Laz} and \cite[\S 1.5]{J96}.
\end{proof}

All the above considerations lead to the following result that we will refer to as the \emph{drop-of-rank property} in the rest of this paper.

\begin{prop}\label{prop:SGKoszulMat} Suppose that the polynomial system $\bff=\{f_0,\ldots, f_n\}$ defines a finite subscheme in $\PP^n_{\bar{\KK}}$ and let $\kappa$ be its degree. Then, for all $\nu > \delta$, the dimension of the cokernel of $\MM_\nu(\bff)$ is equal to $\kappa$.
\end{prop}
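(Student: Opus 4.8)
The plan is to relate the cokernel of $\MM_\nu(\bff)$ to the graded piece $B(\bff)_\nu$ and then use Lemma~\ref{lem:SGHilbReg} together with the hypothesis that $\bff$ defines a finite subscheme. First I would observe that, by construction, $\MM_\nu(\bff)$ is the matrix of the specialized map $\Phi_\nu(\bff)\colon \oplus_{i=0}^n R(-d_i)_\nu \to R_\nu$, whose image is exactly $I(\bff)_\nu$; hence its cokernel is $R_\nu / I(\bff)_\nu = B(\bff)_\nu$ as a $\KK$-vector space, and so $\dim_\KK \Coker \MM_\nu(\bff) = \HF_{B(\bff)}(\nu)$. This reduces the statement to computing this Hilbert function value for $\nu > \delta$.

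Next, by Lemma~\ref{lem:SGHilbReg}, for every $\nu > \delta$ we have $\HF_{B(\bff)}(\nu) = \HP_{B(\bff)}(\nu)$. Since $\bff$ defines a finite (zero-dimensional) subscheme $Z \subset \PP^n_{\bar\KK}$ of degree $\kappa$, the projective dimension of $Z$ is $0$, so the Hilbert polynomial of $B(\bff)$ is the constant polynomial equal to $\deg Z = \kappa$; this uses that the Hilbert polynomial is insensitive to base change to $\bar\KK$ (flatness of field extension preserves graded dimensions) and that for a zero-dimensional scheme the Hilbert polynomial equals its length/degree. Combining, $\dim_\KK \Coker \MM_\nu(\bff) = \HF_{B(\bff)}(\nu) = \kappa$ for all $\nu > \delta$, which is the assertion.

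I expect the only genuine subtlety to be the careful handling of the dimension statement $\HP_{B(\bff)}(\nu) = \kappa$: one must check that the constancy of the Hilbert polynomial and its equality with the degree of the subscheme hold over the possibly non-algebraically-closed field $\KK$, not merely over $\bar\KK$. This is handled by the standard fact that $\dim_\KK B(\bff)_\nu = \dim_{\bar\KK} (B(\bff)\otimes_\KK \bar\KK)_\nu = \dim_{\bar\KK} B(\bff\otimes\bar\KK)_\nu$, so the Hilbert function — and therefore the Hilbert polynomial — is unchanged by the field extension, and over $\bar\KK$ the classical theory identifies the Hilbert polynomial of a finite scheme of degree $\kappa$ with the constant $\kappa$. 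Everything else is a direct unwinding of definitions, so the proof is short once these ingredients are in place.
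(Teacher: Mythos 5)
Your proof is correct and follows essentially the same route as the paper: identify the cokernel of $\MM_\nu(\bff)$ with $B(\bff)_\nu$, then invoke Lemma~\ref{lem:SGHilbReg} to equate $\HF_{B(\bff)}(\nu)$ with the (constant) Hilbert polynomial, which equals the degree $\kappa$. The paper compresses the first step to ``right exactness of the tensor product''; you have simply unpacked it, and you also spell out the base-change-to-$\bar\KK$ justification that the paper leaves implicit in the discussion preceding Lemma~\ref{lem:SGHilbReg}.
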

\begin{proof} This is a immediate consequence of the right exactness of the tensor product and Lemma \ref{lem:SGHilbReg}.
\end{proof}

Recall that the degree of a finite subscheme is the sum of the length of the local rings of the points. In particular, it is equal to zero if the subscheme is empty.

This proposition is the key property to use the matrix $\MM_\nu(\bff)$ to solve the polynomial system $\bff=0$ by means of linear algebra techniques, in particular singular value decompositions and eigenvalue computations. We notice that in general the matrices  $\MM_\nu(\bff)$ are not square, except for rare exceptions, as the following one. 

\begin{exmp}
If $n=1$, then the matrix of $\MM_{\delta+1}(\bff)$ is nothing but the classical Sylvester matrix of the two polynomials $f_0$ and $f_1$. Thus, Proposition \ref{prop:SGKoszulMat} is the well known property that the corank of $\MM_{\delta+1}(\bff)$ is equal to the degree of the greatest common divisor of $f_0$ and $f_1$. 
\end{exmp}

\subsection{Saturation and duality} \label{section Sat and duality}
To construct smaller matrices having the drop-of-rank property, one possibility is to add new equations to our polynomial system. As we do not want to change its geometry, we consider the ideal $I^{\sat}$ which is obtained by saturation of the ideal $I$ with respect to the irrelevant ideal $\mm$ in $C$, i.e.
$$ I^{\sat}=(I:\mm^{\infty})=\{p \in C : \exists k\in \ZZ \  \mm^k p \subset I \} \subset C.$$ 
We set $B^{\sat}=C/I^{\sat}$ and we adopt the following notation for specialized polynomial systems over a field $\KK$. Let $\bff$ be a polynomial system with coefficients in $\KK$, we denote by $B(\bff)^{\sat}$ the quotient ring $R/I(\bff)^{\sat}$, where the saturation is taken after specialization, and by $B^{\sat}(\bff)$ the quotient ring $R/I^{\sat}(\bff)$, where the saturation is taken in the generic setting and then specialized. The following result is an improvement of Lemma \ref{lem:SGHilbReg}. 
 
\begin{lem}\label{lem:SGregIsat} Assume that $I(\bff)$ defines finitely many points in $\PP^n_{\bar{k}}$, say $\kappa \in \NN$,  then for any integer $\nu>\delta-\min_i d_i$, $\HF_{B(\bff)^{\sat}}(\nu)=\HF_{B^{\sat}(\bff)}(\nu)=\HP_{B(\bff)}(\nu)=\kappa$.
\end{lem}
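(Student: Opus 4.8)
The equality $\HP_{B(\bff)}(\nu)=\kappa$ is immediate: $I(\bff)$ cuts out a zero-dimensional subscheme of $\PP^n_{\bar\KK}$ of degree $\kappa$, so its Hilbert polynomial is the constant $\kappa$. For the two Hilbert-function equalities I would start from the containment $I^{\sat}(\bff)\subseteq I(\bff)^{\sat}$ (valid since $\mm^kI^{\sat}\subseteq I$ specializes to $\mm^kI^{\sat}(\bff)\subseteq I(\bff)$), which yields a surjection $B^{\sat}(\bff)\twoheadrightarrow B(\bff)^{\sat}$ and hence $\HF_{B^{\sat}(\bff)}(\nu)\ge\HF_{B(\bff)^{\sat}}(\nu)$ for all $\nu$. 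As $I(\bff)^{\sat}$ is saturated and $R/I(\bff)^{\sat}$ has dimension at most $1$, one also has $\HF_{B(\bff)^{\sat}}(\nu)\le\HP_{B(\bff)}(\nu)=\kappa$ for all $\nu$. So the whole statement reduces to the two inequalities $\HF_{B(\bff)^{\sat}}(\nu)\ge\kappa$ and $\HF_{B^{\sat}(\bff)}(\nu)\le\kappa$ for $\nu>\delta-\min_i d_i$.

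For the first, since $H^0_\mm(R/I(\bff)^{\sat})=0$ and $H^j_\mm(R/I(\bff)^{\sat})=0$ for $j\ge2$, one has $\HF_{B(\bff)^{\sat}}(\nu)-\kappa=-\dim_\KK H^1_\mm(R/I(\bff)^{\sat})_\nu$, so it suffices to prove $\mathrm{reg}\big(R/I(\bff)^{\sat}\big)\le\delta-\min_i d_i+1$. I would argue by a general hyperplane section: enlarging $\KK$ if necessary (which changes neither $\kappa$ nor the Hilbert functions), choose a linear form $\ell$ avoiding the finitely many points of $Z=V(I(\bff))$. Then $\ell$ is a nonzerodivisor on $R/I(\bff)^{\sat}$, so $\mathrm{reg}(R/I(\bff)^{\sat})=\mathrm{reg}(R/(I(\bff)^{\sat}+\ell))$, which is the top nonzero degree of this Artinian algebra. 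As $R/(I(\bff)^{\sat}+\ell)$ is a quotient of $R/(I(\bff)+\ell)$, it is enough to bound the top degree of the latter. Ordering $d_0\le\cdots\le d_n$, pick $n$ general forms $g_1,\dots,g_n\in I(\bff)$ of degrees $d_1,\dots,d_n$. Their images in $R/(\ell)$ form a regular sequence — a general-position count shows $V(\bar g_1,\dots,\bar g_n)=\emptyset$ in $\PP^{n-1}$, since the base locus of the $j$-th linear system is cut out by the $\bar f_i$ with $d_i\le d_j$, of which there are at least $j$ — so $R/(g_1,\dots,g_n,\ell)$ is a graded Artinian complete intersection of type $(d_1,\dots,d_n)$, whose top degree is $\sum_{i=1}^n(d_i-1)=\delta-\min_i d_i+1$. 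Since $(g_1,\dots,g_n)\subseteq I(\bff)$, the algebra $R/(I(\bff)+\ell)$ is a quotient of it and has top degree $\le\delta-\min_i d_i+1$. This gives the desired regularity bound, hence $\HF_{B(\bff)^{\sat}}(\nu)=\kappa$ for all $\nu>\delta-\min_i d_i$.

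The remaining inequality $\HF_{B^{\sat}(\bff)}(\nu)\le\kappa$ is, via the exact sequence $0\to I(\bff)^{\sat}/I^{\sat}(\bff)\to B^{\sat}(\bff)\to B(\bff)^{\sat}\to0$ together with the previous step, equivalent to the equality of ideals $I^{\sat}(\bff)_\nu=I(\bff)^{\sat}_\nu$ in the range $\nu>\delta-\min_i d_i$. Here I would pass to the generic setting. The single-graded duality of \cite{J91,J97} identifies $(I^{\sat}/I)_\mu\cong\Hom_A((C/I)_{\delta-\mu},A)$, which for $\mu>\delta-\min_i d_i$ becomes $\Hom_A(C_{\delta-\mu},A)$ and is thus a \emph{free} $A$-module. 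Consequently $B^{\sat}_\nu=C_\nu/I^{\sat}_\nu$ is the cokernel of an explicit map of finite free $A$-modules whose columns are the $F_i$-multiplications together with the Sylvester/Jacobian forms, and this map is generically of maximal rank; hence $\HF_{B^{\sat}(\bff)}(\nu)$ is exactly the corank of its specialization at $\bff$. That this corank equals $\kappa$ is Proposition~\ref{prop:SGKoszulMat} when $\nu>\delta$ (where moreover $I^{\sat}_\nu=I_\nu$ by \cite{Laz}); for $\delta-\min_i d_i<\nu\le\delta$ it is the drop-of-rank property of elimination matrices completed with Sylvester forms, which I would import from the single-graded Sylvester-form theory reviewed in Section~\ref{sec:SGsection}. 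Pinning down this last point — that the specialized Sylvester forms together with $I(\bff)$ still span all inertia forms of degree $\nu$, so that no extra elements appear in the generic saturation after specialization — is the main obstacle; everything else is a standard regularity computation and formal bookkeeping.
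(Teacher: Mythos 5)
Your handling of $\HF_{B(\bff)^{\sat}}$ is valid and takes a genuinely different route from the paper's. After the Grothendieck--Serre reduction, you bound $\mathrm{reg}\big(R/I(\bff)^{\sat}\big)$ by a general hyperplane section and an Artinian complete-intersection degree count, which is classical and goes back to Lazard. The paper instead reads the vanishing of $H^1_\mm(B(\bff)^{\sat})_\nu$ for $\nu>\delta-\min_i d_i$ directly from the two \v{C}ech--Koszul spectral sequences of $I(\bff)$, citing \cite[\S 2.11]{J80}. Both work; your version is more self-contained, though the ``general position count'' deserves care --- you should pass to $\bar\KK$ (or a transcendental extension) to guarantee the general forms exist, and the cleanest justification for extracting a regular sequence of $n$ forms of degrees $d_1,\ldots,d_n$ from $I(\bff)$ is Lazard's prime-avoidance lemma rather than the $j$-th-linear-system heuristic you sketch, which as stated is not airtight.

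Your second half, however, is circular. You correctly reduce to $I^{\sat}(\bff)_\nu = I(\bff)^{\sat}_\nu$ for $\nu>\delta-\min_i d_i$, but then want to close by invoking ``the drop-of-rank property of elimination matrices completed with Sylvester forms.'' In this paper that is Proposition \ref{prop:SGHybMat}, whose proof cites the very lemma you are trying to prove. The ``main obstacle'' you flag is in fact this circularity, not the spanning claim for Sylvester forms (that is Proposition \ref{prop:SGdualitySF} in the generic ring, and it is fine). Note also that generic freeness of $(I^{\sat}/I)_\nu$ plus semicontinuity of corank only shows $\HF_{B^{\sat}(\bff)}(\nu)$ can \emph{jump up} under specialization --- the wrong direction. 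The ingredient the paper actually uses to rule this out is a base-change result for local cohomology of fibers of projective morphisms, namely \cite[Proposition 6.3]{Ch13} (a consequence of \cite[Lemma 6.2]{Ch13}): the vanishing of $H^0_\mm(B^{\sat}(\bff))_\nu$ and $H^1_\mm(B^{\sat}(\bff))_\nu$ on the specialized fiber is controlled by the corresponding vanishing on the generic fiber $B^{\sat}$, and the latter holds for $\nu>\delta-\min_i d_i$ by the same \v{C}ech--Koszul analysis. Without an input of this type your argument does not close.
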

\begin{proof} A first observation is that the three ideals $I(\bff)\subset I^{\sat}(\bff) \subset I(\bff)^{\sat}$ of $R$ have the same saturation so $B(\bff)^{\sat}$, $B^{\sat}(\bff)$ and $B(\bff)$ have the same Hilbert polynomial.
	
By the Grothendieck-Serre formula \cite[Theorem 4.3.5]{BrHe93}, since $H^i_\mm(B(\bff)^{\sat})=0$ for all $i>1$ by our assumption, we have  $\HF_{B(\bff)^{\sat}}(\nu)=\HP_{B(\bff)^{\sat}}(\nu)$ for all $\nu$ such that $H^1_\mm(B(\bff)^{\sat})_\nu=0$ (observe that $H^0_\mm(B(\bff)^{\sat})=0$). The fact that this latter condition is satisfied for all $\nu>\delta-\min_i d_i$ follows classically from the analysis of the two \v{C}ech-Koszul spectral sequences associated with $I(\bff)$; see for instance \cite[\S 2.11]{J80}. 

Similarly, we have $\HF_{B^{\sat}(\bff)}(\nu)=\HP_{B^{\sat}(\bff)}(\nu)$ for all $\nu$ such that $H^0_\mm(B^{\sat}(\bff))_\nu=0$ and $H^1_\mm(B^{\sat}(\bff))_\nu=0$. The vanishing of these two local cohomology modules can be controlled as fibers of projective morphisms. More precisely, by \cite[Proposition 6.3]{Ch13} we deduce that $H^0_\mm(B^{\sat}(\bff))_\nu=0$ and $H^1_\mm(B^{\sat}(\bff))_\nu=0$ for all $\nu$ such that $H^0_\mm(B^{\sat})_\nu=0$ and $H^1_\mm(B^{\sat})_\nu=0$. But $H^0_\mm(B^{\sat})=0$ and $H^1_\mm(B^{\sat})_\nu=0$ for all $\nu>\delta-\min_i d_i$ by the analysis of the two \v{C}ech-Koszul spectral sequences associated with $I$, as we already mentioned, which concludes the proof.
\end{proof}

\begin{rem} As a consequence of Lemma \ref{lem:SGregIsat}, we notice that the canonical map from $I^{\sat}_\nu$ to $I(\bff)^{\sat}_\nu$, which is induced by the specialization $C\rightarrow R$ sending $F_i$ to $f_i$, is surjective for all $\nu > \delta-\min_i d_i$. 	
\end{rem}

To take advantage of Lemma \ref{lem:SGregIsat}, we need to understand the graded components of ${B^{\sat}}$, equivalently ${I^{\sat}}$, for all degree $\nu>\delta-\min_i d_i$. Since $I\subset I^{\sat}$, it is sufficient to analyze the quotient $I^{\sat}/I$. The following classical duality property is a key point \cite{J96,SS01}.

\begin{prop}\label{prop: SGduality} For any integer $\nu>\delta$, $\left(I^{\sat}/I\right)_\nu=0$. In addition, for any integer $0\leq \nu \leq \delta$ there is a duality of $A_k$-modules
	$$\Hom_{A_k}(B_\nu,A_k) \xrightarrow{\sim} \left(I^{\sat}/I\right)_{\delta-\nu}.$$	
\end{prop}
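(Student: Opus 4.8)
The plan is to deduce this classical duality from the structure of the Koszul complex on $F_0,\dots,F_n$ over the ring $C$, exploiting that these generic polynomials form a regular sequence. Write $K_\bullet$ for the Koszul complex $K_\bullet(F_0,\dots,F_n;C)$. Since $F_0,\dots,F_n$ is a regular sequence in $C$ (the $F_i$ have generic coefficients), $K_\bullet$ is acyclic, resolving $B=C/I$. The top of the complex is $K_{n+1}=C(-\delta-n-1)$, because $\sum_i d_i = \delta + n + 1$, and the self-duality of the Koszul complex gives $K_\bullet \cong \Hom_C(K_\bullet, C)(-\delta-n-1)[n+1]$. First I would feed this into the two \v{C}ech–Koszul spectral sequences computing $H^\bullet_\mm(B)$: as recalled in the proof of Lemma~\ref{lem:SGregIsat}, the only nonvanishing local cohomology of $B$ are $H^0_\mm(B)=I^{\sat}/I$ and $H^{n+1}_\mm(B)$, with $H^1_\mm(B)_\nu=0$ for $\nu>\delta-\min_i d_i$ in particular for $\nu\geq\delta$; the first statement $\left(I^{\sat}/I\right)_\nu=0$ for $\nu>\delta$ is then exactly the vanishing $H^0_\mm(B)_\nu=0$, which comes out of that spectral sequence analysis (or directly: all inertia forms of degree $>\delta$ are trivial).

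For the duality isomorphism, the cleanest route is graded local duality over $C$. Set $S = A_k[x_0,\dots,x_n]$ with $\mm=(x_0,\dots,x_n)$; the graded canonical module of $S$ over $A_k$ is $\omega_S = S(-n-1)$ and graded local duality gives, for a finitely generated graded $S$-module $M$, a functorial isomorphism $H^0_\mm(M)_\nu \cong \Hom_{A_k}\!\big(\operatorname{Ext}^{n+1}_S(M,\omega_S)_{-\nu},A_k\big)$, provided the relevant $A_k$-modules are nice enough (flat/free), which holds here because everything in sight is $A_k$-free by genericity. Now apply this with $M=B$: since $F_0,\dots,F_n$ is a regular sequence, $\operatorname{Ext}^{j}_S(B,\omega_S)=0$ for $j\neq n+1$ and $\operatorname{Ext}^{n+1}_S(B,\omega_S)\cong B(-\delta-n-1)(n+1) = B(-\delta)$ — this is the standard computation of Ext against a Koszul resolution, the nonzero cohomology sitting at the top where the twist is $\delta+n+1$. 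Combining, $H^0_\mm(B)_\nu \cong \Hom_{A_k}(B_{\delta-\nu},A_k)$, and since $H^0_\mm(B)=I^{\sat}/I$ this is precisely $\left(I^{\sat}/I\right)_\nu\cong \Hom_{A_k}(B_{\delta-\nu},A_k)$. Re-indexing $\nu \mapsto \delta-\nu$ (legitimate for $0\le\nu\le\delta$) yields the stated form $\Hom_{A_k}(B_\nu,A_k)\xrightarrow{\sim}\left(I^{\sat}/I\right)_{\delta-\nu}$.

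An alternative, more elementary and more self-contained argument avoids citing graded local duality and instead argues directly with the Koszul complex: truncate $K_\bullet$, use that $\ker(\Phi_\nu)=Z_\nu$ is the module of Koszul $1$-cycles, and identify $I^{\sat}/I$ in each degree with a cokernel of a dual Koszul differential via the self-duality $K_j^\vee(-\delta-n-1)\cong K_{n+1-j}$; pairing $K_1$-level data against $K_n$-level data produces the perfect $A_k$-bilinear pairing between $B_\nu$ and $(I^{\sat}/I)_{\delta-\nu}$. This is essentially how it is done in \cite{J96,SS01} in the anisotropic setting, and it has the advantage of making the pairing explicit (which is what later sections need when introducing Sylvester forms). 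I would probably present the local-duality proof for brevity and remark that it coincides with the explicit Bezoutian/Morley-form pairing.

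\textbf{Main obstacle.} The genuine subtlety is bookkeeping the $A_k$-module structure and flatness: graded local duality in its usual form is stated over a Gorenstein base or with a dualizing complex, and here the "base" direction is $\Spec A_k$. One must check that $B_\nu$ is a finitely generated (indeed free, by genericity) $A_k$-module for each $\nu$, that formation of $H^0_\mm$ and of the $\Hom_{A_k}(-,A_k)$ commute appropriately, and that no $\operatorname{Tor}$ obstruction appears when specializing $k$; all of this works because the Koszul complex is a complex of free $A_k$-modules and $F_0,\dots,F_n$ remains a regular sequence, but it is the step that requires care rather than the homological skeleton, which is routine once set up.
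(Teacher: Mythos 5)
Your argument is correct and matches what the paper itself does: Proposition~\ref{prop: SGduality} is quoted from \cite{J96,SS01} without proof, and the paper's own proof of the multigraded generalization (Theorem~\ref{thm: duality}) is exactly the two-\v{C}ech--Koszul-spectral-sequence comparison you describe, so your derivation is precisely the $r=1$ specialization of the paper's method. Two small slips, neither affecting the conclusion: the correct twist is $\operatorname{Ext}^{n+1}_C(B,\omega_C)\cong B(\delta)$, not $B(-\delta)$ (your final formula $H^0_\mm(B)_\nu\cong\Hom_{A_k}(B_{\delta-\nu},A_k)$ is right, so this is a typo in the intermediate step); and the assertion that $H^i_\mm(B)=0$ for $0<i<n+1$ in the generic setting is neither established by Lemma~\ref{lem:SGregIsat} (which treats the specialization $B(\bff)^{\sat}$ under a finite-fiber hypothesis) nor needed --- the spectral-sequence comparison uses only acyclicity of $K_\bullet(\bF,C)$ together with $H^p_\mm(C)=0$ for $p\neq n+1$, after which $H^0_\mm(B)_\nu\cong\ker\bigl(\Hom_{A_k}(C_{\delta-\nu},A_k)\to\oplus_i\Hom_{A_k}(C_{\delta-\nu-d_i},A_k)\bigr)\cong\Hom_{A_k}(B_{\delta-\nu},A_k)$ is immediate from left-exactness of $\Hom_{A_k}(-,A_k)$.
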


The duality maps in this proposition can described in terms of Morley forms, as proved by Jouanolou in \cite[\S 3.11]{J97}. In particular, for all $0\leq \nu < \min_i d_i$ the graded components $\left(I^{\sat}/I\right)_{\delta-\nu}$ are isomorphic to $C_\nu$ and $A$-bases of these graded components are provided by Sylvester forms.   

\subsection{Sylvester forms}\label{subsec:SGSylvForm}

We suppose given a multi-index $\bgamma:=(\gamma_0,\ldots,\gamma_{n})$, $\gamma_i \in \mathbb{N}$ such that $|\bgamma|=\sum_{i=0}^n \gamma_i <\min_j d_{j}$. Under this assumption, one can decompose any polynomial $F_{i}$ under the form
\begin{equation}\label{eq:choicedecompfi}
F_{i}=x_{0}^{\gamma_0+1} F_{i,0}+x_{1}^{\gamma_1+1} F_{i,1}+\cdots +x_{n}^{\gamma_{n}+1} F_{i,n},
\end{equation}
where the polynomials $F_{i,k}$ are homogeneous of degree $d_{i}-\gamma_k-1$ in $C$. Following \cite[\S 3.10]{J97}, we define the Sylvester form of the polynomials $F_{0},\ldots,F_{n}$ in degree $\bgamma$ as the determinant
$$\Sylv_\bgamma:=
\det\left( 
\begin{array}{ccc}
F_{0,0} & \cdots & F_{n,0} \\
\vdots & & \vdots \\
F_{0,n} & \cdots & F_{n,n}
\end{array}
\right).
$$
By construction, the Sylvester form $\Sylv_\bgamma$ belongs to the ideal $I^{\sat}$ and is of degree $\delta-|\bgamma|$. It depends on the choice of decompositions \eqref{eq:choicedecompfi}, but its class in $I^{\sat}/I=H^0_\mm(B)$, which we denote by $\sylv_\bgamma$, does not depend on these choices; see \cite[\S 3.10.1]{J97}. We have the following property which is of particular interest for our purposes.
\begin{prop}\label{prop:SGdualitySF}
For any multi-indexes $\bgamma,\bgamma'$ such that $|\bgamma|=|\bgamma'|<\min_i d_{i}$ we have
\begin{equation}\label{equation-single}
 \bx^{\bgamma'}\sylv_{\bgamma}= 
\begin{cases}
	\sylv_{(0,\ldots,0)} & \mathrm{\  if \ } \bgamma=\bgamma' \\
	0 & \mathrm{\  otherwise. \ }
\end{cases}
\end{equation}
In addition, the set of Sylvester forms $\{\sylv_\bgamma\ : |\bgamma|=\delta-\nu\}$ yields an $A_k$-basis of the graded component $\left(I^{\sat}/I\right)_{\nu}$ for all $\nu>\delta-\min_i d_i$.
\end{prop}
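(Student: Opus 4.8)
The plan is to prove the two assertions of Proposition \ref{prop:SGdualitySF} by first establishing the multiplication identity \eqref{equation-single} directly from the determinantal definition of Sylvester forms, and then deducing the basis statement from the duality of Proposition \ref{prop: SGduality}.

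For the identity, I would proceed as follows. Fix $\bgamma, \bgamma'$ with $|\bgamma| = |\bgamma'| < \min_i d_i$, and write out the decompositions \eqref{eq:choicedecompfi} of each $F_i$ relative to $\bgamma$, giving a matrix $M = (F_{i,k})$ whose determinant is $\Sylv_\bgamma$. The key observation is that the column vector of relations $\sum_k x_k^{\gamma_k+1} F_{i,k} = F_i$ says that, modulo $I$, the matrix $M$ annihilates the column $(x_0^{\gamma_0+1}, \ldots, x_n^{\gamma_n+1})^t$ up to the $F_i$'s. Multiplying $\Sylv_\bgamma$ by a monomial $\bx^{\bgamma'}$ and using the cofactor/adjugate expansion of the determinant, one rewrites $\bx^{\bgamma'}\Sylv_\bgamma$ as a combination of the entries $x_k^{\gamma_k+1} F_{i,k}$ — hence of the $F_i$ — plus, when $\bgamma = \bgamma'$, a term that is precisely $\Sylv_{\b0}$ modulo $I$. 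This is essentially the classical computation with generalized Jacobians (one can also cite \cite[\S 3.10]{J97}); the only subtlety is bookkeeping with the shift in exponents, so I would set it up carefully but not belabor it. The well-definedness of $\sylv_\bgamma$ in $I^{\sat}/I$ independently of the decomposition (already quoted from \cite{J97}) guarantees the identity holds at the level of classes.

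For the basis statement, I would argue by combining Proposition \ref{prop: SGduality} with the identity just proved. By Proposition \ref{prop: SGduality}, for $0 \le \nu' < \min_i d_i$ the component $(I^{\sat}/I)_{\delta - \nu'}$ is isomorphic, via the Morley-form duality map, to $\Hom_{A_k}(B_{\nu'}, A_k)$, which is free of rank $\dim_k$ equal to the number of monomials $\bx^{\bgamma}$ with $|\bgamma| = \nu'$ (since in that degree range $B_{\nu'} = C_{\nu'}$, as $I$ has no elements of degree below $\min_i d_i$). So it suffices to show the set $\{\sylv_\bgamma : |\bgamma| = \nu'\}$, which has the right cardinality and lies in $(I^{\sat}/I)_{\delta - \nu'}$, is $A_k$-linearly independent and spans. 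Independence follows immediately from \eqref{equation-single}: if $\sum_\bgamma a_\bgamma \sylv_\bgamma = 0$ with $a_\bgamma \in A_k$, multiplying by $\bx^{\bgamma'}$ and applying \eqref{equation-single} kills all terms except $a_{\bgamma'}\sylv_{\b0}$, and $\sylv_{\b0} \ne 0$ in the free module $(I^{\sat}/I)_\delta$ (it is the twisted Jacobian, a generator by Proposition \ref{prop: SGduality} with $\nu = 0$), forcing $a_{\bgamma'} = 0$. Since the module is free of rank equal to the number of the $\sylv_\bgamma$ and they are independent, they form a basis.

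The main obstacle is the first part: carrying out the adjugate-expansion computation cleanly so that $\bx^{\bgamma'}\Sylv_\bgamma$ genuinely reduces modulo $I$ to $\Sylv_{\b0}$ (when $\bgamma = \bgamma'$) or to $0$ (otherwise), rather than to some other Sylvester-type form. The point to be careful about is that $\bx^{\bgamma'}$ must be distributed across the rows/columns of the determinant so that each $x_k^{\gamma'_k}$ combines with an $x_k^{\gamma_k+1}$ appearing implicitly in the relation, and one must check the two cases $\bgamma = \bgamma'$ and $\bgamma \ne \bgamma'$ separately — in the unequal case some exponent $\gamma'_k$ exceeds $\gamma_k$, which is what produces a column in the span of the $F_i$'s and hence a vanishing class. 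Everything downstream (independence, rank count, spanning) is then formal given Proposition \ref{prop: SGduality}.
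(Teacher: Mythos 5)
Your proof of the multiplication identity \eqref{equation-single} is on the right track: the vanishing when $\bgamma\neq\bgamma'$ follows from column operations showing $x_k^{\gamma_k+1}\Sylv_\bgamma\in I$ for each $k$, and the case $\bgamma=\bgamma'$ comes from observing that $x_k^{\gamma_k}F_{i,k}$ gives a valid decomposition for the index $\b0$, so $\bx^\bgamma\Sylv_\bgamma$ is literally one of the permissible representatives of $\Sylv_{\b0}$; this matches what the paper does in the multigraded Theorem~\ref{prop multiplication}, and ``cofactor/adjugate expansion'' is a misnomer but the mechanism you describe is the right one. The genuine gap is in the basis statement, and it is two-fold. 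First, you assert that $\sylv_{\b0}$ is a \emph{generator} of $(I^{\sat}/I)_\delta$ ``by Proposition~\ref{prop: SGduality} with $\nu=0$,'' but Proposition~\ref{prop: SGduality} only tells you the module is free of rank one; it does not identify which element generates it. Establishing that the (twisted) Jacobian class is a generator is in fact the hardest step in this whole circle of ideas --- in the multigraded setting it is exactly Proposition~\ref{prop:JPJ}, whose proof occupies a couple of pages and requires an auxiliary lemma (Lemma~\ref{lem:MGallcoeffs}) and an induction plus reduction to $\ZZ$. Invoking Proposition~\ref{prop: SGduality} here is circular: that $\sylv_{\b0}$ generates is a special case of the statement you are trying to prove.

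Second, even granting that $\sylv_{\b0}$ generates (so that your linear-independence argument is valid), the final inference ``the module is free of rank equal to the number of the $\sylv_\bgamma$ and they are independent, therefore they form a basis'' is false over a general commutative ring $A_k$; for instance $\{2\}$ is independent in the free rank-one $\ZZ$-module $\ZZ$ but does not generate. You do not actually prove spanning. The repair --- which is precisely what the paper does in Theorem~\ref{thm:main} --- is to observe that the duality isomorphism $(I^{\sat}/I)_{\delta-\nu'}\xrightarrow{\sim}\Hom_{A_k}(C_{\nu'},(I^{\sat}/I)_\delta)$ is realized by the multiplication pairing, so the identity \eqref{equation-single} shows the $\sylv_\bgamma$ are sent bijectively onto the dual monomial basis; being the preimage of a basis under an isomorphism, they form a basis. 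This identification of the abstract duality map with the concrete multiplication pairing is the ingredient your argument is missing, and it is what makes ``independence'' and ``spanning'' come for free simultaneously rather than being handled by an (invalid) cardinality count.
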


The form $\sylv_{(0,\ldots,0)}$ plays a particular role: it is a generator of $\left(I^{\sat}/I\right)_{\delta}$,  which is a free $A$-module of rank one, and it is actually proportional to the more classical Jacobian determinant of the polynomials $F_0,\ldots,F_n$. 

We notice that Sylvester forms are, by construction, universal in the coefficients of each polynomial $F_i$ and are actually linear in each of these sets of coefficients. For any polynomial system $\bff$ with coefficients over a field $\KK$, we denote by $\Sylv_\bgamma(\bff)$ and $\sylv_\bgamma(\bff)$ the corresponding specialized Sylvester forms.

\subsection{Hybrid elimination matrices}

From the previous results one can extend the family of elimination matrices $\MM_\nu$ and get some more compact ones. For that purpose, for any integer $ \nu > \delta- \min_i d_i$ consider the $A$-modules morphism
\begin{eqnarray}\label{eq:SGpsi}
\Psi_\nu : \oplus_{i=0}^n C_{\nu-d_i}  \oplus_{\bgamma : |\bgamma|=\delta-\nu} A & \rightarrow & C_\nu \\ \nonumber
(G_0,\ldots,G_n, \ldots,\ell_\bgamma,\ldots) & \mapsto & \sum_{i=0}^n G_iF_i + \sum_{|\bgamma|=\delta-\nu}\ell_\bgamma \Sylv_{\bgamma}.
\end{eqnarray}
We denote by $\HH_\nu$ the matrix of \eqref{eq:SGpsi} in canonical bases. This matrix is made of two column blocks;  the matrix $\MM_\nu$ defined in Section \eqref{subsec:MacMat} and  the coefficient matrix of the Sylvester forms of degree $\delta-\nu$. In particular, if $\nu> \delta$ then the second block vanishes and $\HH_\nu=\MM_\nu$. Therefore, the family of matrices $\HH_\nu$ can be seen as an extension of the family of matrices $\MM_\nu$ that is valid for integers $\nu$ such that $\delta-\min_i d_i < \nu \leq \delta$. 

As a consequence of Proposition \ref{prop:SGdualitySF}, $\HH_\nu$ is a presentation matrix of $B^{\sat}_\nu=C_\nu/I^{\sat}_\nu$. Hence, the ideal generated by the maximal minors of $\HH_\nu$ has the same radical as the elimination ideal $\Af$. Moreover, the matrices $\HH_\nu$ have the expected drop-of-rank property. To be more precise, observe that, by the construction,  the matrices $\HH_\nu$ are universal in the coefficients of $F_0,\ldots,F_n$. Hence, given a polynomial system $\bff=\{f_0,\ldots,f_n\}$ with coefficients in a field $\KK$ we denote by $\HH_\nu(\bff)$ the matrix $\HH_\nu$ specialized to the system $\bff$. We did not find the following result in the existing literature, although all the necessary ingredients to prove it are known (we notice that the matrix $\HH_\delta$ already appeared in \cite[Proposition 2.1]{CDS98}).

\begin{prop}\label{prop:SGHybMat} Assume that the polynomial system $\bff$ defines a finite subscheme in $\PP^n_{\bar{\KK}}$ of degree $\kappa$. Then, for all $\nu > \delta - \min_i d_i$, the corank of $\HH_\nu(\bff)$ is equal to $\kappa$.
\end{prop}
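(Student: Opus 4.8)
The plan is to identify the corank of $\HH_\nu(\bff)$ with the dimension of the cokernel of the specialized map $\Psi_\nu(\bff)$, and then to recognize that cokernel as a graded component of the specialized saturated quotient ring. First I would recall from Proposition \ref{prop:SGdualitySF} that, in the generic setting, the image of $\Psi_\nu$ is exactly $I^{\sat}_\nu$: the columns coming from the $\MM_\nu$ block span $I_\nu$, and the Sylvester forms $\Sylv_\bgamma$ with $|\bgamma|=\delta-\nu$ have classes $\sylv_\bgamma$ that form an $A$-basis of $(I^{\sat}/I)_\nu$, so together they generate $I^{\sat}_\nu$ and $\Psi_\nu$ is a presentation of $B^{\sat}_\nu=C_\nu/I^{\sat}_\nu$. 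Since presentations are compatible with base change, $\Psi_\nu(\bff)$ is a presentation of $B^{\sat}_\nu \otimes_A \KK = B^{\sat}(\bff)_\nu$, hence
$$\operatorname{corank}\HH_\nu(\bff) = \dim_\KK \Coker \Psi_\nu(\bff) = \HF_{B^{\sat}(\bff)}(\nu).$$

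Next I would invoke Lemma \ref{lem:SGregIsat}: under the hypothesis that $\bff$ defines a finite subscheme of $\PP^n_{\bar\KK}$ of degree $\kappa$, one has $\HF_{B^{\sat}(\bff)}(\nu)=\kappa$ for every $\nu>\delta-\min_i d_i$. Combining this with the previous identification gives $\operatorname{corank}\HH_\nu(\bff)=\kappa$ for all such $\nu$, which is exactly the claim. For the range $\nu>\delta$ the second column block of $\HH_\nu$ is empty, so $\HH_\nu=\MM_\nu$ and the statement reduces to Proposition \ref{prop:SGKoszulMat}; thus the genuinely new content is the range $\delta-\min_i d_i<\nu\le\delta$, where the Sylvester columns are needed to fill out a presentation of $I^{\sat}_\nu$ rather than merely $I_\nu$.

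The step I expect to require the most care is the base-change argument, i.e.\ justifying that $\Psi_\nu(\bff)$ presents $B^{\sat}(\bff)_\nu$ and not some larger module. This is a formal consequence of the right-exactness of $-\otimes_A\KK$ applied to the exact sequence $\bigoplus_i C_{\nu-d_i}\oplus\bigoplus_{|\bgamma|=\delta-\nu}A \xrightarrow{\Psi_\nu} C_\nu \to B^{\sat}_\nu \to 0$ of finitely generated $A$-modules: tensoring yields the exact sequence presenting $B^{\sat}_\nu\otimes_A\KK$, and one identifies this last module with $B^{\sat}(\bff)_\nu$ by the very definition of the $\bff$-notation (saturation taken generically, then specialized). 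The cokernel dimension is therefore $\HF_{B^{\sat}(\bff)}(\nu)$, and no flatness or further hypothesis on $\bff$ is needed beyond what Lemma \ref{lem:SGregIsat} already assumes. One should also remark, as after Proposition \ref{prop:SGKoszulMat}, that the degree $\kappa$ of an empty subscheme is zero, so the statement correctly detects the absence of common roots as the non-drop of rank (full rank) of $\HH_\nu(\bff)$.
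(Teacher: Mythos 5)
Your proposal is correct and is essentially the same argument as the paper's (the paper simply states ``by construction the cokernel of $\HH_\nu(\bff)$ is $B^{\sat}(\bff)_\nu$'' and then cites Lemma \ref{lem:SGregIsat}); you have merely unfolded what ``by construction'' means, namely that Proposition \ref{prop:SGdualitySF} makes $\Psi_\nu$ a presentation of $B^{\sat}_\nu$ and right-exactness of $-\otimes_A\KK$ carries this to a presentation of $B^{\sat}(\bff)_\nu$ under the paper's convention that the saturation is taken generically and then specialized. The extra care you take on the base-change step is a useful expansion, not a deviation.
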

\begin{proof} By construction, the cokernel of $\HH(\bff)_\nu$ is isomorphic to $B^{\sat}(\bff)_\nu$ and hence the  claimed result follows from Lemma \ref{lem:SGregIsat}.
\end{proof}

We notice that the matrix $\HH_{\delta-\min_i d_i+1}$ is of smaller size than the matrix $\MM_{\delta+1}=\HH_{\delta+1}$ and it can be used in a similar way to solve the polynomial system $f_0=\cdots=f_n=0$ without changing its geometric structure. More precisely, assuming $d_0\geq d_1\geq \ldots\geq d_n$, the number of rows of $\psi_{\delta+1-\min_i d_i}$ is equal to 
$\binom{\sum_{i=0}^{n-1}d_i}{n}$
whereas the number of rows of the more commonly used matrix  $\phi_{\delta+1}$ is equal to 
$\binom{ \sum_{i=0}^{n}d_i}{n}.$ We also mention that for $\nu \leq \delta$ we call the matrices $\HH_\nu$ ``hybrid elimination matrices'' because of the following example. 
 
\begin{exmp} If $n=1$ we have $\delta=d_0+d_1-2$ and the matrix $\MM_{d_1+d_2-1}$ is the Sylvester matrix whose determinant is the  Sylvester resultant  of $f_0$ and $f_1$. The matrices $\HH_{\nu}$, with $\max_i d_i \leq \nu \leq \delta$, corresponds to the well known hybrid Bézout matrices; see for instance \cite{SGD97,DG02}.	
\end{exmp}


In the rest of this paper, we will generalize the above results to the multigraded case, i.e.~in the case where the polynomials $F_i$ are multihomogeneous polynomials. For that purpose we will prove a (partial) duality property and introduce multigraded Sylvester forms.

\section{Multigraded saturation and duality}\label{sec:duality}

In the previous section we considered homogeneous polynomial equations defining hypersurfaces in the projective space $\mathbb{P}^n$. From now on we will consider  multihomogeneous polynomial equations in a product of projective spaces. Our main goal in this section is to provide generalizations of Lemma \ref{lem:SGregIsat} and Proposition \ref{prop: SGduality} to this context.  We will use the following notation for the rest of the article. 

\begin{notn}\label{Notation}
Fix positive integers $n_1,\dots, n_r$ and set $n= 
n_1+\cdots + n_r$. For all $j=1,\ldots,r$ we denote by $\bx_j$ the set of variables $x_{j,0},\ldots, x_{j,n_j}$ and for all $i=0,\ldots,n$, we consider the generic multihomogeneous polynomial  of degree $\bd_i= (d_{i,1},\dots, d_{i,r})$
$$F_i(\bx_1,\ldots,\bx_r):=\sum_{|\balpha_1|=d_{i,1}, \ldots, |\balpha_r|=d_{i,r}} U_{i,\balpha_1,\ldots,\balpha_r}\bx_1^{\balpha_1}\cdots \bx_r^{\alpha_r}.$$
We define the universal ring of coefficients over the commutative ring $k$ as 
$$A_k=k[U_{i,\balpha_1,\ldots,\balpha_r} : i=0,\ldots,n, |\balpha_1|=d_{i,1}, \ldots, |\balpha_r|=d_{i,r}]$$ 
which is multigraded by setting 
$$\deg(U_{i,\balpha_1,\ldots,\balpha_r})=(0,\ldots,0,1,0,\ldots,0)=:\textbf{e}_i$$ 
(the ``1'' is at the $i^\mathrm{th}$ place). Thus, the polynomials $F_i$ are multihomogeneous polynomials in the polynomial ring  $C:=A_k[\bx_1,\ldots,\bx_r]$: 
$$\deg_{\bx_j} F_i=d_{i,j}\geq 1 \textrm{ and } \deg_{\bU} F_i=\mathbf{e}_i,$$
where $\textbf{U}$ denotes the set of all coefficients of all the polynomials $F_0,\ldots,F_n$.
We define the ideals $\bb= \prod_{i=1}^r \mm_i$ where $\mm_i:=(\bx_i)$ for all $i$, $\mm:= \sum_{i=1}^r \mm_i \subset C$ and $I:= (F_0,\dots , 
F_{n})\subset C$, as well as the quotient ring $B:= C/I$. For all $j=1,\ldots,r$ we define $\delta_j:=(\sum_{i=0}^n d_{i,j})-(n_j+1)$ and we set
\begin{equation}\label{eq:delta}
\bdelta = (\delta_1,\dots , \delta_r) = \sum_{i=0}^{n}\bd_i-(n_1+1,n_2+1,\dots, n_r+1).	
\end{equation}
\end{notn}
 Geometrically, the multigraded ring $C$ can be interpreted as the coordinate ring of the product of projective schemes $\PP_{A_k}=\PP^{n_1}_{A_k}\times \ldots \times \PP^{n_r}_{A_k}$.  Thus,  $\bb$ is the irrelevant ideal of $\PP_{A_k}$. The ideal $I$ defines a subscheme in $\PP_{A_k}$ whose canonical projection on the affine  space $\Spec(A_k)$ is defined by the elimination ideal $\Af=(I:\bb^\infty)\cap A_k$.  Notice that, as expected, this elimination ideal is equal to $0$ if the number of generators of $I$ is less than or equal $n$ (see Corollary \ref{cor: elimination ideal}). Therefore, having $n+1$ polynomials as the generators of $I$ is the first interesting case of study for $\Af$.

\medskip

We begin with two technical lemmas that are taken from \cite[Chapter I, Proposition 3.1.2]{ChThesis88}. We reproduce their proofs for the sake of completeness and accessibility. These lemmas are extensions of well-known results in the single graded case, see for instance \cite[\S 4.2 and \S 4.7]{J91}, to the multigraded case.

\begin{lem}\label{lem: appendix}
For any sequence of $r$ integers $(i_1,\dots,i_r)$ such that $0\leq i_j\leq n_j$ for all $j=1,\ldots,r$, there exists an isomorphism of $A'_k[\bx_1,\dots,\bx_r]$-algebras
$$
B_{x_{1,i_1}\ldots x_{r,i_r}}\xrightarrow{\sim} A'_k[\bx_1,\dots,\bx_r]_{x_{1,i_1}\cdots x_{r,i_r}}
$$
where $A'_{k}= k[U_{i, \balpha_1,\dots, \balpha_r} \vert \, U_{i, \balpha_1,\dots, \balpha_r} \neq U_{i, i_1,\dots, i_r} \,\, 0\leq i\leq n]$.
\end{lem}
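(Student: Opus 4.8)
The statement is the multigraded analogue of the classical fact that inverting one variable in each graded block trivializes the generic complete-intersection quotient. The plan is to use the chosen distinguished coefficients $U_{i,i_1,\dots,i_r}$ to solve the equations $F_i=0$ linearly for these coefficients, and to show that this substitution produces the desired isomorphism. Concretely, for each $i$ write
\[
F_i = U_{i,i_1,\dots,i_r}\, x_{1,i_1}^{d_{i,1}}\cdots x_{r,i_r}^{d_{i,r}} + G_i,
\]
where $G_i\in A'_k[\bx_1,\dots,\bx_r]$ collects all the other monomials; note $G_i$ does not involve any of the $n+1$ distinguished variables $U_{i,i_1,\dots,i_r}$. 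In the localization $B_{x_{1,i_1}\cdots x_{r,i_r}}$ the monomial $x_{1,i_1}^{d_{i,1}}\cdots x_{r,i_r}^{d_{i,r}}$ is a unit, so the relation $F_i=0$ is equivalent to
\[
U_{i,i_1,\dots,i_r} = -\,G_i\,\big/\,\bigl(x_{1,i_1}^{d_{i,1}}\cdots x_{r,i_r}^{d_{i,r}}\bigr).
\]
This defines a candidate inverse map $A'_k[\bx_1,\dots,\bx_r]_{x_{1,i_1}\cdots x_{r,i_r}}\to B_{x_{1,i_1}\cdots x_{r,i_r}}$, namely the natural inclusion, while the map in the statement sends $U_{i,i_1,\dots,i_r}$ to the above expression.

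\textbf{Key steps.}
First I would define the $A'_k[\bx]$-algebra homomorphism $\phi:C_{x_{1,i_1}\cdots x_{r,i_r}}\to A'_k[\bx]_{x_{1,i_1}\cdots x_{r,i_r}}$ that is the identity on $A'_k[\bx]$ and sends each distinguished coefficient $U_{i,i_1,\dots,i_r}$ to $-G_i/(x_{1,i_1}^{d_{i,1}}\cdots x_{r,i_r}^{d_{i,r}})$; this is well defined since the $U_{i,i_1,\dots,i_r}$ are free variables over $A'_k[\bx]$ and their images lie in the target. Second, I would check $\phi(F_i)=0$ for all $i$: substituting the formula for $U_{i,i_1,\dots,i_r}$ into $F_i=U_{i,i_1,\dots,i_r}x_{1,i_1}^{d_{i,1}}\cdots x_{r,i_r}^{d_{i,r}}+G_i$ gives $-G_i+G_i=0$, so $\phi$ factors through a ring map $\bar\phi:B_{x_{1,i_1}\cdots x_{r,i_r}}\to A'_k[\bx]_{x_{1,i_1}\cdots x_{r,i_r}}$. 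Third, I would exhibit the inverse: the composite $A'_k[\bx]_{x_{1,i_1}\cdots x_{r,i_r}}\hookrightarrow C_{x_{1,i_1}\cdots x_{r,i_r}}\to B_{x_{1,i_1}\cdots x_{r,i_r}}$ is an $A'_k[\bx]$-algebra map $\psi$ in the other direction. Fourth, I would verify $\bar\phi\circ\psi=\mathrm{id}$ (immediate, since $\bar\phi$ restricts to the identity on $A'_k[\bx]$) and $\psi\circ\bar\phi=\mathrm{id}$: it suffices to check this on the algebra generators of $B_{x_{1,i_1}\cdots x_{r,i_r}}$ over $A'_k[\bx]$, i.e.\ on the classes of the $U_{i,i_1,\dots,i_r}$, and $\psi(\bar\phi(U_{i,i_1,\dots,i_r})) = \psi\bigl(-G_i/(\cdots)\bigr)$ equals the class of $-G_i/(\cdots)$ in $B_{x_{1,i_1}\cdots x_{r,i_r}}$, which is the class of $U_{i,i_1,\dots,i_r}$ precisely because $F_i=0$ there.

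\textbf{Main obstacle.}
The argument is essentially bookkeeping; the only genuine point to be careful about is that the $n+1$ distinguished coefficients $U_{i,i_1,\dots,i_r}$ are indeed algebraically independent over $A'_k[\bx]$ and that $C$ is the polynomial ring $A'_k[\bx][U_{0,i_1,\dots,i_r},\dots,U_{n,i_1,\dots,i_r}]$, so that $\phi$ is well defined on the nose. One should also confirm that each monomial $x_{1,i_1}^{d_{i,1}}\cdots x_{r,i_r}^{d_{i,r}}$ actually occurs in $F_i$ (it does, since $(d_{i,1},\dots,d_{i,r})$ has each coordinate $\geq 1$ and $0\leq i_j\leq n_j$, so $(\balpha_1,\dots,\balpha_r)=(d_{i,1}\mathbf{e}_{i_1},\dots,d_{i,r}\mathbf{e}_{i_r})$ is an admissible multi-index), and that it becomes a unit after the localization; both are clear. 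The multigraded nature adds no essential difficulty over the single-graded case of \cite[\S 4.2]{J91}: one simply localizes at the product $x_{1,i_1}\cdots x_{r,i_r}$ of one variable per block and uses one distinguished coefficient per equation.
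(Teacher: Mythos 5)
Your proof is correct and takes essentially the same approach as the paper: you solve for the distinguished coefficient $U_{i,i_1,\dots,i_r}$ (the paper's $\epsilon_i$) using the decomposition $F_i=U_{i,i_1,\dots,i_r}\tau_i+G_i$ and define the substitution map $\epsilon_i\mapsto -G_i/\tau_i$. The only difference is that you spell out the verification that the two maps are mutually inverse, a step the paper leaves implicit.
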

\begin{proof}
For simplicity, we consider the case $(i_1,\ldots,i_r)=(n_1,\ldots,n_r)$ and we set $\sigma:=x_{1,n_1}\ldots x_{r,n_r}$. For all $i=0,\ldots,n$ we also set  
$\epsilon_i:=U_{i, n_1,\dots, n_r}$ and we define the monomial $\tau_i= x_{1,n_1}^{d_{i,1}}\cdots x_{r,n_r}^{d_{i,r}}$ and the polynomial $H_i := F_i- \epsilon_i\tau_i$. Notice that $A_k=A'_{k}[\epsilon_1,\dots, \epsilon_r]$. Now, consider the morphism $\phi$ of $A'_{k}[\bx_1,\dots \bx_r]$- algebras
\begin{align*}
\phi: A_k[\bx_1,\dots,\bx_r]&\rightarrow A'_{k}[\bx_1,\dots \bx_r]_{\sigma}\\
\epsilon_i&\mapsto - {H_i}/{\tau_i}
\end{align*}
which leaves invariant all variables and all coefficients except the $\epsilon_i$'s. Since  
$\phi(F_i)=0$ for all $i$, $\phi$ induces the claimed isomorphism of $A'_{k}[\bx_1,\dots \bx_r]$- algebras.
\end{proof}

\begin{rem}\label{rem:TFmon}
	A consequence of the above lemma is that for any choice of sequences of integers $(i_1,\ldots,i_r)$ and $(i_1',\ldots,i_r')$, the monomial $x_{1,i_1'}\cdots x_{r,i_r'}$ is not a zero divisor in $B_{x_{1,i_1}\ldots x_{r,i_r}}$. In particular, if the class of $P\in A_k[\bx_1,\dots \bx_r]$ is equal to 0 in $B_{x_{1,i_1}\ldots x_{r,i_r}}$ then it is also equal to 0 in $B_{x_{1,i_1'}\ldots x_{r,i_r'}}$.
\end{rem}

\begin{lem}\label{lem-reqular-sequence}
The generic multihomogeneous polynomials $F_0$, $\ldots$, $F_{n}$ form a regular sequence in   $C_{x_{1,i_1}\ldots x_{r,i_r}}$ for any sequence $(i_1,\dots,i_r)$ of $r$ integers such that $0\leq i_j\leq n_j$ for all $j=1,\ldots,r$.
\end{lem}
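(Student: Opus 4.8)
The plan is to localize and then run an induction on the number of polynomials, exploiting Lemma \ref{lem: appendix}. After inverting the single variable $x_{1,i_1}\cdots x_{r,i_r}$, the ring $C_{x_{1,i_1}\cdots x_{r,i_r}}$ becomes, up to the isomorphism of Lemma \ref{lem: appendix}, a localization of a polynomial ring over $A'_k$ in the $n+1$ sets of variables $\bx_1,\ldots,\bx_r$, with the dehomogenizing coordinates inverted. The key point is that each generic polynomial $F_i$ carries its own ``private'' coefficient variable $U_{i,i_1,\ldots,i_r}$ (the coefficient of $x_{1,i_1}^{d_{i,1}}\cdots x_{r,i_r}^{d_{i,r}}$), and these $n+1$ variables are algebraically independent over $A'_k[\bx_1,\ldots,\bx_r]_{x_{1,i_1}\cdots x_{r,i_r}}$. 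So the assertion reduces to a standard genericity statement: $n+1$ polynomials, each involving a distinct fresh variable linearly, form a regular sequence.

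\textbf{Key steps.} First I would reduce to the case $(i_1,\ldots,i_r)=(n_1,\ldots,n_r)$ by symmetry (relabeling the variables within each block $\bx_j$), and adopt the notation $\sigma$, $\tau_i$, $\epsilon_i$, $H_i$ of the proof of Lemma \ref{lem: appendix}, so that $F_i=\epsilon_i\tau_i+H_i$ with $H_i\in A'_k[\bx_1,\ldots,\bx_r]$ and $A_k=A'_k[\epsilon_0,\ldots,\epsilon_n]$. Second, I would prove by induction on $0\le s\le n$ that $F_0,\ldots,F_s$ is a regular sequence in $R_s:=A'_k[\epsilon_{s+1},\ldots,\epsilon_n][\bx_1,\ldots,\bx_r]_\sigma$: for the inductive step, $R_s = R_{s-1}'[\epsilon_s]$ where $R_{s-1}'=A'_k[\epsilon_{s+1},\ldots,\epsilon_n][\bx_1,\ldots,\bx_r]_\sigma$, and modulo $(F_0,\ldots,F_{s-1})$ the variable $\epsilon_s$ remains a polynomial indeterminate over the quotient $R'_{s-1}/(F_0,\ldots,F_{s-1})$ (since none of $F_0,\ldots,F_{s-1}$ involves $\epsilon_s$), hence $F_s = \epsilon_s\tau_s + H_s$, which is monic of degree $1$ in $\epsilon_s$ with unit leading coefficient $\tau_s$ (a power of the inverted $\sigma$), is a nonzerodivisor on $R_s/(F_0,\ldots,F_{s-1})$; the base case $s=0$ is that $F_0=\epsilon_0\tau_0+H_0$ is a nonzerodivisor in $R_0=A'_k[\epsilon_1,\ldots,\epsilon_n][\bx_1,\ldots,\bx_r]_\sigma$, again because it is monic of degree $1$ in the indeterminate $\epsilon_0$ with unit leading coefficient, and an integral domain is in play. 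Third, for $s=n$ this gives that $F_0,\ldots,F_n$ is a regular sequence in $R_n=A'_k[\bx_1,\ldots,\bx_r]_\sigma = A_k[\bx_1,\ldots,\bx_r]_\sigma = C_\sigma$ (using $A_k=A'_k[\epsilon_0,\ldots,\epsilon_n]$), which is exactly the claim.

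\textbf{Main obstacle.} There is no serious obstacle; the only thing requiring care is bookkeeping the identification $C_\sigma \cong A'_k[\bx_1,\ldots,\bx_r]_\sigma$ and tracking which coefficient variables survive the localization, together with the elementary fact that a degree-one polynomial $a T + b$ with $a$ a unit is a nonzerodivisor in $S[T]$ whenever $S$ is an integral domain (here all the rings in sight are domains, being localizations of polynomial rings over $\ZZ$ or over a domain $k$; if $k$ is only assumed to be a commutative ring one argues instead that $aT+b$ with $a$ a unit is always a nonzerodivisor in $S[T]_{\text{loc}}$ regardless of $S$, since if $(aT+b)g=0$ then comparing leading coefficients in $T$ forces $g=0$). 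I would also remark that an entirely equivalent and perhaps cleaner route is to invoke Lemma \ref{lem: appendix} directly: $B_\sigma\cong A'_k[\bx_1,\ldots,\bx_r]_\sigma$ has Krull dimension $\dim A'_k + n = \dim C_\sigma - (n+1)$, so the $n+1$ elements $F_0,\ldots,F_n$ cut down the dimension by $n+1$ in a Cohen--Macaulay (indeed regular) ring $C_\sigma$, hence form a regular sequence; I would present the direct inductive argument as the primary proof and mention this dimension-count as an alternative.
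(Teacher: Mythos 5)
Your approach is sound and it is genuinely more elementary than the paper's, so it is worth contrasting, but there is a consistent indexing slip that should be flagged before the comparison. You define $R_s := A'_k[\epsilon_{s+1},\ldots,\epsilon_n][\bx_1,\ldots,\bx_r]_\sigma$ and claim $F_0,\ldots,F_s$ is regular there, but $F_i$ involves the coefficient $\epsilon_i$ for each $0\le i\le s$, and none of $\epsilon_0,\ldots,\epsilon_s$ belongs to $R_s$ as you have written it; the final identification $R_n = A'_k[\bx]_\sigma = A_k[\bx]_\sigma$ is also false as an equality of rings. The fix is to flip the indices: take $R_s := A'_k[\epsilon_0,\ldots,\epsilon_s][\bx_1,\ldots,\bx_r]_\sigma$, so that $R_s = R_{s-1}[\epsilon_s]$, $F_0,\ldots,F_s \in R_s$, and regularity in $R_s$ transfers to regularity in the polynomial extension $C_\sigma = R_s[\epsilon_{s+1},\ldots,\epsilon_n]$. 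With that correction the inductive step is exactly what you describe: $F_0,\ldots,F_{s-1}$ do not involve $\epsilon_s$, so $R_s/(F_0,\ldots,F_{s-1}) \cong \bigl(R_{s-1}/(F_0,\ldots,F_{s-1})\bigr)[\epsilon_s]$, and $F_s = \epsilon_s\tau_s + H_s$ is degree one in $\epsilon_s$ with leading coefficient $\tau_s$ a unit (a product of powers of the inverted variables $x_{j,n_j}$, not literally a power of $\sigma$, but a unit nonetheless), hence a nonzerodivisor by the leading-coefficient comparison you mention, which works over any commutative ring.

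The paper's proof takes the same two-step reduction (symmetry to $(n_1,\ldots,n_r)$, then induction on the length of the sequence), but its nonzerodivisor step is different: it invokes the isomorphism of Lemma~\ref{lem: appendix} applied to $t+1$ polynomials to replace the quotient $(C/(F_0,\ldots,F_t))_\sigma$ by $A^{(t)}_k[\bx]_\sigma$, and then appeals to the Dedekind--Mertens corollary (a polynomial is a nonzerodivisor iff its content ideal has zero annihilator). Your argument bypasses both ingredients and uses only the elementary observation that a polynomial which is linear in a fresh variable with unit leading coefficient is a nonzerodivisor; this is a genuine simplification, and it works uniformly over any commutative base ring $k$, which is the generality the paper needs. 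Your alternative ``dimension count'' route, by contrast, only works when $k$ is Noetherian of finite Krull dimension (so that $C_\sigma$ is Noetherian and Cohen--Macaulay and one can invoke the unmixedness theorem), and even over a field it requires a small argument that the ideal actually has height $n+1$; you are right to relegate it to a remark rather than the primary proof.
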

\begin{proof} As in the proof of Lemma \ref{lem: appendix}, we treat the case $(i_1,\ldots,i_r)=(n_1,\ldots,n_r)$ for simplicity and we set $\sigma:=x_{1,n_1}\ldots x_{r,n_r}$.
	
To begin with, we claim that $F_0$ is a nonzero divisor in $C$. To see it we use the following corollary of Dedekind-Mertens Lemma (see \cite[Corollary 2.8]{Buse-Jouanolou}): a polynomial $F$ is a nonzero divisor in $C$ if and only if its content ideal (i.e.~the ideal in $A_k$ generated by the coefficients of $F$) does not divide zero in $A_k$. Thus, as any coefficient $U_{i, \balpha_1,\dots, \balpha_r}$ is a  nonzero divisor in $A_k$,  we deduce that $F_0$ is a nonzero divisor in $C$, hence in $C_{\sigma}$. 

Now, set 
$\epsilon_i:=U_{i, n_1,\dots, n_r}$ for all $i=0,\ldots,n$, let $t$ be an integer such that $0< t< n$ and define  
$$
A^{(t)}_k := k[U_{i, \balpha_1,\dots, \balpha_r} \, \vert \,  U_{i, \balpha_1,\dots, \balpha_r}\neq \epsilon_i, \, 0\leq i\leq t],
$$
so that $A_k = A^{(t)}_k[\epsilon_1,\ldots ,\epsilon_t]$. According to Lemma \ref{lem: appendix} (applied with $t+1$ polynomials instead of $n+1$),
$$
\left( A_k[\bx_1,\dots \bx_r]/(F_0,\dots F_t)\right)_{\sigma} \simeq A^{(t)}_k[\bx_1,\dots \bx_r]_{\sigma}
$$
and since $F_{t+1}$ is a nonzero divisor in $A^{(t)}_k[\bx_1,\dots,\bx_r]$ by the above corollary of Dedekind-Mertens Lemma, we deduce that $F_{t+1}$ is a nonzero divisor in $\left( A_k[\bx_1,\dots \bx_r]/(F_0,\dots F_t)\right)_{\sigma}\simeq A^{(t)}_k[\bx_1,\dots \bx_r]_{\sigma}$.
\end{proof}

After these preliminaries, our next task is to provide the precise statement and proof of Theorem A. As our strategy relies on the analysis of some local cohomology modules, we first introduce additional notation in order to describe the local cohomologies of the polynomial ring $C$ with respect to $\bb$.

\begin{defn} For all $j=1,\ldots,r$, set $C_j=A_k[\bx_j]$ and define the $C_j$-module
	$$\check{C}_j:= H^{n_j+1}_{\mm_j}(C_j) = \dfrac{1}{x_{j,0}\cdots x_{j,n_j}}A_k[x_{j,0}^{-1},\dots, x_{j,n_j}^{-1}],$$
which is canonically $\ZZ$-graded. For any subset $\alpha := \{i_1,\dots, i_t\}\subseteq \{1,\dots,r\}$ such that $1\leq i_1<\cdots< i_t\leq r$ we define the $C$-module
$$\check{C}_{\alpha}:= 
D_{1}\otimes_{A_k} \cdots \otimes_{A_k} D_{r} \ \textrm{ with } D_j:=C_j \textrm{ if } j\notin \alpha \textrm{ and } D_j:=\check{C}_j \textrm{ else},
$$
which is canonically $\ZZ^r$-graded (recall $C=C_1\otimes_{A_k} \cdots \otimes_{A_k} C_r$). Finally, for every $\alpha\neq \emptyset$ we define 
$$Q_{\alpha}:= \Supp(\check{C}_{\alpha})=\{\bmu \in \ZZ^r \, : \, (\check{C}_{\alpha})_\bmu\neq 0 \} \subset \ZZ^r,$$ 
with the convention $Q_{\emptyset}:= \emptyset$.
\end{defn}

\begin{prop}\label{prop: support cohomology}
With the above notation, the following properties hold:
\begin{itemize}
\item[$(1)$]$ H^{\ell}_{\bb}(C) \cong \bigoplus_{\substack{\alpha \subseteq \{ 1,\dots,r\}\\ n(\alpha)+1=\ell}} \check{C}_{\alpha}
$, where $n(\alpha) = \sum_{j\in \alpha} n_j$.
\item[$(2)$] Let $\alpha = \{ i_1,\dots,i_t\}$ such that $1\leq i_1<\cdots < i_t\leq r$, then 
$$
Q_{\alpha}= \bigoplus_{1\leq j\leq r} ((-1)^{\mathrm{sgn}
_{\alpha}(j)}\N- \mathrm{sgn}_{\alpha}(j)(n_j+1))\cdot \textbf{e}_j 
\subset \mathbb{Z}^r,
$$
where $\mathrm{sgn}_{\alpha}(j)= 1$ if $j\in \alpha$ and $\mathrm{sgn}_{\alpha}(j)=0$ if $j\notin \alpha$. 
\end{itemize}
\end{prop}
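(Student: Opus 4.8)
\textbf{Plan for the proof of Proposition \ref{prop: support cohomology}.}

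The starting point is the Künneth-type decomposition of local cohomology with respect to a sum of ideals living in independent sets of variables. Since $C = C_1\otimes_{A_k}\cdots\otimes_{A_k}C_r$ as $A_k$-algebras, each $C_j$ is a polynomial ring over $A_k$ in the variables $\bx_j$, and $\bb=\prod_{i=1}^r\mm_i$ has the same radical as $\mm=\sum_{i=1}^r\mm_i$, one has $H^\ell_{\bb}(C)=H^\ell_{\mm}(C)$. The plan is to compute $H^\ell_\mm(C)$ by means of the Čech complex, or equivalently to invoke the standard Künneth formula for local cohomology of a tensor product over a base ring (here $A_k$): because $C_j$ is $A_k$-flat, $H^\bullet_\mm(C)\cong \bigotimes_{j=1}^r H^\bullet_{\mm_j}(C_j)$ in the derived sense, and since $C_j$ is Cohen--Macaulay over $A_k$ the only nonvanishing local cohomology of $C_j$ with support in $\mm_j$ sits in cohomological degree $n_j+1$, namely $H^{n_j+1}_{\mm_j}(C_j)=\check C_j$. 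Taking the tensor product over all $j$, the contribution in total cohomological degree $\ell$ is the direct sum over all subsets $\alpha\subseteq\{1,\dots,r\}$ with $\sum_{j\in\alpha}(n_j+1)=\ell$; but note $\sum_{j\in\alpha}(n_j+1)=n(\alpha)+|\alpha|$, whereas the statement indexes by $n(\alpha)+1=\ell$. The reconciliation is that the convention $\check C_\alpha = D_1\otimes\cdots\otimes D_r$ with $D_j=C_j$ for $j\notin\alpha$ means each factor $C_j$ for $j\notin\alpha$ is already ``concentrated in degree $0$'' in the Čech complex, so the homological shift coming from $\alpha$ is really $\sum_{j\in\alpha}(n_j+1)$; one must be careful about the precise indexing convention the authors use for the Čech complex of $\bb$ (which is a product, not a sum), and I expect the shift $n(\alpha)+1$ versus $n(\alpha)+|\alpha|$ to be exactly the point that needs a careful bookkeeping argument. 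So step one is: set up the Čech complex $\check{\mathcal C}^\bullet_{\bb}(C)$ and identify its cohomology with the claimed direct sum, tracking the grading carefully.

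For part $(1)$, the cleanest route avoiding convention headaches is a direct induction on $r$: for $r=1$ it is the classical fact $H^\ell_{\mm_1}(C_1)=0$ for $\ell\neq n_1+1$ and $=\check C_1$ for $\ell=n_1+1$ (local cohomology of a polynomial ring over $A_k$ with support in the irrelevant ideal). For the inductive step, write $C = C'\otimes_{A_k}C_r$ with $C'=A_k[\bx_1,\dots,\bx_{r-1}]$, and use the fact that $\bb = \bb'\cdot\mm_r$ where $\bb'$ is the analogous ideal in $C'$; then apply the Mayer--Vietoris / base-change spectral sequence $H^p_{\mm_r}(H^q_{\bb'}(C'))\otimes(\text{stuff})$, or more directly use that $H^\bullet_\mm(C') \otimes_{A_k}^{\mathbf L} C_r$ computes part of it and then take $H^\bullet_{\mm_r}$ of the result. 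Since everything is $A_k$-flat and the modules $\check C_\alpha$ are flat $A_k$-modules, all the $\mathrm{Tor}$ terms vanish and the spectral sequence degenerates, yielding the stated direct sum with the indexing by $n(\alpha)+1$ once one observes that passing from $r-1$ to $r$ either leaves $\alpha$ untouched (factor $C_r$) or adjoins $r$ to it, and in the latter case the cohomological degree jumps by $n_r+1$ while $n(\alpha)$ jumps by $n_r$ — so if one instead indexes by $n(\alpha)+1$ the jump is by $n_r$... here is precisely where the subtlety lies, and the honest resolution is that the authors' Čech complex for the \emph{product} $\bb=\prod\mm_i$ is built from the generators $x_{1,i_1}\cdots x_{r,i_r}$, a single ``layer'' of $\prod(n_j+1)$ generators, so its cohomological length is governed differently than the Čech complex of the \emph{sum}; I would carefully re-derive the $r=1$ and $r=2$ cases by hand from this product Čech complex to pin down the correct shift, and then induct.

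For part $(2)$, this is a direct computation of the $\ZZ^r$-graded support of $\check C_\alpha = \bigotimes_{j=1}^r D_j$. Since support of a tensor product over $A_k$ of $\ZZ$-graded $A_k$-modules is the (Minkowski) sum of the supports in each graded direction $\mathbf e_j$, it suffices to compute $\Supp_{\ZZ}(D_j)$ for each $j$. When $j\notin\alpha$, $D_j=C_j=A_k[\bx_j]$, which is nonzero exactly in nonnegative degrees, so its support is $\N\cdot\mathbf e_j$, matching $\mathrm{sgn}_\alpha(j)=0$ in the formula (giving $(+1)^0\N - 0 = \N$). When $j\in\alpha$, $D_j=\check C_j=\frac{1}{x_{j,0}\cdots x_{j,n_j}}A_k[x_{j,0}^{-1},\dots,x_{j,n_j}^{-1}]$, whose graded pieces are nonzero exactly in degrees $\leq -(n_j+1)$, i.e. its support is $-(n_j+1)-\N = (-1)^1\N - (n_j+1)$, matching $\mathrm{sgn}_\alpha(j)=1$. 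Assembling via the Minkowski sum over $j=1,\dots,r$ gives exactly the displayed formula for $Q_\alpha$. The only thing to check with a little care is that the tensor product over $A_k$ does not introduce cancellation or extra vanishing in any multidegree — but this is immediate because $A_k$ is a (multigraded) integral domain and all the $D_j$ are free $A_k$-modules on monomial bases, so $(D_1\otimes\cdots\otimes D_r)_{\bmu}=\bigotimes_j (D_j)_{\mu_j}$ is free of rank equal to the product of the (finite, possibly zero) ranks, hence nonzero iff each factor is. This part I expect to be routine; the genuine work, and the main obstacle, is the bookkeeping of the cohomological degree shift in part $(1)$ relative to the product structure of $\bb$.
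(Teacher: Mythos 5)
Your plan for part (2) is correct and complete: the support of a tensor product of free $\ZZ$-graded $A_k$-modules is the Minkowski sum of the supports of the factors, and computing $\Supp(C_j)=\NN$ and $\Supp(\check C_j)=-(n_j+1)-\NN$ gives exactly the displayed formula for $Q_\alpha$. No issue there. For the record, the paper does not actually prove this proposition: it simply cites Lemmas~6.5 and~6.7 of Botbol's paper, so you are on your own here, and your account of part~(2) is a perfectly good one.

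Part (1), however, opens with a false statement that would invalidate the entire first paragraph: $\bb=\prod_i\mm_i$ and $\mm=\sum_i\mm_i$ do \emph{not} have the same radical. One has $\sqrt{\bb}=\bigcap_i\mm_i$ whereas $\sqrt{\mm}=\mm$, and geometrically $V(\bb)=\bigcup_i V(\mm_i)$ while $V(\mm)=\bigcap_i V(\mm_i)$; these loci are quite different (indeed the whole point of the irrelevant ideal of a product of projective spaces is that it is the \emph{product}, not the sum). Consequently $H^\ell_\bb(C)\neq H^\ell_\mm(C)$ in general: the Künneth formula you invoke gives that $H^\bullet_\mm(C)$ is concentrated in the single cohomological degree $\sum_j(n_j+1)=n+r$, with no direct sum over $\alpha$ at all, so this route cannot possibly reproduce the statement. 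You do honestly flag later that the indexing $n(\alpha)+1$ versus $n(\alpha)+\sharp\alpha$ is the crux, but you never close the gap, and your proposed reconciliation in the first paragraph does not actually reconcile anything.

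The missing idea is that the sum over $\alpha$ \emph{and} the shift from $n(\alpha)+\sharp\alpha$ down to $n(\alpha)+1$ both come from the Mayer--Vietoris machinery for the union $V(\bb)=\bigcup_j V(\mm_j)$, not from Künneth alone. Concretely, induct on $r$ using the Mayer--Vietoris long exact sequence
$$
\cdots\to H^\ell_{\bb'+\mm_r}(C)\to H^\ell_{\bb'}(C)\oplus H^\ell_{\mm_r}(C)\to H^\ell_\bb(C)\to H^{\ell+1}_{\bb'+\mm_r}(C)\to\cdots,
$$
where $\bb'=\prod_{j<r}\mm_j$. Here $H^\bullet_{\bb'}(C)$ and $H^\bullet_{\mm_r}(C)$ are handled by the induction hypothesis and the base case, and $H^\bullet_{\bb'+\mm_r}(C)$ is handled by the genuine Künneth formula $H^m_{\bb'+\mm_r}(C'\otimes_{A_k}C_r)\cong\bigoplus_{p+q=m}H^p_{\bb'}(C')\otimes_{A_k}H^q_{\mm_r}(C_r)$, which is legitimate because the generators of $\bb'$ and of $\mm_r$ involve disjoint sets of variables. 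The boundary map $H^\ell_\bb\to H^{\ell+1}_{\bb'+\mm_r}$ is responsible for the $+1$ shift: a $\check C_\alpha$ with $r\in\alpha$ and $\sharp\alpha\geq 2$ appears in $H^{\ell+1}_{\bb'+\mm_r}(C)$ precisely when $n(\alpha)+1=\ell$, because the Künneth tensor raises the index by $n_r+1$ while the connecting homomorphism lowers it by $1$; iterating, each additional element of $\alpha$ beyond the first contributes both a $+(n_j+1)$ from Künneth and a $-1$ from a boundary map, which nets out to $n(\alpha)+1$. Finally, to conclude that the long exact sequence breaks into split short exact sequences, you need part (2): the supports $Q_\alpha$ are pairwise disjoint (since the sign of the $j$th coordinate of any $\bmu\in Q_\alpha$ records whether $j\in\alpha$), so there are no nonzero graded maps between distinct $\check C_\alpha$'s and all the MV differentials between different pieces vanish. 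Without this disjointness argument, degeneration is not automatic; flatness over $A_k$ alone, which is what you invoke, is not enough.
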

\begin{proof}
See Lemma 6.5 and Lemma 6.7 in \cite{botbol}.
\end{proof}

\begin{exmp}\label{ex: top cohomology}
With the above notation,  $H^{n+1}_{\bb}(C) = \check{C}_{\alpha}$ where $\alpha=\{1,\dots,r\}$. In addition, 
$$
\Supp(H^{n+1}_{\bb}(C)) = Q_{\{1,\dots,r\}} = -(n_1+1,\dots, n_r+1)-\NN^r= \bdelta+ \sum_{i=0}^n\bd_i- \NN^r.$$
\end{exmp}

We will analyze the support of the local cohomology modules of the terms of the multigraded Koszul complex associated with the sequence of multihomogeneous polynomials $F_0,\ldots,F_n$ in $C$. We denote this complex by $K_\bullet(\mathbf{F},C)$ and for all $i=0,1,2$ we set
$$\Gamma_i := \bigcup_{-1\leq p\leq n-1}  \Supp \left(H^{p+1}_{\bb}\left(K_{p+i}\left(\bF,C\right)\right)\right).$$
Recall that for subsets $A,B\subset \ZZ^r$ and $c\in \ZZ$,  
$$
A+ B := \{ a + b \, | \, a\in A \,\, \text{and}\,\, b\in B\} \,\, \text{and}\,\, c\cdot A := \{ c\cdot a \, | \, a\in A \}.
$$
Additionally, given a finite set of integers $\alpha \subset \NN$ we denote by $\sharp \alpha$ its number of elements. 

\begin{cor}\label{cor:Gammai} For $i=0,1,2$, the following equality of subsets in $\ZZ^r$ holds:
	$$
	\Gamma_i 
	=\bigcup_{\substack{\alpha \subset \{1,\ldots,r\}, \,  1\leq \sharp\alpha\leq r-1\\ 
	\lambda \subset \{0,\ldots,n\}, \, \sharp\lambda = n(\alpha)+i}} \ \left(\sum_{j\in \lambda} \bd_j + Q_{\alpha} \right), 
	$$
	where $n(\alpha)= \sum_{j\in \alpha} n_j$.	
\end{cor}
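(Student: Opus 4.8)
The plan is to compute $\Gamma_i$ directly from the definition by combining Proposition~\ref{prop: support cohomology} with the explicit description of the terms of the Koszul complex $K_\bullet(\mathbf{F},C)$. Recall that $K_{p}(\bF,C) = \bigwedge^{p}\left(\bigoplus_{j=0}^{n} C(-\bd_j)\right) \cong \bigoplus_{\lambda\subset\{0,\ldots,n\},\ \sharp\lambda = p} C\!\left(-\sum_{j\in\lambda}\bd_j\right)$ as a multigraded $C$-module. Since local cohomology commutes with finite direct sums and with the internal degree shifts (shifting the grading of the module shifts the support), we get
$$
H^{p+1}_{\bb}\!\left(K_{p+i}(\bF,C)\right) \cong \bigoplus_{\substack{\lambda\subset\{0,\ldots,n\}\\ \sharp\lambda = p+i}} H^{p+1}_{\bb}(C)\!\left(-\textstyle\sum_{j\in\lambda}\bd_j\right),
$$
whose support is $\bigcup_{\sharp\lambda = p+i}\left(\sum_{j\in\lambda}\bd_j + \Supp H^{p+1}_{\bb}(C)\right)$. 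Then I would substitute the decomposition $H^{p+1}_{\bb}(C)\cong\bigoplus_{n(\alpha)+1 = p+1}\check C_\alpha$ from part~(1) of Proposition~\ref{prop: support cohomology}, so that $\Supp H^{p+1}_{\bb}(C) = \bigcup_{n(\alpha) = p} Q_\alpha$, and take the union over $-1\le p\le n-1$.

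Carrying out the bookkeeping: after substitution, $\Gamma_i$ becomes the union over all triples $(p,\alpha,\lambda)$ with $-1\le p\le n-1$, $n(\alpha)=p$, $\sharp\lambda = p+i = n(\alpha)+i$, of the sets $\sum_{j\in\lambda}\bd_j + Q_\alpha$. The index $p$ is now redundant since it is determined by $\alpha$ via $p = n(\alpha)$, so I can drop it and index directly by $(\alpha,\lambda)$ with $\sharp\lambda = n(\alpha)+i$. The one remaining point is the range of $\alpha$: the condition $-1\le p\le n-1$ with $p=n(\alpha)$ translates to $-1\le n(\alpha)\le n-1$. The lower bound $n(\alpha)\ge -1$ is automatic and, combined with $\sharp\lambda = n(\alpha)+i\ge 0$, forces $n(\alpha)\ge -i$; since $n(\alpha)\ge 0$ always, the only genuinely restrictive case is the exclusion $\alpha=\emptyset$ (which gives $Q_\emptyset=\emptyset$ by convention, contributing nothing) when $i\ge 1$ versus when $i=0$ where $\alpha=\emptyset$ would force $\lambda=\emptyset$ and again contribute only $Q_\emptyset=\emptyset$. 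The upper bound $n(\alpha)\le n-1$ excludes $\alpha=\{1,\ldots,r\}$ (for which $n(\alpha)=n$); this matches the stated range $1\le\sharp\alpha\le r-1$ up to the subtlety that $1\le\sharp\alpha\le r-1$ is slightly stronger than $\alpha\ne\emptyset,\{1,\ldots,r\}$ only in that it does not by itself exclude, say, an $\alpha$ with $\sharp\alpha = r-1$ but $n(\alpha) = n$ — impossible since $n_j\ge 1$ — so in fact the two conditions $\{\alpha\ne\emptyset,\ n(\alpha)\le n-1\}$ and $\{1\le\sharp\alpha\le r-1\}$ are equivalent here. I would spell out this equivalence explicitly, using $n_j\ge 1$ for all $j$.

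Once the index set is rewritten as $\{(\alpha,\lambda) : 1\le\sharp\alpha\le r-1,\ \sharp\lambda = n(\alpha)+i\}$, the stated formula
$$
\Gamma_i = \bigcup_{\substack{\alpha\subset\{1,\ldots,r\},\ 1\le\sharp\alpha\le r-1\\ \lambda\subset\{0,\ldots,n\},\ \sharp\lambda = n(\alpha)+i}}\left(\sum_{j\in\lambda}\bd_j + Q_\alpha\right)
$$
follows immediately. I should also note the degenerate cases: when $n(\alpha)+i > n+1$ there is no subset $\lambda\subset\{0,\ldots,n\}$ of that cardinality, so the corresponding term is the empty set and drops out harmlessly; this is consistent with $\Supp$ of a zero module being empty. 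I do not anticipate a serious obstacle here — the whole argument is formal manipulation of direct sums, supports, and degree shifts — so the "hard part" is purely the combinatorial verification that the range of $p$ in the definition of $\Gamma_i$ matches the range $1\le\sharp\alpha\le r-1$ in the conclusion, which as explained reduces to the observation that each $n_j$ is a positive integer. I would present this as a short self-contained computation rather than invoking anything beyond Proposition~\ref{prop: support cohomology} and the standard structure of the Koszul complex.
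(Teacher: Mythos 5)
Your proof is correct and follows essentially the same route as the paper: decompose each Koszul term $K_{p+i}(\bF,C)$ as a direct sum of twists of $C$, pass to supports, and plug in the description of $H^{p+1}_{\bb}(C)$ from Proposition~\ref{prop: support cohomology}. The only cosmetic difference is that you carry out the bookkeeping for arbitrary $r$ directly from part~(1) of that proposition, whereas the paper writes out only the $r=2$ case explicitly (re-deriving the decomposition of $H^{\bullet}_{\bb}(C)$ via Mayer--Vietoris) and asserts that the general case is analogous.
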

\begin{proof} We prove this formula in the case $r=2$ as the case $r>2$ goes along the same lines. From Proposition \ref{prop: support cohomology} we deduce
	$$
	Q_{\{1\}}= \Supp(\check{C}_{\{1\}}) = \Supp (H^{n_1+1}_{\bb}(C)), \,\,
	Q_{\{2\}}= \Supp(\check{C}_{\{2\}})= \Supp (H^{n_2+1}_{\bb}(C)).
	$$
	Then, by Mayer-Vietoris exact sequence we obtain the isomorphisms $H^{n_1+1}_{\bb}(C)\cong H^{n_1+1}_{\mm_1}(C)$ and $H^{n_2+1}_{\bb}(C)\cong H^{n_2+1}_{\mm_2}(C)$, unless $n_1=n_2$ in which case $$H^{n_1+1}_{\bb}(C)\cong H^{n_1+1}_{\mm_1}(C)\oplus H^{n_2+1}_{\mm_2}(C).$$ Therefore,  
	\begin{align*}
	\Gamma_i 
	& = \Supp(H^{n_1+1}_{\mm_1}(K_{n_1+i}(\bff,C)))\cup \Supp(H^{n_2+1}_{\mm_2}(K_{n_2+i}(\bff,C)))\\
	&= \left(\sum_{\substack {j\in \lambda, \\ \sharp\lambda = n_1+1}} \bd_j + \Supp (H^{n_1+1}_{\mm_1}(C)) \right) \bigcup  \left( \sum_{\substack {j\in \lambda, \\ \sharp\lambda = n_2+1}} \bd_j + \Supp (H^{n_2+1}_{\mm_2}(C))\right)\\
	& = \left(\sum_{\substack {j\in \lambda, \\ \sharp \lambda = n_1+1}} \bd_j + Q_{1} \right) \bigcup \left(\sum_{\substack {j\in \lambda, \\ \sharp \lambda = n_2+1}} \bd_j + Q_{2}\right).
	\end{align*}
\end{proof}

\begin{exmp} We illustrate graphically Corollary \ref{cor:Gammai} in the case $r=2$ and  $\bd_0=(2,1), \bd_1=(1,1)$ and $\bd_2=(1,1)$ with the two sets of 
variables $\bx_1= (x_{1,0},x_{1,1})$  and $\bx_2= 
(x_{2,0},x_{2,1})$ (geometrically we are over $\PP^{1}\times \PP^{1}$). 
In the following picture the red (resp. yellow, resp. green) region represents the set \red{$\Gamma_0$} (resp. \yellow{$\Gamma_1$}, resp. \green{$\Gamma_2$}).  The grey region is $\bdelta-\NN^r$ that will appear in Corollary \ref{cor- main theorem}.
\medskip
\begin{center}
\definecolor{ffdxqq}{rgb}{1.,0.8431372549019608,0.}
\definecolor{qqffqq}{rgb}{0.,1.,0.}
\definecolor{ffqqqq}{rgb}{1.,0.,0.}
\definecolor{uuuuuu}{rgb}{0.26666666666666666,0.26666666666666666,0.26666666666666666}
\definecolor{ffffqq}{rgb}{1.,1.,0.}
\definecolor{ududff}{rgb}{0.30196078431372547,0.30196078431372547,1.}
\definecolor{cqcqcq}{rgb}{0.7529411764705882,0.7529411764705882,0.7529411764705882}
\begin{tikzpicture}[line cap=round,line join=round,>=triangle 45,x=1.0cm,y=1.0cm]
\draw [color=cqcqcq,, xstep=1.0cm,ystep=1.0cm] (-1.177832074545664,-1.3001504352679525) grid (4.988286119555655,3.4201193546992252);
\draw[->,color=black] (-1.177832074545664,0.) -- (4.988286119555655,0.);
\foreach \x in {-1.,1.,2.,3.,4.}
\draw[shift={(\x,0)},color=black] (0pt,2pt) -- (0pt,-2pt) node[below] {\footnotesize $\x$};
\draw[->,color=black] (0.,-1.3001504352679525) -- (0.,3.4201193546992252);
\foreach \y in {-1.,1.,2.,3.}
\draw[shift={(0,\y)},color=black] (2pt,0pt) -- (-2pt,0pt) node[left] {\footnotesize $\y$};
\draw[color=black] (0pt,-10pt) node[right] {\footnotesize $0$};
\clip(-1.177832074545664,-1.3001504352679525) rectangle (4.988286119555655,3.4201193546992252);
\fill[line width=3.2pt,color=ffffqq,fill=ffffqq,fill opacity=0.10000000149011612] (1.,2.) -- (1.0025348000000003,8.376970400000008) -- (-5.374435600000007,8.379505200000008) -- (-5.376970400000008,2.0025348000000007) -- cycle;
\fill[line width=0.4pt,color=ffqqqq,fill=ffqqqq,fill opacity=0.1] (0.,1.) -- (0.,9.) -- (-8.,9.) -- (-8.,1.) -- cycle;
\fill[line width=2.pt,color=ffqqqq,fill=ffqqqq,fill opacity=0.1] (1.,-1.) -- (1.,-11.) -- (11.,-11.) -- (11.,-1.) -- cycle;
\fill[line width=2.pt,color=ffffqq,fill=ffffqq,fill opacity=0.1] (3.,0.) -- (3.,-8.) -- (11.,-8.) -- (11.,0.) -- cycle;
\fill[line width=2.pt,color=qqffqq,fill=qqffqq,fill opacity=0.1] (2.,3.) -- (2.,12.7382) -- (-7.7382,12.7382) -- (-7.7382,3.) -- cycle;
\fill[line width=2.pt,color=qqffqq,fill=qqffqq,fill opacity=0.1] (4.,1.) -- (4.,-12.) -- (17.,-12.) -- (17.,1.) -- cycle;
\fill[line width=2.pt,color=ffffqq,fill=ffffqq,fill opacity=0.1] (2.,0.) -- (2.,-8.) -- (10.,-8.) -- (10.,0.) -- cycle;
\fill[line width=2pt,color=ffqqqq,fill=ffqqqq,fill opacity=0.1] (-1.,1.) -- (-1.,9.) -- (-9.,9.) -- (-9.,1.) -- cycle;
\fill[line width=2.pt,color=ffdxqq,fill=ffdxqq,fill opacity=0.1] (0.,2.) -- (-0.008254378512182858,8.390202772952907) -- (-6.398457151465088,8.381948394440725) -- (-6.390202772952907,1.9917456214878184) -- cycle;
\fill[line width=2.pt,color=ffqqqq,fill=ffqqqq,fill opacity=0.1] (2.01770297517665,-1.015242770886749) -- (2.01770297517665,-11.015242770886754) -- (12.017702975176654,-11.015242770886754) -- (12.017702975176656,-1.0152427708867506) -- cycle;
\draw [line width=2.pt,color=ffffqq] (1.,2.)-- (1.0025348000000003,8.376970400000008);
\draw [line width=2pt,color=ffffqq] (1.0025348000000003,8.376970400000008)-- (-5.374435600000007,8.379505200000008);
\draw [line width=2pt,color=ffffqq] (-5.374435600000007,8.379505200000008)-- (-5.376970400000008,2.0025348000000007);
\draw [line width=2pt,color=ffffqq] (-5.376970400000008,2.0025348000000007)-- (1.,2.);
\draw [line width=2pt,color=ffqqqq] (0.,1.)-- (0.,9.);
\draw [line width=2pt,color=ffqqqq] (0.,9.)-- (-8.,9.);
\draw [line width=2pt,color=ffqqqq] (-8.,9.)-- (-8.,1.);
\draw [line width=2pt,color=ffqqqq] (-8.,1.)-- (0.,1.);
\draw [line width=2.pt,color=ffqqqq] (1.,-1.)-- (1.,-11.);
\draw [line width=2.pt,color=ffqqqq] (1.,-11.)-- (11.,-11.);
\draw [line width=2.pt,color=ffqqqq] (11.,-11.)-- (11.,-1.);
\draw [line width=2.pt,color=ffqqqq] (11.,-1.)-- (1.,-1.);
\draw [line width=2.pt,color=ffffqq] (3.,0.)-- (3.,-8.);
\draw [line width=2.pt,color=ffffqq] (3.,-8.)-- (11.,-8.);
\draw [line width=2.pt,color=ffffqq] (11.,-8.)-- (11.,0.);
\draw [line width=2.pt,color=ffffqq] (11.,0.)-- (3.,0.);
\draw [line width=2.pt,color=qqffqq] (2.,3.)-- (2.,12.7382);
\draw [line width=2.pt,color=qqffqq] (2.,12.7382)-- (-7.7382,12.7382);
\draw [line width=2.pt,color=qqffqq] (-7.7382,12.7382)-- (-7.7382,3.);
\draw [line width=2.pt,color=qqffqq] (-7.7382,3.)-- (2.,3.);
\draw [line width=2.pt,color=qqffqq] (4.,1.)-- (4.,-12.);
\draw [line width=2.pt,color=qqffqq] (4.,-12.)-- (17.,-12.);
\draw [line width=2.pt,color=qqffqq] (17.,-12.)-- (17.,1.);
\draw [line width=2.pt,color=qqffqq] (17.,1.)-- (4.,1.);
\draw [line width=2.pt,color=ffffqq] (2.,0.)-- (2.,-8.);
\draw [line width=2.pt,color=ffffqq] (2.,-8.)-- (10.,-8.);
\draw [line width=2.pt,color=ffffqq] (10.,-8.)-- (10.,0.);
\draw [line width=2.pt,color=ffffqq] (10.,0.)-- (2.,0.);
\draw [line width=2pt,color=ffqqqq] (-1.,1.)-- (-1.,9.);
\draw [line width=2pt,color=ffqqqq] (-1.,9.)-- (-9.,9.);
\draw [line width=2pt,color=ffqqqq] (-9.,9.)-- (-9.,1.);
\draw [line width=2pt,color=ffqqqq] (-9.,1.)-- (-1.,1.);
\draw [line width=2pt,color=ffdxqq] (0.,2.)-- (-0.008254378512182858,8.390202772952907);
\draw [line width=2pt,color=ffdxqq] (-0.008254378512182858,8.390202772952907)-- (-6.398457151465088,8.381948394440725);
\draw [line width=2pt,color=ffdxqq] (-6.398457151465088,8.381948394440725)-- (-6.390202772952907,1.9917456214878184);
\draw [line width=2pt,color=ffdxqq] (-6.390202772952907,1.9917456214878184)-- (0.,2.);
\draw [line width=2.pt,color=ffqqqq] (2.01770297517665,-1.015242770886749)-- (2.01770297517665,-11.015242770886754);
\draw [line width=2.pt,color=ffqqqq] (2.01770297517665,-11.015242770886754)-- (12.017702975176654,-11.015242770886754);
\draw [line width=2.pt,color=ffqqqq] (12.017702975176654,-11.015242770886754)-- (12.017702975176656,-1.0152427708867506);
\draw [line width=2.pt,color=ffqqqq] (12.017702975176656,-1.0152427708867506)-- (2.01770297517665,-1.015242770886749);
\draw [line width=1.pt,color=gray] (2,1.)-- (2.,-2.);
\draw [line width=1.pt,color=gray] (2,1.)-- (-2.,1.);
\fill[line width=2.pt,color=gray,fill=gray,fill opacity=0.10000000149011612] (2.,-2.) -- (2.,1.) -- (-2.,1.) -- (-2.,-2.) -- cycle;
\begin{scriptsize}
\draw [fill=ududff] (2.,1.) circle (1.0pt);
\draw[color=ududff] (2.2347954087414448,1.1290875084426333) node {$\bdelta$};
\draw [fill=ududff] (0.,1.) circle (1.0pt);
\draw [fill=ududff] (1.,-1.) circle (0.5pt);
\draw [fill=ududff] (1.,2.) circle (1.0pt);
\draw [fill=ududff] (3.,0.) circle (1.0pt);
\draw [fill=ududff] (2.,3.) circle (1.0pt);
\draw [fill=ududff] (4.,1.) circle (1.0pt);
\draw [fill=ududff] (1.0025348000000003,8.376970400000008) circle (2.5pt);
\draw [fill=uuuuuu] (-5.374435600000007,8.379505200000008) circle (2.5pt);
\draw [fill=uuuuuu] (-5.376970400000008,2.0025348000000007) circle (2.5pt);
\draw [fill=ududff] (0.,9.) circle (2.5pt);
\draw[color=ududff] (-1.1459383597485884,3.510484879957606) node {$K$};
\draw [fill=uuuuuu] (-8.,9.) circle (2.5pt);
\draw [fill=uuuuuu] (-8.,1.) circle (2.5pt);
\draw[color=uuuuuu] (-1.1459383597485884,3.510484879957606) node {$M$};
\draw [fill=ududff] (1.,-11.) circle (0.5pt);
\draw [fill=uuuuuu] (11.,-11.) circle (0.5pt);
\draw [fill=uuuuuu] (11.,-1.) circle (0.5pt);
\draw [fill=ududff] (3.,-8.) circle (0.5pt);
\draw [fill=uuuuuu] (11.,-8.) circle (2.5pt);
\draw [fill=uuuuuu] (11.,0.) circle (0.5pt);
\draw [fill=ududff] (2.,12.7382) circle (2.5pt);
\draw [fill=uuuuuu] (-7.7382,12.7382) circle (2.5pt);
\draw [fill=uuuuuu] (-7.7382,3.) circle (2.5pt);
\draw [fill=ududff] (4.,-12.) circle (2.5pt);
\draw [fill=uuuuuu] (17.,-12.) circle (2.5pt);
\draw [fill=uuuuuu] (17.,1.) circle (2.5pt);
\draw [fill=ududff] (2.,0.) circle (1.0pt);
\draw [fill=ududff] (2.,-8.) circle (0.5pt);
\draw [fill=uuuuuu] (10.,-8.) circle (2.5pt);
\draw [fill=uuuuuu] (10.,0.) circle (0.5pt);
\draw [fill=ududff] (-1.,1.) circle (1.0pt);
\draw [fill=ududff] (-1.,9.) circle (2.5pt);
\draw[color=ududff] (-1.1193602640843585,3.5370629756218355) node {$K_1$};
\draw [fill=uuuuuu] (-9.,9.) circle (2.5pt);
\draw [fill=uuuuuu] (-9.,1.) circle (2.5pt);
\draw[color=uuuuuu] (-1.1193602640843585,3.5370629756218355) node {$M_1$};
\draw [fill=ududff] (0.,2.) circle (1.0pt);
\draw [fill=ududff] (-0.008254378512182858,8.390202772952907) circle (2.5pt);
\draw [fill=uuuuuu] (-6.398457151465088,8.381948394440725) circle (2.5pt);
\draw [fill=uuuuuu] (-6.390202772952907,1.9917456214878184) circle (2.5pt);
\draw [fill=ududff] (2.01770297517665,-1.015242770886749) circle (0.5pt);
\draw [fill=ududff] (2.01770297517665,-11.015242770886754) circle (0.5pt);
\draw [fill=uuuuuu] (12.017702975176654,-11.015242770886754) circle (0.5pt);
\draw [fill=uuuuuu] (12.017702975176656,-1.0152427708867506) circle (0.5pt);
\end{scriptsize}
\end{tikzpicture}
\end{center}
\end{exmp}

We are now ready to prove the main result of this section.

\begin{thm}\label{thm: duality}
Let $F_0,\dots ,F_n$  be the $n+1$ generic multihomogeneous polynomials 
of degree $\bd_0, \ldots, \bd_n$, respectively. If $\bmu \notin \Gamma_0\cup \Gamma_1 \subset \ZZ^r$, then 
$$\left( I^{\sat}/I\right)_{\bmu} \simeq \Hom_{A_k}((C/I)_{\bdelta-\bmu},A_k).$$
\end{thm}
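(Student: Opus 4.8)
The plan is to compute the graded pieces of $H^0_{\bb}(B)=(I^{\sat}/I)$ and dualize them using the self-duality of the top local cohomology $H^{n+1}_{\bb}(C)=\check{C}_{\{1,\dots,r\}}$, exactly paralleling the single graded argument behind Proposition \ref{prop: SGduality}. Concretely, since $F_0,\dots,F_n$ form a regular sequence after inverting any product $x_{1,i_1}\cdots x_{r,i_r}$ (Lemma \ref{lem-reqular-sequence}), the Koszul complex $K_\bullet(\bF,C)$ is a free resolution of $B$ ``away from $\bb$''. I would run the two hyper(co)homology spectral sequences of the double complex $\check{\mathcal C}^\bullet_{\bb}\otimes K_\bullet(\bF,C)$, where $\check{\mathcal C}^\bullet_{\bb}$ is the (extended) \v{C}ech complex on $\bb$. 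One spectral sequence has $E_1$-page involving $H^j_{\bb}(K_p(\bF,C))=\bigoplus_{\sharp\lambda=p}H^j_{\bb}(C)(-\sum_{i\in\lambda}\bd_i)$, and Proposition \ref{prop: support cohomology} identifies these as sums of shifted $\check{C}_\alpha$'s; the sets $\Gamma_i$ were precisely engineered so that in a degree $\bmu\notin\Gamma_0\cup\Gamma_1$ all the potentially interfering terms $H^{p+1}_{\bb}(K_{p}(\bF,C))$ and $H^{p+1}_{\bb}(K_{p+1}(\bF,C))$ (for $-1\le p\le n-1$) vanish. The other spectral sequence has $E_2$-page $H^i_{\bb}(H_j(K_\bullet(\bF,C)))=H^i_{\bb}(H_j(\bF,C))$, and since the Koszul homology $H_j(\bF,C)$ is $\bb$-torsion for $j\ge 1$ (again by Lemma \ref{lem-reqular-sequence}, as the sequence is regular on the complement of $V(\bb)$), this collapses to contributions from $H_0=B$ and from $H^0_{\bb}$ of the higher homologies.

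First I would isolate the degree-$\bmu$ component of $H^0_{\bb}(B)$ and chase it through the first spectral sequence: the vanishing of the $\Gamma_0,\Gamma_1$ terms forces all differentials into and out of the relevant spot to be zero, so $H^0_{\bb}(B)_\bmu$ is computed by a single strand of the complex, namely the one producing $H^{n+1}_{\bb}(C)$ at the end of the Koszul complex. Since $K_{n+1}(\bF,C)=C(-\sum_i\bd_i)$, we get $H^{n+1}_{\bb}(K_{n+1}(\bF,C))=\check{C}_{\{1,\dots,r\}}(-\sum_i\bd_i)$, and dualizing the whole Koszul complex — using that $K_\bullet(\bF,C)$ is self-dual up to the twist by $\det=(-\sum_i\bd_i)$ — turns the relevant piece of the \v{C}ech-Koszul complex into the dual of $K_\bullet(\bF,C)\otimes A_k$ in complementary degree. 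The graded local duality statement $\bigl(\check{C}_j\bigr)_{\nu}\cong \Hom_{A_k}\bigl((C_j)_{-\nu-(n_j+1)},A_k\bigr)$ (the multigraded analogue of $H^{n+1}_{\mm}$ being the graded $A_k$-dual of $C$ shifted by $-(n+1)$) then converts $\check{C}_{\{1,\dots,r\}}$-coefficients in degree $\bmu$ into $A_k$-linear functionals on $C$ in degree $\sum_i\bd_i-(n_1+1,\dots,n_r+1)-\bmu=\bdelta-\bmu$; combining with the dualized Koszul differential identifies the result with $\Hom_{A_k}\bigl((C/I)_{\bdelta-\bmu},A_k\bigr)$.

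The main obstacle, I expect, is bookkeeping the two spectral sequences carefully enough to be sure that \emph{no} other term survives in total degree $\bmu$ and that the surviving term is exactly the one claimed — in particular tracking which Koszul degree $p$ contributes (and why only $\alpha=\{1,\dots,r\}$, i.e.\ the genuinely "top" cohomology, is left, rather than the mixed $\check{C}_\alpha$ for proper subsets $\alpha$). The design of $\Gamma_i$ as a union over $1\le\sharp\alpha\le r-1$ handles the mixed terms, but one must check the edge cases $p=-1$ (giving $H^0_\bb$, which vanishes since $C$ has depth) and $p=n$, and reconcile the two strands of the double complex to pin down the duality \emph{map} rather than merely an abstract isomorphism of $A_k$-modules. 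A secondary technical point is justifying the graded local duality isomorphism over the non-Noetherian-looking but in fact polynomial ring $A_k$ and checking that all the modules in sight are graded-free over $A_k$ in the relevant degrees, so that $\Hom_{A_k}(-,A_k)$ behaves well; this should follow because each graded piece of $C$ and of $\check C_\alpha$ is a free $A_k$-module of finite rank, but it needs to be said. Once these are in place, the isomorphism drops out of the comparison of the two spectral sequences, and specializing it to the diagonal recovers both the rank-one statement at $\bmu=\bdelta$ and the single graded Proposition \ref{prop: SGduality} when $r=1$.
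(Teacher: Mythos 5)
Your proposal is correct and follows essentially the same route as the paper: the two spectral sequences of the \v{C}ech--Koszul double complex $C^\bullet_{\bb}(K_\bullet(\bF,C))$, with $\bmu\notin\Gamma_0$ killing the targets $(^vE^2_{p+1,p})_\bmu$ of the differentials out of the corner $(n+1,n+1)$, $\bmu\notin\Gamma_1$ killing the other diagonal entries $(^vE^\infty_{p,p})_\bmu$, the identification $^vE^2_{n+1,n+1}\cong B^\star(\bdelta)$ via Koszul self-duality and graded local duality for $H^{n+1}_{\bb}(C)$, and the horizontal spectral sequence (using that $H_j(K_\bullet)$ is $\bb$-torsion for $j\ge1$, so $H^1_{\bb}(H_1(K_\bullet))=0$) to match the abutment with $H^0_{\bb}(B)=I^{\sat}/I$. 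The only small cosmetic difference is that the paper routes the local-duality step through a Mayer--Vietoris identification $H^{n+1}_{\bb}\cong H^{n+r}_{\mm}$ before dualizing, whereas you apply duality factor by factor for each $\check{C}_j$; both are equivalent, and your caveats about which strand survives and about behavior of $\Hom_{A_k}(-,A_k)$ on the finitely generated free graded pieces are precisely the bookkeeping the paper's proof performs.
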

\begin{proof}
We proceed by analyzing  the spectral sequences associated with the \v{C}ech-Koszul double complex  ${C}^{\bullet}_{\bb}({K}_{\bullet}(\bF,C))$.  If we 
start  taking homologies vertically, in the second page we get
\[ ^vE^{2}_{p,q} \left\{ \begin{array}{ll}
         H_q(H^p_{\bb}({K}_\bullet(\bF,C)))& \mbox{if $0\leq p\leq n+1 \textrm{ and } 0\leq q\leq n+1,  $}\\
       0 & \mbox {otherwise} 
       .\end{array} \right. \] 
If $p= n+1$ and $q=n+1$, then by Mayer-Vietoris exact sequence, 
\begin{align*}
 ^vE^{2}_{n+1, n+1}&\cong H_{n+1}(H^{n+1}_{\bb}(K_\bullet(\bF,C)))\\
&\cong H_{n+1}\left( H^{n+r}_{\mm}\left(K_\bullet(\bF,C)\right)\right)\\
&\cong H_{n+1}
( K_\bullet(\bF,C)\left(\bdelta\right)^{\star})\\
&\cong B^{\star}(\bdelta).
\end{align*}
The notation $(-)^\star$ stands, classically, for the graded dual (see \cite[\S 1.5, p.~33]{BrHe93} for more details). For instance, $B^\star=\oplus_{\bnu}\Hom_A(B_{-\bnu},A)$.

By definition, if $\bmu\notin \Gamma_0$ then for $0\leq p\leq n$, $\left( H^{p+1}_{\bb}(K_{p}(\bF,C))\right)_{\bmu}=0$, which means $ \left( ^vE^{2}_{p+1,p}\right)_{\bmu}=0$. The maps from  $^vE^{2}_{n+1, n+1}$ in the next pages are to $^vE^{2}_{p+1, p}$ for $0\leq p\leq n$ and no nonzero map points to $^vE^{\ell}_{n+1, n+1}$ for $\ell\geq 2$. It follows that 
$$
\left( ^vE^{2}_{n+1, n+1}\right)_{\bmu}\cong \left(^vE^{\infty}_{n+1, n+1}\right)_{\bmu}.
$$
If $\bmu\notin \Gamma_1$ then $\left( H^{p}_{\bb}(K_{p}(\bF,C))\right)_{\bmu}=0$ for $0\leq p\leq n$. Hence, $ \left( ^vE^{\infty}_{p,p}\right)_{\bmu}=0$ for $p\neq n+1$ and $( ^vE^{\infty}_{n+1, n+1})_{\bmu}\cong 
B^{\star}_{\bdelta-\bmu}=\Hom_A(B_{\bdelta-\bmu},A)$. 

If we start taking homology horizontally, the second page of the spectral sequence is:
$$\begin{matrix}
\ast&\cdots&\ast &H_1(K_\bullet(\bF,C))&I^{\sat}/I\\
0&\cdots&0&0&\ast\\
\vdots&\cdots&\vdots&\vdots&\vdots\\
0&\cdots&0&0&\ast.\\
\end{matrix}$$
Notice that vanishing of $H^1_{\bb}(H_1(K_\bullet(\bF,C)))$ follows from Lemma \ref{lem-reqular-sequence}. Finally, the claimed assertion follows from comparing the two spectral sequences. 
\end{proof}

We now derive some consequences of the above duality result.

\begin{cor}\label{cor- main theorem} $\left(I^{\sat}/I\right)_{\bdelta}\simeq A_k$ and $\left(I^{\sat}/I\right)_\bnu=0$ for all $\bnu\notin   \left(\bdelta-\NN^r \right)\cup \Gamma_0\cup \Gamma_1$.
\end{cor}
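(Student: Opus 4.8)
The plan is to deduce both statements from Theorem \ref{thm: duality} together with the description of $\Gamma_0\cup\Gamma_1$ provided by Corollary \ref{cor:Gammai} and the support computation in Example \ref{ex: top cohomology}. For the first assertion, I would specialize Theorem \ref{thm: duality} to $\bmu=\bdelta$. One first needs to check that $\bdelta\notin\Gamma_0\cup\Gamma_1$; this should follow from Corollary \ref{cor:Gammai} since each piece of $\Gamma_i$ has the form $\sum_{j\in\lambda}\bd_j+Q_\alpha$ with $\alpha$ a \emph{proper} nonempty subset of $\{1,\dots,r\}$, so at least one coordinate $j\notin\alpha$ has $\mathrm{sgn}_\alpha(j)=0$, forcing that coordinate of any element of $\sum_{j\in\lambda}\bd_j+Q_\alpha$ to be at most $|\lambda|_j$-bounded in a way incompatible with $\delta_j$ — more precisely, in the $j\notin\alpha$ coordinate an element of $Q_\alpha$ ranges over $\NN$, but the complementary coordinates of $Q_\alpha$ are bounded above by $-(n_{j'}+1)$, and adding $\sum_{j\in\lambda}\bd_j$ with $\sharp\lambda=n(\alpha)+i\leq n$ cannot reach $\delta_{j'}=\sum_{i=0}^n d_{i,j'}-(n_{j'}+1)$ in those coordinates. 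Granting $\bdelta\notin\Gamma_0\cup\Gamma_1$, Theorem \ref{thm: duality} gives $(I^{\sat}/I)_{\bdelta}\simeq\Hom_{A_k}((C/I)_{\b0},A_k)$, and since $(C/I)_{\b0}=A_k$ (the degree-$\b0$ part of $C=A_k[\bx_1,\dots,\bx_r]$ is $A_k$, and $I$ has no elements of multidegree $\b0$ because every $F_i$ has positive degree in each $\bx_j$), we get $\Hom_{A_k}(A_k,A_k)\simeq A_k$.

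For the second assertion, I would again invoke Theorem \ref{thm: duality}: for $\bnu\notin\Gamma_0\cup\Gamma_1$ we have $(I^{\sat}/I)_{\bnu}\simeq\Hom_{A_k}((C/I)_{\bdelta-\bnu},A_k)$, so it suffices to show that $(C/I)_{\bdelta-\bnu}=0$ whenever $\bnu\notin\bdelta-\NN^r$, i.e.~whenever $\bdelta-\bnu\notin\NN^r$. But $(C/I)_{\bmu}$ is a quotient of $C_{\bmu}=(A_k[\bx_1,\dots,\bx_r])_{\bmu}$, which is zero as soon as some coordinate of $\bmu$ is negative, since $C$ is $\NN^r$-graded. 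Hence if $\bdelta-\bnu$ has a negative coordinate then $(C/I)_{\bdelta-\bnu}=0$ and the Hom is zero. Combining, $(I^{\sat}/I)_{\bnu}=0$ for all $\bnu\notin(\bdelta-\NN^r)\cup\Gamma_0\cup\Gamma_1$.

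The main obstacle, and the only step requiring genuine care, is the combinatorial verification that $\bdelta\notin\Gamma_0\cup\Gamma_1$ for the first statement — in fact one wants the slightly sharper fact that $\bdelta$ lies on the ``corner'' of the region where the duality is nontrivial. I would handle this by writing out, via Proposition \ref{prop: support cohomology}(2), the explicit form of $Q_\alpha$ for a proper nonempty $\alpha$: picking any index $j_0\notin\alpha$, the $j_0$-th coordinate of any point in $\sum_{j\in\lambda}\bd_j+Q_\alpha$ equals $\sum_{j\in\lambda}d_{j,j_0}+m$ for some $m\in\NN$, while picking any index $j_1\in\alpha$ (such exists since $\alpha\neq\emptyset$), the $j_1$-th coordinate is $\sum_{j\in\lambda}d_{j,j_1}-(n_{j_1}+1)-m'$ with $m'\in\NN$; then using $\sharp\lambda=n(\alpha)+i\leq n(\alpha)+2$ and comparing against $\delta_{j_1}=\sum_{i=0}^n d_{i,j_1}-(n_{j_1}+1)$, one sees that matching the $j_1$-coordinate to $\delta_{j_1}$ would force $\lambda$ to omit at most finitely many of the $F_i$ with small $d_{\cdot,j_1}$, which then makes the $j_0$-coordinate fall strictly below $\delta_{j_0}$. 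This bookkeeping is routine but must be done cleanly; everything else follows formally from the already-established Theorem \ref{thm: duality}, the grading of $C$, and the computation $(C/I)_{\b0}=A_k$.
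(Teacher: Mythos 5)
Your overall route is the same as the paper's: both assertions are read off from Theorem~\ref{thm: duality} once one knows $\bdelta\notin\Gamma_0\cup\Gamma_1$, using $(C/I)_{\b0}=A_k$ for the first claim and the vanishing of $C_{\bmu}$ for $\bmu\notin\NN^r$ for the second. The second assertion is handled correctly. However, the combinatorial verification that $\bdelta\notin\Gamma_0\cup\Gamma_1$ — which you rightly identify as the only nontrivial step — is not sound as written.

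Two problems. First, you use the bound $\sharp\lambda\leq n(\alpha)+2$, but for $\Gamma_0\cup\Gamma_1$ the index $i$ ranges only over $\{0,1\}$, giving $\sharp\lambda\leq n(\alpha)+1$. Since $\alpha$ is a proper nonempty subset of $\{1,\dots,r\}$, $n(\alpha)\leq n-1$, hence $\sharp\lambda\leq n<n+1$, so $\lambda\subsetneq\{0,\dots,n\}$. With the looser bound $n(\alpha)+2$ one could have $\sharp\lambda=n+1$, which is precisely the borderline case your argument would need to rule out some other way — and this is presumably why you felt compelled to also look at the $j_0$-coordinate. Second, the $j_0$-coordinate argument is incorrect: for $j_0\notin\alpha$, Proposition~\ref{prop: support cohomology}(2) says that coordinate of $Q_\alpha$ ranges over all of $\NN$, so the $j_0$-coordinate of $\sum_{j\in\lambda}\bd_j+Q_\alpha$ is $\sum_{j\in\lambda}d_{j,j_0}+m$ with $m\in\NN$ unbounded above. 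It therefore does \emph{not} ``fall strictly below $\delta_{j_0}$''; in fact it can be made as large as one wishes. The $j_0$-coordinate gives no obstruction at all.

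The clean argument uses only the $j_1$-coordinate, $j_1\in\alpha$: any element of $\sum_{j\in\lambda}\bd_j+Q_\alpha$ has $j_1$-coordinate at most $\sum_{j\in\lambda}d_{j,j_1}-(n_{j_1}+1)$, and since $\lambda\subsetneq\{0,\dots,n\}$ (by $\sharp\lambda\leq n$) and every $d_{j,j_1}\geq 1$, this is strictly less than $\sum_{j=0}^n d_{j,j_1}-(n_{j_1}+1)=\delta_{j_1}$. Hence $\bdelta$ lies in none of the sets $\sum_{j\in\lambda}\bd_j+Q_\alpha$ comprising $\Gamma_0\cup\Gamma_1$. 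With this repair, the rest of your proof — which follows the paper's intended ``direct consequence of Theorem~\ref{thm: duality}'' — is correct.
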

\begin{proof} This is a direct consequence of Theorem \ref{thm: duality} as $\bdelta\notin \Gamma_i$ for any $i$.
\end{proof}

 \begin{cor}\label{cor: syz}
 Let $F_0,\dots, F_n$ be the $n+1$ generic multihomogeneous polynomials. If $\bmu \notin \Gamma_2$ then $H_1({K}_\bullet(\bF,C))_{\bmu}=0$. In other words, any syzygy of $F_0,\ldots,F_{n}$ of degree $\bmu$ is a Koszul syzygy. 
 \end{cor}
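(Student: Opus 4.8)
The plan is to run once more the two spectral sequences of the \v{C}ech--Koszul double complex $C^{\bullet}_{\bb}(K_{\bullet}(\bF,C))$ used in the proof of Theorem \ref{thm: duality}, but now reading off the hypercohomology $\mathbb{H}^{\bullet}_{\bb}(K_{\bullet}(\bF,C))$ in total degree $-1$ rather than $0$. Recall from that proof that the ``vertical'' spectral sequence has second page $^vE^2_{p,q}=H_q(H^p_{\bb}(K_\bullet(\bF,C)))$, that the ``horizontal'' one has second page $H^p_{\bb}(H_q(K_\bullet(\bF,C)))$, and that an entry in position $(p,q)$ of either page contributes to $\mathbb{H}^{p-q}_{\bb}(K_\bullet(\bF,C))$.

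First I would look at the horizontal spectral sequence. Exactly as in the proof of Theorem \ref{thm: duality}, Lemma \ref{lem-reqular-sequence} together with the fact that the open sets $D(x_{1,i_1}\cdots x_{r,i_r})$ cover $\Spec(C)\setminus V(\bb)$ shows that the Koszul homology modules $H_q(K_\bullet(\bF,C))$ are $\bb$-torsion for every $q\geq 1$, so that $H^p_{\bb}(H_q(K_\bullet(\bF,C)))=0$ whenever $p\geq 1$ and $q\geq 1$. Hence, among the entries of the horizontal $E_2$-page contributing to $\mathbb{H}^{-1}_{\bb}(K_\bullet(\bF,C))$, the only nonzero one is $H^0_{\bb}(H_1(K_\bullet(\bF,C)))=H_1(K_\bullet(\bF,C))$, and no higher differential enters or leaves this spot (the outgoing ones have target a subquotient of $H^r_{\bb}(H_r(K_\bullet(\bF,C)))=0$). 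This yields a canonical isomorphism $\mathbb{H}^{-1}_{\bb}(K_\bullet(\bF,C))\cong H_1(K_\bullet(\bF,C))$.

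Then I would turn to the vertical spectral sequence, which presents the same module $\mathbb{H}^{-1}_{\bb}(K_\bullet(\bF,C))$ as a successive extension of the terms $^vE^{\infty}_{p,p+1}$ for $0\leq p\leq n$. Now $^vE^2_{p,p+1}=H_{p+1}(H^p_{\bb}(K_\bullet(\bF,C)))$ is a subquotient of $H^p_{\bb}(K_{p+1}(\bF,C))$: the term $p=0$ vanishes because $H^0_{\bb}(C)=0$, and for $1\leq p\leq n$ one has $\Supp(H^p_{\bb}(K_{p+1}(\bF,C)))\subseteq\Gamma_2$ by the very definition of $\Gamma_2$. Therefore, if $\bmu\notin\Gamma_2$, each $^vE^2_{p,p+1}$ -- and a fortiori each $^vE^{\infty}_{p,p+1}$ -- vanishes in degree $\bmu$, whence $(\mathbb{H}^{-1}_{\bb}(K_\bullet(\bF,C)))_{\bmu}=0$. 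Combining this with the isomorphism of the previous paragraph gives $H_1(K_\bullet(\bF,C))_{\bmu}=0$. The reformulation in terms of syzygies is then immediate, since $H_1(K_\bullet(\bF,C))$ is by definition the module of syzygies of $F_0,\ldots,F_n$ modulo its submodule of Koszul syzygies.

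In short, the proof is essentially the proof of Theorem \ref{thm: duality} read off one degree lower, and I do not anticipate a real difficulty. The only point requiring some care is the clean identification $\mathbb{H}^{-1}_{\bb}(K_\bullet(\bF,C))\cong H_1(K_\bullet(\bF,C))$ in the first step, which hinges on the vanishing of the entire column below $H_1(K_\bullet(\bF,C))$ on the horizontal page, that is, on the global $\bb$-torsion of all the $H_q(K_\bullet(\bF,C))$ with $q\geq 1$ already recorded in the proof of Theorem \ref{thm: duality}.
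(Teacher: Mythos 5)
Your proof is correct, and it is the same argument as the paper's: compare the two spectral sequences of the \v{C}ech--Koszul double complex in total degree $-1$, identify the abutment there as $H_1(K_\bullet(\bF,C))$ from the horizontal side (using Lemma~\ref{lem-reqular-sequence} to see that the $H_q$, $q\geq 1$, are $\bb$-torsion), and bound its support by $\Gamma_2$ from the vertical side. You are a bit more explicit than the paper about why no nontrivial differentials touch the spot carrying $H_1(K_\bullet(\bF,C))$; the paper instead stresses the single observation it regards as the only non-formal point, namely that $^vE^2_{n+1,n+2}=H_{n+2}\bigl(H^{n+1}_\bb(K_\bullet(\bF,C))\bigr)=0$ because the Koszul complex has length $n+1$ (this is what lets you restrict $p$ to the range $0\leq p\leq n$, as you tacitly did).
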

 \begin{proof}
 We follow the same lines of the proof of Theorem \ref{thm: duality}.  Considering the two spectral sequences associated with the \v{C}ech-Koszul double complex  ${C}^{\bullet}_{\bb}({K}_{\bullet}(\bF,C))$. As  the length of the Koszul complex is equal to $n+1$, the vanishing of $H_{n+2}(H^{n+1}_{\bb}({K}_\bullet(\bF,C)))$ implies that 
 $$
\Supp\left( H_1({K}_\bullet(\bF,C))\right) \subseteq \bigcup_{-1\leq p\leq n-1 }  \Supp \left(H^{p+1}_{\bb}\left(K_{p+2}\left(\bF,C\right)\right)\right) = \Gamma_2.
 $$
 \end{proof}

 \begin{cor}\label{cor: elimination ideal}
Let $0\leq m\leq n-1$, $G_0,\dots, G_m$ be the $m+1$ generic multihomogeneous polynomials and $I$ be the ideal generated by the $G_i$'s. If $\bmu\notin \Gamma_1$ then $I^{\sat}_{\bmu} = I_{\bmu}$.  In particular, the elimination ideal $\Af=(I:\bb^\infty)\cap A_k$ is equal to $0$.
 \end{cor}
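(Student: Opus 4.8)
The plan is to run, essentially word for word, the double-complex argument of the proof of Theorem~\ref{thm: duality} (and of Corollary~\ref{cor: syz}), the only change being that the Koszul complex now has length $m+1\le n$ instead of $n+1$. Write $\mathbf{G}:=(G_0,\dots,G_m)$ and, for $\lambda\subseteq\{0,\dots,m\}$, $\bd_\lambda:=\sum_{j\in\lambda}\bd_j$; recall that $I^{\sat}/I=H^0_{\bb}(C/I)$, so that $I^{\sat}_{\bmu}=I_{\bmu}$ is equivalent to $H^0_{\bb}(C/I)_{\bmu}=0$. Here $\Gamma_1\subset\ZZ^r$ denotes the region of Corollary~\ref{cor:Gammai} formed with the $m+1$ degrees $\bd_0,\dots,\bd_m$, i.e.\ with $\lambda$ ranging over subsets of $\{0,\dots,m\}$. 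I would study the two spectral sequences of the \v{C}ech--Koszul bicomplex $C^{\bullet}_{\bb}(K_{\bullet}(\mathbf{G},C))$, indexing the summand $C^p_{\bb}(K_j(\mathbf{G},C))$ by total degree $p-j$.

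First I would record the one input that transfers verbatim. The proof of Lemma~\ref{lem-reqular-sequence} is an induction on the number of generic polynomials, so $G_0,\dots,G_m$ form a regular sequence in every localization $C_{x_{1,i_1}\cdots x_{r,i_r}}$; hence $H_q(K_{\bullet}(\mathbf{G},C))$ is $\bb$-torsion for all $q\ge 1$. Taking Koszul homology of the bicomplex first, this makes every $E^2$-term of total degree $0$ except $H^0_{\bb}(H_0(K_{\bullet}(\mathbf{G},C)))=H^0_{\bb}(C/I)=I^{\sat}/I$ vanish, as well as all differentials touching the latter; consequently $H^0$ of the total complex equals $I^{\sat}/I$. (This is exactly the ``horizontal'' part of the argument in the proof of Theorem~\ref{thm: duality}.)

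The heart of the matter is the other spectral sequence, $^vE^2_{p,q}=H_q\bigl(H^p_{\bb}(K_{\bullet}(\mathbf{G},C))\bigr)$ with $q$ the Koszul homological index, whose diagonal terms $p=q$ are the ones filtering $H^0$ of the total complex. By Proposition~\ref{prop: support cohomology}, $H^p_{\bb}(C)\cong\bigoplus_{n(\alpha)+1=p}\check{C}_\alpha$ (sum over nonempty $\alpha\subseteq\{1,\dots,r\}$), so $H^p_{\bb}(K_{\bullet}(\mathbf{G},C))\cong\bigoplus_{n(\alpha)+1=p}K_{\bullet}(\mathbf{G},\check{C}_\alpha)$ and $^vE^2_{p,p}=\bigoplus_{n(\alpha)+1=p}H_p(K_{\bullet}(\mathbf{G},\check{C}_\alpha))$, each summand being a subquotient of $K_p(\mathbf{G},\check{C}_\alpha)=\bigoplus_{\sharp\lambda=p}\check{C}_\alpha(-\bd_\lambda)$ and hence supported inside $\bigcup_{\sharp\lambda=p}(\bd_\lambda+Q_\alpha)$. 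The decisive point — and the only genuine difference with Theorem~\ref{thm: duality} — is that $K_{\bullet}(\mathbf{G},-)$ has length $m+1$, so only $p\le m+1$ contribute, and then $n(\alpha)=p-1\le m<n$, which forces every $\alpha$ occurring to be a \emph{proper} nonempty subset of $\{1,\dots,r\}$. Thus $\Supp({}^vE^2_{p,p})\subseteq\bigcup_{1\le\sharp\alpha\le r-1,\ \sharp\lambda=n(\alpha)+1}(\bd_\lambda+Q_\alpha)\subseteq\Gamma_1$ for every $p$. In Theorem~\ref{thm: duality} the surviving diagonal term was $^vE^2_{n+1,n+1}=B^{\star}(\bdelta)$, which came precisely from the full index $\alpha=\{1,\dots,r\}$ (roughly, from $H^{n+1}_{\bb}(K_{n+1}(\bF,C))$, the term excluded from $\Gamma_1$) and produced the duality; with $m+1\le n$ that index cannot be reached by a nonzero Koszul homology, so no such leftover survives. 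Since $^vE^{\infty}_{p,p}$ is a subquotient of $^vE^2_{p,p}$ and the differentials preserve the $\ZZ^r$-grading, $\bigl({}^vE^{\infty}_{p,p}\bigr)_{\bmu}=0$ for all $p$ whenever $\bmu\notin\Gamma_1$; combined with the previous paragraph this gives $H^0_{\bb}(C/I)_{\bmu}=0$, i.e.\ $I^{\sat}_{\bmu}=I_{\bmu}$, for every $\bmu\notin\Gamma_1$.

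Finally, for the last assertion I would check $\mathbf{0}\notin\Gamma_1$. For a proper nonempty $\alpha\subset\{1,\dots,r\}$ pick $j_0\notin\alpha$; by Proposition~\ref{prop: support cohomology}(2) the $j_0$-coordinate of every element of $Q_\alpha$ lies in $\N$, while that of $\bd_\lambda$ equals $\sum_{j\in\lambda}d_{j,j_0}\ge\sharp\lambda=n(\alpha)+1\ge 2$, so every point of $\bd_\lambda+Q_\alpha$ has positive $j_0$-coordinate and is in particular nonzero. Hence $I^{\sat}_{\mathbf{0}}=I_{\mathbf{0}}$, and since each $\bd_i$ has all entries $\ge1$ we have $I_{\mathbf{0}}=0$; therefore $\Af=(I:\bb^{\infty})\cap A_k=I^{\sat}\cap A_k=I^{\sat}_{\mathbf{0}}=0$. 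The main obstacle in this plan is the bookkeeping of the third paragraph: one must be certain that shortening the Koszul complex genuinely removes the full index $\alpha=\{1,\dots,r\}$ — equivalently, the duality term of Theorem~\ref{thm: duality} — from the diagonal of the vertical spectral sequence; granting that, the rest is the comparison of spectral sequences already carried out for Theorem~\ref{thm: duality} and Corollary~\ref{cor: syz}.
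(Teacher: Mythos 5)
Your argument is correct and is essentially the paper's proof: both run the two \v{C}ech--Koszul spectral sequences for $C^{\bullet}_{\bb}(K_{\bullet}(\mathbf{G},C))$, observe that the shortened Koszul complex ($m+1\le n$ terms) kills the $p=n+1$ diagonal term (equivalently, excludes $\alpha=\{1,\dots,r\}$), so that $\Supp(I^{\sat}/I)\subseteq\Gamma_1$, and then conclude by checking $\mathbf{0}\notin\Gamma_1$. Your version merely spells out a few steps (the reduction to the diagonal of ${}^vE^2$, and the coordinate check showing $\mathbf{0}\notin\Gamma_1$) that the paper states more tersely.
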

 \begin{proof}
Let $\textbf{G}= \{ G_0,\dots, G_m \}$ and consider the two spectral sequences associated with the \v{C}ech-Koszul double complex  ${C}^{\bullet}_{\bb}({K}_{\bullet}(\textbf{G},C))$. Since $^v E^2_{n+1,n+1}=0$ we deduce that
$$
\Supp \left(I^{\sat}/I\right) \subseteq  \bigcup_{-1\leq p\leq n-1 }  \Supp \left(H^{p+1}_{\bb}\left(K_{p+1}\left(\textbf{G},C\right)\right)\right) = \Gamma_1.$$
From here, the conclusion follows as $0 \notin \Gamma_\ell$, $\ell=0,1,2$, since $d_{i,j}\geq 1$ for all $i,j$ and one coordinate of a nonzero element in any $\Gamma_\ell$ is at least equal to one of the $d_{i,j}$'s.
 \end{proof}

In the case $r=2$ the combinatorial complexity in the control of the vanishing of the local cohomology modules of the Koszul homology stays reasonable and a more precise result than Theorem \ref{thm: duality} can be stated.

\begin{prop}\label{prop: Duality}
 With the same assumption as in Theorem \ref{thm: duality}, assume that $r=2$. If $\bmu \in \Gamma_1 \setminus \left( \Gamma_0 \cup \Gamma_2\right)$ then 
	$$ \left( I^{\sat}/I\right)_{\bmu} = (C/I)^{\star}_{\bdelta-\bmu} \oplus \left( H^{n_1+1}_{\mm_1}(K_{n_1+1}(\bF,C))\right)_{\bmu}\oplus \left(H^{n_2+1}_{\mm_2}(K_{n_2+1}(\bF,C))\right)_{\bmu}. $$
\end{prop}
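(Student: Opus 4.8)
The plan is to run exactly the same double-complex argument as in the proof of Theorem \ref{thm: duality}, but to keep track of one extra term that survives on each of the two pages of the vertical spectral sequence when $\bmu$ is allowed to lie in $\Gamma_1$. Concretely, I would again consider the two spectral sequences of the \v{C}ech--Koszul double complex $C^{\bullet}_{\bb}(K_{\bullet}(\bF,C))$. As in Theorem \ref{thm: duality}, the horizontal filtration gives (using Lemma \ref{lem-reqular-sequence} to kill $H^1_{\bb}(H_1(K_\bullet(\bF,C)))$ and using $\bmu\notin\Gamma_0$ to control the bottom row) that the $(n+1,n+1)$ entry of the abutment in degree $\bmu$ is exactly $(I^{\sat}/I)_{\bmu}$. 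So everything reduces to computing $(^vE^{\infty}_{n+1,n+1})_{\bmu}$ from the vertical spectral sequence.

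For the vertical side, recall $^vE^2_{p,q}=H_q(H^p_{\bb}(K_\bullet(\bF,C)))$. The hypothesis $\bmu\notin\Gamma_0$ again forces $(^vE^2_{p+1,p})_{\bmu}=0$ for $0\leq p\leq n$, so no differential can leave the position $(n+1,n+1)$, and as before $(^vE^2_{n+1,n+1})_{\bmu}\cong(^vE^{\infty}_{n+1,n+1})_{\bmu}$; moreover $^vE^2_{n+1,n+1}\cong B^{\star}(\bdelta)$ by Mayer--Vietoris, so its degree-$\bmu$ part is $(C/I)^{\star}_{\bdelta-\bmu}$. The difference with Theorem \ref{thm: duality} is on the antidiagonal positions $(^vE^2_{p,p})_{\bmu}=H_p(H^p_{\bb}(K_\bullet(\bF,C)))_{\bmu}$: these were killed there by $\bmu\notin\Gamma_1$, but now $\bmu\in\Gamma_1$ is allowed. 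Using $r=2$ and Corollary \ref{cor:Gammai}, $\Gamma_1$ splits as the union of the two pieces coming from $H^{n_1+1}_{\mm_1}$ (via the subset $\alpha=\{1\}$, contributing in Koszul homological degree $n_1+1$) and from $H^{n_2+1}_{\mm_2}$ (via $\alpha=\{2\}$, in degree $n_2+1$). Thus on the antidiagonal only the positions $(n_1+1,n_1+1)$ and $(n_2+1,n_2+1)$ can carry a nonzero $\bmu$-component, namely $\big(H^{n_1+1}_{\mm_1}(K_{n_1+1}(\bF,C))\big)_{\bmu}$ and $\big(H^{n_2+1}_{\mm_2}(K_{n_2+1}(\bF,C))\big)_{\bmu}$ respectively. (Here I also use Proposition \ref{prop: support cohomology}(1) and Mayer--Vietoris to identify the $\bb$-local cohomology of $C$ with the corresponding $\mm_j$-local cohomologies in these degrees.)

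The remaining point is to check that these three surviving groups at $(n+1,n+1)$, $(n_1+1,n_1+1)$, $(n_2+1,n_2+1)$ assemble, on the abutment in degree $\bmu$, into a \emph{direct sum} rather than being linked by an extension or by a later differential. For the differentials: the only potential differentials hitting or leaving the antidiagonal positions land on positions of the form $(p\pm s,\, p\pm(s-1))$, i.e.\ on the subdiagonal/superdiagonal positions $(p+1,p)$ or $(p-1,p)$-type spots, whose $\bmu$-components vanish because $\bmu\notin\Gamma_0\cup\Gamma_2$ (this is exactly why $\Gamma_2$ enters the hypothesis); so all three entries are already stable at $^vE^2$ in degree $\bmu$. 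For the extension problem: the abutment $H_{n+1}$ of the double complex is filtered with graded pieces $(^vE^{\infty}_{p,p})_{\bmu}$, and over the polynomial ring $A_k$ these are, in the relevant degree, free (indeed $(C/I)^{\star}_{\bdelta-\bmu}$ is $A_k$-free by Theorem \ref{thm: duality}'s argument, and the two Koszul-homology local cohomology pieces are computed from complexes of free $A_k$-modules), so the filtration splits. Assembling, $(I^{\sat}/I)_{\bmu}=(^vE^{\infty}_{n+1,n+1})_{\bmu}\oplus(^vE^{\infty}_{n_1+1,n_1+1})_{\bmu}\oplus(^vE^{\infty}_{n_2+1,n_2+1})_{\bmu}$, which is the claimed formula.

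I expect the main obstacle to be the bookkeeping in the previous paragraph: verifying rigorously that in the double-complex abutment no nonzero differential connects the three surviving antidiagonal entries (this is where one must exploit both $\bmu\notin\Gamma_0$ and $\bmu\notin\Gamma_2$ to kill the off-antidiagonal $E^2$-terms that a differential could use) and that the resulting short exact sequences of $A_k$-modules split. The identification of the individual $E^2$-terms, by contrast, is routine given Proposition \ref{prop: support cohomology}, Corollary \ref{cor:Gammai}, Mayer--Vietoris, and Lemma \ref{lem-reqular-sequence}, all of which are already available.
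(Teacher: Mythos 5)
Your proof follows the same route as the paper's: the two \v{C}ech--Koszul spectral sequences, with $\bmu\notin\Gamma_0$ isolating $B^\star(\bdelta)_\bmu$ at position $(n+1,n+1)$ and $\bmu\notin\Gamma_0\cup\Gamma_2$ both identifying the two surviving antidiagonal $E^2$-entries with the local-cohomology terms $\bigl(H^{n_j+1}_{\mm_j}(K_{n_j+1}(\bF,C))\bigr)_\bmu$ and killing all differentials into and out of them. You go a bit further than the paper's terse ``compare the two spectral sequences'' by flagging the extension problem; the observation that actually does the work is the freeness in degree $\bmu$ of the two local-cohomology pieces (they are graded components of $\bb$-local cohomology of free $C$-modules, and they occupy the top of the filtration, so the two short exact sequences split), whereas your side claim that $(C/I)^\star_{\bdelta-\bmu}$ is free is neither established by the argument of Theorem \ref{thm: duality} nor needed for the splitting.
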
  
 \begin{proof}
We follow again the same proof as the one of Theorem \ref{thm: duality}. If we start taking homology vertically, the second page is
{\small
	$$\begin{matrix}
	0&0&0&0\\
	\vdots&\vdots&\vdots&\vdots\\
	V_{n_1+n_2+1}\subset H^{n_1+1}_{\bb_1}(K_{n_1+n_2+1})&\cdots&V_{1}\subset H^{n_1+1}_{\bb_1}(K_1) &V_{0}\subset H^{n_1+1}_{\bb_1}(K_0)\\\
	\vdots&\vdots&\vdots&\vdots\\
	W_{n_1+n_2+1}\subset H^{n_2+1}_{\bb_2}(K_{n_1+n_2+1})&\cdots &W_{1}\subset H^{n_2+1}_{\bb_2}(K_1) &W_{0}\subset H^{n_2+1}_{\bb_2}(K_0)\\\
	 \vdots&\vdots&\vdots\\
	M&\ast&\cdots&0
	\end{matrix}$$}

\noindent where $M\cong B^{\star}(\bdelta)$. Since $\bmu \notin \Gamma_0$, with the same argument as in the proof of Theorem \ref{thm: duality}, we deduce that 
	$$
	\left( ^vE^{\infty}_{n_1+n_2+1, n_1+n_2+1}\right)_{\bmu}= \left( ^vE^{2}_{n_1+n_2+1, n_1+n_2+1}\right)_{\bmu}= M_{\bmu}.$$

As 	$\bmu \notin (\Gamma_0\cup \Gamma_2)$, by the definition, 	for $p=n_1+1, n_2+1$ we get
$$
\left( ^vE^{2}_{p, p-1}\right)_{\bmu}= \left( ^vE^{2}_{p, p+1}\right)_{\bmu}=0.
$$
First, it implies that $\left( ^vE^{2}_{p,p}\right)_{\bmu}\cong \left(H^{p}_{\mm_1}(K_{p}(\bF,C))\right)_{\bmu}$ 	  for $p=n_1+1,n_2+1$. Second, it guarantees that there will be no non-zero map from or to $^vE^{2}_{n_1+1,n_1+1}$ and $^vE^{2}_{n_2+1,n_2+1}$, hence
\begin{align*}
\left( ^vE^{\infty}_{n_1+1, n_1+1}\right)_{\bmu}&\cong \left( ^vE^{2}_{n_1+1, n+1}\right)_{\bmu}\cong  \left(H^{n_1+1}_{\mm_1}(K_{n_1+1}(\bF,C))\right)_{\bmu}\\
\quad \left( ^vE^{\infty}_{n_2+1, n_2+1}\right)_{\bmu}&\cong \left( ^vE^{2}_{n_2+1, n_2+1}\right)_{\bmu}\cong \left(H^{n_2+1}_{\mm_1}(K_{n_2+1}(\bF,C))\right)_{\bmu}.
\end{align*}

Now, if we start taking homology horizontally we obtain the same conclusions as in the proof of Theorem \ref{thm: duality} and hence the claimed assertion follows from comparing these two spectral sequences.
 \end{proof}

To conclude this section, we focus on multigraded zero-dimensional polynomial systems with coefficients in a field $\KK$, our goal being to generalize Lemma \ref{lem:SGregIsat} to the multigraded  setting. Let $f_0,\dots f_n$ be $n+1$ multihomogeneous polynomials of degree $\bd_0,\ldots, \bd_{n}$ in $R = \mathbb{K}[\bx_0,\dots, \bx_r]$. These polynomials can be considered as a specialization of the $n+1$ generic polynomials $F_0,\dots, F_{n}$. Thus, following what we did in \S \ref{section Sat and duality}, we define $I(\bff)$ as the ideal generated by $\bff:= \{f_0,\dots, f_{n}\}$, $B(\bff) := R/I(\bff)$, $B(\bff)^{\sat}:= R/ I(\bff)^{\sat}$ and $B^{\sat}(\bff):=R/I^{\sat}(\bff)$. Similarly to the single graded case, 
the graded component $B(\bff)_\bnu$, $\bnu \in \ZZ^r$, is a $\KK$-vector space and the Hilbert function of $B(\bff)$ is the function $\HF_{B(\bff)}(\bnu)=\dim_\KK B(\bff)_\bnu$. For $\bnu$ sufficient large component-wise, the Hilbert function becomes a polynomial function which is called the Hilbert polynomial and that is denoted by $\HP_{B(\bff)}(\bnu)$ (see e.g.~\cite[Proposition 4.26]{BotCh}).

\begin{prop}\label{prop:MGHFHP} 
Assume that $I(\bff)$ defines a finite subscheme in $\PP^{n_1}_{\KK}\times \ldots \times \PP^{n_r}_{\KK}$ of degree $\kappa$. Then, 
for any $\bmu \in \bdelta- (\min_{i} d_{i,1}-1,\dots ,\min_{i}d_{i,r}-1)+\NN^r$,   $$\HF_{B(\bff)^{\sat}}(\bmu)=\HF_{B^{\sat}(\bff)}(\bmu)=\HP_{B(\bff)}(\bmu)=\kappa.$$
\end{prop}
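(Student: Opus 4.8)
The plan is to mimic the structure of the proof of Lemma~\ref{lem:SGregIsat} in the single graded case, replacing the single integer $\delta$ by the multidegree $\bdelta$ and using the multigraded cohomological bounds furnished by Corollary~\ref{cor- main theorem} and Corollary~\ref{cor: syz}. First I would observe, exactly as in the single graded case, that the three ideals $I(\bff)\subset I^{\sat}(\bff)\subset I(\bff)^{\sat}$ of $R$ all have the same saturation with respect to $\bb$, so $B(\bff)$, $B^{\sat}(\bff)$ and $B(\bff)^{\sat}$ share the same Hilbert polynomial; moreover, since $I(\bff)$ is assumed to define a finite subscheme of $\PP^{n_1}_\KK\times\cdots\times\PP^{n_r}_\KK$ of degree $\kappa$, that common Hilbert polynomial is the constant $\kappa$ (this is the multigraded analogue of the fact that the Hilbert polynomial of a zero-dimensional scheme is its degree; see e.g.~\cite[Proposition 4.26]{BotCh} for the existence of the multigraded Hilbert polynomial). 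So the content of the statement is that the Hilbert \emph{function} already equals this value $\kappa$ on the shifted orthant $\bdelta-(\min_i d_{i,1}-1,\dots,\min_i d_{i,r}-1)+\NN^r$.

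Next I would reduce the equality $\HF=\HP$ to a local cohomology vanishing via the multigraded Grothendieck--Serre formula: for a finitely generated $\ZZ^r$-graded $R$-module $N$ with $\dim N\leq r$ (here $N=B(\bff)^{\sat}$, supported on finitely many points), one has $\HF_N(\bmu)-\HP_N(\bmu)=\sum_{i\geq 0}(-1)^i\dim_\KK H^i_\bb(N)_\bmu$, and since $H^0_\bb(B(\bff)^{\sat})=0$ by construction and $H^i_\bb(B(\bff)^{\sat})=0$ for $i\geq 2$ by the finiteness assumption, it suffices to show $H^1_\bb(B(\bff)^{\sat})_\bmu=0$ on the claimed region. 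For the variant $B^{\sat}(\bff)$ I would do the same, needing also $H^0_\bb(B^{\sat}(\bff))_\bmu=0$, and I would control the passage from the generic ring to the specialized one exactly as in the proof of Lemma~\ref{lem:SGregIsat}: by the semicontinuity of fiber cohomology along the projective morphism $\PP_{A}\to\Spec(A)$ (the multigraded analogue of \cite[Proposition 6.3]{Ch13}), the vanishing of $H^0_\bb(B^{\sat})_\bmu$ and $H^1_\bb(B^{\sat})_\bmu$ in the generic setting implies the corresponding vanishing after specialization to $\bff$. So everything comes down to the generic statement: $H^0_\bb(B^{\sat})_\bmu = 0$ (automatic, as $B^{\sat}=C/I^{\sat}$) and $H^1_\bb(B^{\sat})_\bmu = (I^{\sat}/I)_\bmu / (\text{image of }H^1_\bb(C)) \ldots$ — more precisely $H^1_\bb(B^{\sat}) = H^1_\bb(C/I^{\sat})$, which by the long exact sequence in local cohomology and $H^0_\bb(C/I^{\sat})=H^1_\bb(C/I^{\sat})$ relates to $H^0_\bb(B)=I^{\sat}/I$ and $H^1_\bb(C)$. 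Thus I need $(I^{\sat}/I)_\bmu=0$ and $H^1_\bb(C)_\bmu=0$ on the region.

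The vanishing of $H^1_\bb(C)_\bmu$ for $\bmu$ in the claimed orthant is immediate from Proposition~\ref{prop: support cohomology}: $H^1_\bb(C)$ is a sum of $\check C_\alpha$ over $\alpha$ with $n(\alpha)+1=1$, i.e.\ $\alpha=\emptyset$ when all $n_j\geq 1$, so $H^1_\bb(C)=0$ altogether (and in any case its support, being a translate of $Q_\alpha$, forces some coordinate to be negative, hence does not meet $\bdelta-(\min_i d_{i,j}-1)_j+\NN^r$ as soon as $\delta_j-(\min_i d_{i,j}-1)\geq 0$, which holds since $\delta_j=\sum_i d_{i,j}-(n_j+1)\geq (n+1)\min_i d_{i,j}-(n_j+1)\geq \min_i d_{i,j}-1$ using $n\geq n_j$). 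The vanishing of $(I^{\sat}/I)_\bmu$ is where Corollary~\ref{cor- main theorem} enters: $(I^{\sat}/I)_\bmu=0$ whenever $\bmu\notin(\bdelta-\NN^r)\cup\Gamma_0\cup\Gamma_1$, so I must check that the orthant $\bdelta-(\min_i d_{i,1}-1,\dots,\min_i d_{i,r}-1)+\NN^r$ meets $(\bdelta-\NN^r)\cup\Gamma_0\cup\Gamma_1$ only in the finite box $\bdelta-(\min_i d_{i,j}-1)_j+\{0,\dots,\min_i d_{i,j}-1\}_j$, and that on that box the Hilbert function is already constant $=\kappa$; on the box the point lies in $\bdelta-\NN^r$ so one uses Proposition~\ref{prop: Duality}/Theorem~\ref{thm: duality} instead. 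Concretely: for $\bmu=\bdelta-(\min_i d_{i,j}-1)_j+\bnu$ with $\bnu\in\NN^r$, if some $\nu_j\geq\min_i d_{i,j}$ then the $j$-th coordinate of $\bdelta-\bmu$ is negative so $(C/I)_{\bdelta-\bmu}$ is handled by the relevant vanishing, while if all $\nu_j\leq\min_i d_{i,j}-1$ we are in the range of the multigraded duality and the key point is that $B_{\bdelta-\bmu}$ is \emph{free} of the correct rank, matching $\kappa$ after specialization; I would combine this with Corollary~\ref{cor: syz} to argue that the specialization of $B_{\bdelta-\bmu}$ has no extra torsion, exactly paralleling how Lemma~\ref{lem:SGregIsat} used the vanishing of $H^1_\bb$ in the generic case.

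The main obstacle I anticipate is the combinatorial bookkeeping needed to verify that the shifted orthant $\bdelta-(\min_i d_{i,1}-1,\dots,\min_i d_{i,r}-1)+\NN^r$ is disjoint from $\Gamma_0$ and $\Gamma_1$ (equivalently, that it is contained in the good region of Corollary~\ref{cor- main theorem} together with a finite corner handled by duality): one must take each $\alpha\subsetneq\{1,\dots,r\}$ and each $\lambda\subseteq\{0,\dots,n\}$ with $\sharp\lambda=n(\alpha)+i$ ($i=0,1$), use the explicit description of $Q_\alpha$ from Proposition~\ref{prop: support cohomology}(2), and check that $\sum_{j\in\lambda}\bd_j+Q_\alpha$ cannot contain a point of the orthant — for an index $j\in\alpha$ the $Q_\alpha$-contribution is $-\N-(n_j+1)$, very negative, so the obstruction is only for coordinates $j\notin\alpha$, where one needs $\delta_j-(\min_i d_{i,j}-1)\leq \sum_{\ell\in\lambda}d_{\ell,j}-\text{(something)}$ to fail, i.e.\ a counting inequality on the $d_{i,j}$; this is routine but must be written out carefully. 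A secondary, more delicate point is making precise the semicontinuity argument identifying $\HF_{B^{\sat}(\bff)}$ with $\HF_{B(\bff)^{\sat}}$ on the region, i.e.\ the multigraded analogue of the three-ideals argument plus \cite[Proposition 6.3]{Ch13}; I would cite \cite{Ch13,BotCh} for the requisite multigraded fiber-cohomology semicontinuity and otherwise follow the single-graded proof verbatim.
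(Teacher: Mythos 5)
Your overall architecture --- reduce to a local cohomology vanishing via the multigraded Grothendieck--Serre formula, handle $B^{\sat}(\bff)$ by a fiber-semicontinuity argument, and then prove a generic vanishing statement --- matches the paper's plan. But the cohomological heart of the argument is wrong: you propose to show $H^1_\bb(B^{\sat})_\bmu=0$ by proving $(I^{\sat}/I)_\bmu=0$ and $H^1_\bb(C)_\bmu=0$, and this reduction is not valid. From $0\to I^{\sat}/I\to B\to B^{\sat}\to 0$ and the $\bb$-torsion of $I^{\sat}/I$, one gets $H^1_\bb(B)\cong H^1_\bb(B^{\sat})$; the vanishing of $H^0_\bb(B)=I^{\sat}/I$ is neither necessary nor sufficient for the vanishing of this $H^1_\bb$, and the formula ``$H^1_\bb(B^{\sat})_\bmu=(I^{\sat}/I)_\bmu/\mathrm{im}\,H^1_\bb(C)$'' has no basis in the long exact sequence. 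Worse, $(I^{\sat}/I)_\bmu$ is \emph{not} zero on the region in question: the box $\bdelta-\{0,\ldots,\min_i d_{i,j}-1\}^r$ lies inside the claimed orthant, and Theorem~\ref{thm:main} shows $(I^{\sat}/I)_\bmu$ is a nonzero free $A$-module there. Your fallback (``use duality on the box; $B_{\bdelta-\bmu}$ is free of the correct rank, matching $\kappa$ after specialization'') does not hold up --- the $A$-rank of $B_{\bdelta-\bmu}$ in the generic setting is a binomial count, not $\kappa$, and the generic freeness of $H^0_\bb(B)_\bmu$ says nothing directly about the Hilbert function of $B(\bff)^{\sat}$ at $\bmu$.

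What the paper actually does is run the \v{C}ech--Koszul spectral sequences for the \emph{specialized} complex $K_\bullet(\bff,R)$. On the horizontal page, $H^1_\bb(B(\bff))$ sits in a degenerate position, and comparison with the vertical page (for $\bmu\notin\Gamma_0$, which follows from Lemma~\ref{lem: bdelta-di, Gamma}) identifies $H^1_\bb(B(\bff)^{\sat})_\bmu$ with $H^{n+1}_\bb\bigl(K_n(\bff,R)\bigr)_\bmu$, the top local cohomology of the $n$-th Koszul \emph{module}, a free module $\bigoplus_{i=0}^n R(-\bar\bd+\bd_i)$ whose support is $\bigcup_{i=0}^n(\bdelta-\bd_i-\NN^r)$ by Example~\ref{ex: top cohomology}. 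That support is disjoint from the shifted orthant by the same counting that gives $\bmu\notin\Gamma_0$, whence the vanishing. This identification --- controlling $H^1_\bb(B(\bff))$ via the support of the local cohomology of the Koszul terms $K_p$, rather than via $I^{\sat}/I$ and $H^1_\bb(C)$ --- is the step your plan is missing, and without it the argument does not close.
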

\begin{lem}\label{lem: bdelta-di, Gamma}
if $\bmu \in \bdelta- (\min_{i} d_{i,1}-1,\dots ,\min_{i}d_{i,r}-1)+
\NN^r$, then $\bmu\notin  \left(\Gamma_0\cup \Gamma_1\right)$. 
\end{lem}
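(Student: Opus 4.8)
The plan is to show that every element of $\Gamma_0 \cup \Gamma_1$ is excluded from the region $\bdelta - (\min_i d_{i,1}-1, \dots, \min_i d_{i,r}-1) + \NN^r$ by a direct coordinate-by-coordinate comparison, using the explicit description of $\Gamma_i$ from Corollary \ref{cor:Gammai}. Recall that a typical element of $\Gamma_0 \cup \Gamma_1$ has the form $\sum_{j \in \lambda} \bd_j + q$ with $q \in Q_\alpha$, where $\emptyset \neq \alpha \subsetneq \{1,\dots,r\}$ and $\lambda \subseteq \{0,\dots,n\}$ has $\sharp\lambda = n(\alpha) + i$ for $i \in \{0,1\}$; and by Proposition \ref{prop: support cohomology}(2), the coordinates of $q$ indexed by $j \in \alpha$ lie in $-(n_j+1) - \NN$, while those indexed by $j \notin \alpha$ lie in $\NN$.

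First I would fix an index $j_0 \in \alpha$ (possible since $\alpha \neq \emptyset$) and estimate the $j_0$-th coordinate $\mu_{j_0}$ of such an element. We have $\mu_{j_0} = \sum_{j \in \lambda} d_{j,j_0} + q_{j_0} \leq \sum_{j \in \lambda} d_{j,j_0} - (n_{j_0}+1)$. Since $\sharp\lambda = n(\alpha) + i \leq n(\alpha) + 1 \leq n$ (here I use $\alpha \subsetneq \{1,\dots,r\}$, so $n(\alpha) = \sum_{j \in \alpha} n_j \leq n - n_{j'}$ for some $j' \notin \alpha$, hence $n(\alpha) + 1 \leq n - n_{j'} + 1 \leq n$), the sum $\sum_{j \in \lambda} d_{j,j_0}$ omits at least one of the $n+1$ degrees $d_{0,j_0}, \dots, d_{n,j_0}$, and in particular omits at least the smallest, so $\sum_{j \in \lambda} d_{j,j_0} \leq \big(\sum_{i=0}^n d_{i,j_0}\big) - \min_i d_{i,j_0}$. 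Therefore
\[
\mu_{j_0} \leq \Big(\sum_{i=0}^n d_{i,j_0}\Big) - \min_i d_{i,j_0} - (n_{j_0}+1) = \delta_{j_0} - \min_i d_{i,j_0} < \delta_{j_0} - (\min_i d_{i,j_0} - 1),
\]
which shows that the $j_0$-th coordinate of any element of $\Gamma_0 \cup \Gamma_1$ is strictly less than $\delta_{j_0} - (\min_i d_{i,j_0} - 1)$. On the other hand, every $\bmu$ in the target region satisfies $\mu_j \geq \delta_j - (\min_i d_{i,j} - 1)$ for all $j$, so it cannot lie in $\Gamma_0 \cup \Gamma_1$. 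This completes the argument.

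The only subtlety — and the one place requiring care rather than the main obstacle — is making sure the bound $\sharp\lambda \leq n$ is actually available: it comes from combining $\sharp\lambda = n(\alpha) + i$ with $i \leq 1$ and with the strict inclusion $\alpha \subsetneq \{1,\dots,r\}$ forcing $n(\alpha) \leq n - 1$ (since each $n_j \geq 1$, dropping one index from the full set $\{1,\dots,r\}$ decreases $n(\alpha)$ by at least $1$ from $n$). One should double check the edge case $r = 1$, where $\Gamma_0 \cup \Gamma_1 = \emptyset$ anyway (there is no proper nonempty $\alpha$), so the statement is vacuous; and the case where some $n_j = 1$ is already covered. I expect no genuine difficulty here: the statement is purely a combinatorial containment of explicitly described lattice regions, and the inequality above does all the work.
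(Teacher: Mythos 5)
Your proof is correct and takes essentially the same approach as the paper: fix an index $j_0 \in \alpha$, use the explicit form of $Q_\alpha$ from Proposition \ref{prop: support cohomology}(2) together with $\sharp\lambda \le n$ (from $n(\alpha)\le n-1$) to bound $\mu_{j_0} \le \delta_{j_0} - \min_i d_{i,j_0}$, and conclude that this is incompatible with $\mu_{j_0}\ge \delta_{j_0}-(\min_i d_{i,j_0}-1)$. The only cosmetic difference is that you treat $\Gamma_0$ and $\Gamma_1$ uniformly via the parameter $i\in\{0,1\}$, whereas the paper argues for $\Gamma_1$ and remarks that $\Gamma_0$ is analogous; your careful justification of $\sharp\lambda\le n$ is in fact a touch more precise than the paper's passing assertion.
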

\begin{proof}
We show that $\bmu\notin \Gamma_1$, the proof for $\Gamma_0$ is the same.  By the definition, one can rewrite $\Gamma_1$ as
$
\Gamma_1 = \cup_{\alpha, \lambda} 
\Gamma_1^{\alpha, \lambda}
$,
where $\Gamma_1^{\alpha, \lambda}= \sum_{j\in 
\lambda} \bd_j + Q_{\alpha}$. Fix $\alpha, \lambda$ 
and assume $i\in \alpha$. As $\sharp \lambda= n(\alpha)+1\leq 
n-1$, the $i$-th entry of every element in $
\Gamma_1^{\alpha, \lambda}$ is at most $\bdelta- d_{j,i}$ 
for all $j\notin \lambda$.
\end{proof}
\begin{proof}[Proof of Proposition \ref{prop:MGHFHP} ]
This proof follows along the same lines as the one of Lemma \ref{lem:SGregIsat}. 
	We first prove that $\HF_{B(\bff)^{\sat}}(\bmu)=\HP_{B(\bff)}(\bmu)$ in the claimed region. For that purpose, consider the spectral sequences associated with the \v{C}ech-Koszul double complex  ${C}^{\bullet}_{\bb}({K}_{\bullet}(\bff,R))$. As $I(\bff)$ defines finitely many points, if we start taking homologies horizontally the second page of the spectral sequence is of the form
$$\begin{matrix}
{\ast}&\cdots  \ast & H_2(K_\bullet) & H^0_\bb(H_1(K_\bullet)) &I(\bff)^{\sat}/I(\bff)\\
0&\cdots 0 & 0 & H^1_{\bb}(H_1(K_\bullet)) &H^1_{\bb}(B(\bff))\\
0&\cdots& 0 & 0&0\\
\vdots&\cdots&\vdots&\vdots\\
0&\cdots&0 & 0&0\\
\end{matrix}$$
where $K_\bullet$ stands for the Koszul complex $K_\bullet(\bff,R)$; recall that $H_0(K_\bullet)=B(\bff)$, $H^0_\bb(B(\bff))=I(\bff)^{\sat}/I(\bff)$ and  $H^1_{\bb}(B(\bff)) = H^1_{\bb}(B(\bff)^{\sat})$. On the other hand, the other spectral sequence is the same as in the proof of Theorem \ref{thm: duality}. Now, by definition, if $\bmu \notin \Gamma_0$ then $H^1_{\bb}(B(\bff)^{\sat})_{\bmu} \simeq H^{n+1}_\bb(K_n(\bff,R))_{\bmu}$ and setting $\bar{\bd}:= \sum_{i=0}^n \bd_i$, 
\begin{align*}
\Supp ( H^{n+1}_\bb(K_n(\bff,R))) & = \Supp (H^{n+1}_\bb(\oplus_{i=0}^{n} R(-\bar{\bd}+\bd_i)))\\
& = \Supp( \oplus _{i=0}^n H^{n+1}_\bb(R) (-\bar{\bd}+\bd_i) )\\
&= \cup_{i=0}^n (\bdelta- \bd_i -\NN^r),
\end{align*}
where the last equality follows from Example \ref{ex: top cohomology}.  By Lemma \ref{lem: bdelta-di, Gamma}, $\bmu\notin \Gamma_0$ and $\bmu \notin \Supp ( H^{n+1}_\bb(K_n(\bff,R))$, hence $H^1_{\bb}(B(\bff)^{\sat})_{\bmu}=0$. By the multigraded Grothendieck-Serre formula \cite[Proposition 4.27]{BotCh},
$$
\HP_{B(\bff)^{\sat}}(\bmu)= \HF_{B(\bff)^{\sat}}(\bmu) + \sum_{i\geq 1} (-1)^i \dim H^i_{\bb}(R/I(\bff))_\bmu.
$$
Since $H^0_{\bb}(B(\bff)^{\sat})=0$ and $H^i_{\bb}(B(\bff)^{\sat})=0$ for $i>1$, it follows that the vanishing of $H^1_{\bb}(B(\bff)^{\sat})_\bmu$  implies the expected equality $\HF_{B(\bff)^{\sat}}(\bmu)= \HP_{B(\bff)^{\sat}}(\bmu)=\HP_{B(\bff)}(\bmu)$.

Now, we turn to the proof of $\HF_{B^{\sat}(\bff)}(\bmu)=\HP_{B(\bff)}(\bmu)$ for all $\bmu$ in the claimed region. As a consequence of the Grothendieck-Serre formula again, this latter equality holds for all $\bmu$ such that  $H^0_\bb(B^{\sat}(\bff))_\bmu=0$ and $H^1_\bb(B^{\sat}(\bff))_\bmu=0$. As in Lemma \ref{lem:SGregIsat}, the vanishing of these two local cohomology modules can be controlled as fibers of projective morphisms. It turns out that \cite[Proposition 6.3]{Ch13} is stated in the classical single graded case, but a similar statement holds in the multigraded setting as it is a consequence of the more general \cite[Lemma 6.2]{Ch13}. We deduce that $H^0_\bb(B^{\sat}(\bff))_\bmu=0$ and $H^1_\bb(B^{\sat}(\bff))_\bmu=0$ for all $\bmu$ such that $H^0_\bb(B^{\sat})_\bmu=0$ and $H^1_\bb(B^{\sat})_\bmu=0$. The analysis of the two \v{C}ech-Koszul spectral sequences associated with $I$, as above, proves that 
$H^0_\bb(B^{\sat})_\bmu=0$ and $H^1_\bb(B^{\sat})_\bmu=0$ for all $\bmu \in \bdelta- (\min_{i} d_{i,1}-1,\dots, \min_{i}d_{i,r}-1)+\NN^r$, which concludes the proof.
\end{proof}

\begin{rem} As a consequence of Proposition \ref{prop:MGHFHP}, the canonical map from $I^{\sat}_\bnu$ to $I(\bff)^{\sat}_\bnu$, which is induced by the specialization $C\rightarrow R$ sending $F_i$ to $f_i$, is surjective for all $\bmu \in \bdelta- (\min_{i} d_{i,1}-1,\dots ,\min_{i}d_{i,r}-1)+\NN^r$.
\end{rem}

\section{Multigraded Sylvester forms}\label{sec:MultiGradedSylvesterForms}

In this section,  we introduce elements in $I^{\sat}$ that we call multigraded Sylvester forms. In the generic setting, these forms yield an explicit duality similar to the one described in Section \ref{subsec:SGSylvForm}. We begin with the construction of the \emph{twisted Jacobian determinant}. We use Notation \ref{Notation} and we emphasize that in the generic setting the base ring $k$ is an arbitrary commutative ring.

\subsection{Multigraded twisted Jacobian}\label{sec:JouanoloySylvForm}

As proved in Section \ref{sec:duality}, $\left(I^{\sat}/I\right)_\bdelta$ is a free $A_k$-module of rank one. A first natural task is to get an explicit generator of this module. Such a generator already appeared in the literature, notably in \cite{CCD97} and \cite{CDS98} under the name of \emph{toric Jacobian} in the more general setting of toric geometry, and in \cite{ChThesis88}, where a more algebraic treatment is proposed in the multiprojective setting. What follows in this section is strongly inspired by the construction given in \cite[Chapter III]{ChThesis88}; we provide proofs, with slight modifications, for the sake of accessibility and completeness.

\medskip

For all $i=0,\ldots, n$, we first decompose the generic polynomials $F_i$ with respect to the variables $\bx_1$ as follows
\begin{equation}\label{eq:LambdaDecomp1}
F_i=x_{1,0}F_{i,0}^{(1)}+x_{1,1}F_{i,1}^{(1)}+\cdots+x_{1,n_1}F_{i,n_1}^{(1)}.	
\end{equation}
There are many choices for such a decomposition, and we take one of them. Notice that a possible constraint to uniquely determine this decomposition is, for instance, to impose the conditions 
\begin{equation}\label{eq:unicond1}
F_{i,l}^{(1)} \in A[x_{1,l},\ldots,x_{1,n_1}][\bx_2,\dots , \bx_r], \ l=0,\ldots,n_1.
\end{equation}
Then, in a similar way, we decompose  $F_{i,n_1}^{(1)}$ with respect to $\bx_2$:
\begin{equation}\label{eq:LambdaDecomp2}
F_{i,n_1}^{(1)}=x_{2,0}F_{i,0}^{(2)}+x_{2,1}F_{i,1}^{(2)}+\cdots+x_{2,n_2}F_{i,n_2}^{(2)}.
\end{equation}
Again, there are many choices for this decomposition and we take one of them, but we can also impose, for instance, the conditions
\begin{equation}\label{eq:unicond2}
F_{i,l}^{(2)} \in A[x_{1,n_1}][x_{2,l},\ldots,x_{2,n_2}][\bx_3,\dots, \bx_r], \ l=0,\ldots,n_2.
\end{equation}
We continue this process similarly until we decompose $F_{i,n_{r-1}}^{(n_{r-1})}$ with respect to $\bx_r$. In the end, each polynomial $F_i$, $i=0,\ldots,n$, is decomposed as follows
\begin{equation}\label{eq:LambdaDecomp}
F_i=  \sum_{j=0}^{n_1-1} x_{1,j}F_{i,j}^{(1)}+ 
x_{1,n_1} \left(\sum_{j=0}^{n_2-1} x_{2,j}F_{i,j}
^{(2)}  + x_{2,n_2} \left( \cdots + x_{r-1,n_{r-1}} \left( 
\sum_{j=0}^{n_r} x_{r,j}F_{i,j}^{(r)}\right)\right)\right).		
\end{equation}
Now, we define $\Dc$ as the determinant of the following $(n+1)\times (n+1)$ matrix:
\begin{equation}\label{eq:MGMatrixD}
\left(
\begin{array}{ccccccccccc}
F_{0,0}^{(1)} & \cdots & F_{0,n_1-1}^{(1)} & F_{0,0}^{(2)} & \cdots & F_{0,n_2-1}^{(2)} & \cdots &
F_{0,n_{r-1}-1}^{(r-1)} & F_{0,0}^{(r)} & \cdots & F_{0,n_r}^{(r)} \\
\vdots & & \vdots & \vdots & & \vdots  & & \vdots & \vdots & & \vdots \\
F_{n,0}^{(1)} & \cdots & F_{n,n_1-1}^{(1)} & F_{n,0}^{(2)} & \cdots & F_{n,n_2-1}^{(2)} & \cdots &
F_{n,n_{r-1}-1}^{(r-1)} & F_{n,0}^{(r)} & \cdots & F_{n,n_r}^{(r)} 
\end{array}
\right).	
\end{equation}

\medskip

From its definition, $\Dc$ is multihomogeneous in the sets of variables $\bx_i$. More precisely, a straightforward counting shows that 
\begin{align}\label{eq:degDc}
\deg_{\bx_i} \Dc & =\delta_i-\sum_{j=i+1}^r n_j, \ i=1,\ldots,r,
\end{align}
where we recall that  $\bdelta$ is defined by \eqref{eq:delta}. As a matter of fact,  $\Dc$ is a linear form with respect to the coefficients of each polynomial $F_i$, $i=0,\ldots,n$. In addition, another property that follows directly by definition is that $\Dc$ belongs to the ideal $I^{\sat}\subset C$. Notice that the order of variables and polynomials plays an important role in the above construction, as well as the choice of decompositions.

\begin{defn}\label{def:Lambda} After a choice of ordering for polynomials and variables, and a choice of decompositions, one defines a \emph{twisted Jacobian} $\Lambda$ of the $F_i$'s by 
	$$\Lambda=\left(\prod_{i=1}^{r-1} x_{i,n_i}^{v_i}\right) \Dc,$$
	where $v_i=\sum_{j=i+1}^r n_j$ for all $i=1,\ldots,r$.
	The one uniquely determined by the choices as detailed by \eqref{eq:unicond1} and \eqref{eq:unicond2}, is denoted by $\Lambda_0$ and is called the \emph{twisted Jacobian}.
\end{defn}

 From the above definition, we deduce that $\Lambda$ is a multihomogeneous polynomial of degree $\bdelta$ and that it is a linear form with respect to the coefficients of each polynomial $F_i$.  Although $\Lambda$ is not unique, the following result shows that it is essentially unique modulo the ideal $I$.

\begin{prop}[{\cite[Theorem III.1.5]{ChThesis88}}]\label{prop:JPJ} The class of $\Lambda$ in $B=C/I$, denoted $\Delta$, is a generator of $(I^{\sat}/I)_\bdelta$ which is a free $A_{k}$-module of rank 1. In particular, $\Delta$ is independent of the order of variables and polynomials and of the choice of the decompositions \eqref{eq:LambdaDecomp} used for $\Lambda$, up to multiplication by an invertible element in $k$.  
\end{prop}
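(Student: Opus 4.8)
The plan is to combine the duality result of Theorem~\ref{thm: duality} with a direct verification that the class $\Delta$ of $\Lambda$ in $B$ is nonzero and generates the relevant graded piece. First I would check that $\Lambda$ indeed lands in $I^{\sat}$ and has degree exactly $\bdelta$: multihomogeneity in each $\bx_i$ follows from \eqref{eq:degDc} together with the twisting factors $\prod_i x_{i,n_i}^{v_i}$, a routine degree count already recorded above. Membership in $I^{\sat}$ is the standard Cramer's-rule argument: expanding the determinant $\Dc$ along suitable generalized Laplace expansions against the decomposition \eqref{eq:LambdaDecomp}, one sees that $x_{1,j}\Dc$, $x_{2,j}\Dc$, etc., lie in $I$ for appropriate ranges of $j$, so that a sufficiently high power of $\bb$ multiplies $\Lambda$ into $I$. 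Hence $\Delta\in (I^{\sat}/I)_\bdelta$.

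Next, by Corollary~\ref{cor- main theorem} (or directly Theorem~\ref{thm: duality} applied at $\bmu=\bdelta$, which is not in $\Gamma_0\cup\Gamma_1$ since each coordinate of any nonzero element of $\Gamma_\ell$ is at least some $d_{i,j}\geq 1$ while $\bdelta$ need not satisfy this — more precisely one uses the proof of Lemma~\ref{lem: bdelta-di, Gamma} with $\bmu=\bdelta$), we already know that $(I^{\sat}/I)_\bdelta\simeq \Hom_{A_k}((C/I)_{\b0},A_k)\simeq \Hom_{A_k}(A_k,A_k)\simeq A_k$, since $(C/I)_{\b0}=A_k$ as $F_0,\ldots,F_n$ have strictly positive degree in every block of variables. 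So $(I^{\sat}/I)_\bdelta$ is free of rank one; it remains to prove that $\Delta$ is a \emph{generator}, i.e.\ that the image of $\Delta$ under this isomorphism is a unit of $A_k$, equivalently that $\Delta$ is not contained in $\mathfrak{p}(I^{\sat}/I)_\bdelta$ for any prime $\mathfrak{p}$ of $A_k$ — for this it suffices to exhibit one specialization of the coefficients over a field where $\Delta$ specializes to a nonzero element of the (one-dimensional) specialized module, and to know the formation of $(I^{\sat}/I)_\bdelta$ commutes with that specialization. The cleanest choice is the "monomial" specialization $F_i\mapsto x_{1,n_1}^{d_{i,1}}\cdots x_{r,n_r}^{d_{i,r}}$ of Lemma~\ref{lem: appendix}, or a generic diagonal one; there one computes $\Dc$ and $\Lambda$ explicitly as an explicit monomial (times a nonzero constant), checks it is nonzero in $(I^{\sat}/I)_\bdelta(\bff)$, and invokes Proposition~\ref{prop:MGHFHP} / the base-change statement in its proof to conclude that specialization is faithful in degree $\bdelta$.

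Finally, the independence statements follow formally: any two twisted Jacobians $\Lambda,\Lambda'$ built from different orderings or decompositions both have class in the rank-one free module $(I^{\sat}/I)_\bdelta$, and both are generators by the previous paragraph (the argument did not use any particular choice), hence $\Delta'=u\Delta$ for a unit $u\in A_k$; tracking the coefficient count shows $\Lambda,\Lambda'$ are linear in the coefficients of each $F_i$, so $u$ cannot involve any of the $U_{i,\balpha_1,\ldots,\balpha_r}$ and therefore $u\in k^\times$.

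\textbf{Main obstacle.} The delicate point is the faithfulness of specialization in degree $\bdelta$: one must be sure that reducing the universal coefficients to a field $\KK$ does not annihilate $\Delta$ for trivial reasons, i.e.\ that $(I^{\sat}/I)_\bdelta\otimes_{A_k}\KK \to (I(\bff)^{\sat}/I(\bff))_\bdelta$ is injective (or at least nonzero on $\Delta$). This is exactly the kind of base-change control that Proposition~\ref{prop:MGHFHP} and the remark following it provide, since $\bdelta$ lies in the region $\bdelta-(\min_i d_{i,1}-1,\ldots,\min_i d_{i,r}-1)+\NN^r$; but one has to be careful to pick a specialization $\bff$ that actually defines a finite (e.g.\ empty or reduced zero-dimensional) subscheme so that Proposition~\ref{prop:MGHFHP} applies, and to verify the explicit nonvanishing of the specialized $\Lambda$ there. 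Everything else is bookkeeping with determinant expansions and degree counts.
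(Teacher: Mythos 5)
Your structural framing is right (use Theorem~\ref{thm: duality} to see that $(I^{\sat}/I)_\bdelta\simeq A_k$ is free of rank one, then argue that $\Delta$ is a generator), but the core of your argument has two genuine gaps, one concrete and one conceptual.

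The concrete gap: the specialization you propose, $F_i\mapsto x_{1,n_1}^{d_{i,1}}\cdots x_{r,n_r}^{d_{i,r}}$, forces $\Dc=0$ on the nose. Indeed, with that choice $F_i$ contains none of $x_{1,0},\dots,x_{1,n_1-1}$, so in the decomposition~\eqref{eq:LambdaDecomp1} one has $F_{i,j}^{(1)}=0$ for all $j<n_1$, and the first $n_1\geq 1$ columns of the matrix~\eqref{eq:MGMatrixD} vanish identically. Moreover the common zero locus of these $f_i$ is $\bigcup_j\{x_{j,n_j}=0\}$, a union of divisors, so it is not a finite subscheme and Proposition~\ref{prop:MGHFHP} does not apply. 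A cleverer ``generalized permutation'' choice could make $\Dc$ a nonzero monomial, but you would still have to check finiteness of the fibre, and this is not bookkeeping.

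The conceptual gap: exhibiting \emph{one} specialization over a field where $\Delta$ maps to a nonzero element of a one-dimensional space only shows that $\Delta=P\xi$ (with $\xi$ a generator and $P\in A_k$) has $P$ outside that particular prime. Since $A_k$ is a polynomial ring, $P$ is a unit if and only if $P\in k^\times$, and no single specialization gives that. What the paper actually uses to close this gap is Lemma~\ref{lem:MGallcoeffs}: any nonzero element of $(I^{\sat}/I)_\bdelta$ must depend on \emph{all} the coefficients of \emph{all} the $F_i$'s, hence $\xi$ is (at least, so exactly) linear in each block of coefficients, forcing $P$ to have $\bU$-degree zero, i.e.\ $P\in k$. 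To then see $P\in k^\times$ one still needs to run the argument over $\ZZ$ and reduce modulo every prime $p$, which in turn requires $\Delta_{\ZZ_p}\neq 0$ for all $p$ and a base-change diagram chase; this is exactly why the nonvanishing of $\Delta$ is proved in the paper over an arbitrary commutative ring by induction on $n$ (using the specialization $x_{r,n_r}\mapsto 0$ together with Corollary~\ref{cor: syz} to control the Koszul syzygies), rather than by a one-shot explicit computation. Without these two inputs --- the multilinearity/Lemma~\ref{lem:MGallcoeffs} step and the characteristic-free nonvanishing --- your plan cannot conclude that $\Delta$ is a generator, only that it is nonzero in a single fibre.
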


This implies the following.  Choose any order of variables and polynomials. Make the unique decomposition as in the definition of $\Lambda_0$, but following the corresponding orders in place of the one above. Then  $\Delta =\pm \Delta_0$ (the class of $\Lambda_0$). Indeed, any of these decompositions is defined for $k=\ZZ$, in which case only $\pm 1$ are invertible.\\

To prove Proposition \ref{prop:JPJ} we will need the following property.

\begin{lem}[{\cite[Lemma III.1.6]{ChThesis88}}]\label{lem:MGallcoeffs} Let $P$ be a multihomogeneous polynomial in $I^{\sat}$ of degree $\bdelta$ such that the class of $P$ in $B$ is nonzero. Then, $P$ depends on all the coefficients of all the polynomials $F_0,\ldots,F_n$.
\end{lem}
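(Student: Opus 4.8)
The plan is to prove the contrapositive: if $P \in I^{\sat}$ is a multihomogeneous polynomial of degree $\bdelta$ that does not depend on some coefficient $U_{i_0,\balpha_1,\dots,\balpha_r}$ of some $F_{i_0}$, then the class of $P$ in $B$ is zero. By multihomogeneity of $P$ in the coefficient grading (each $F_i$ contributes degree $\mathbf{e}_i$ and $P$ must be linear in the coefficients of each $F_i$, as one checks from $\deg_{\bU}\Lambda_0 = (1,\dots,1)$ and the fact that $(I^{\sat}/I)_\bdelta \cong A_k$ forces the same multidegree), if $P$ misses one coefficient of $F_{i_0}$ it actually misses \emph{all} coefficients involving that monomial — but more usefully, the relevant reduction is: it suffices to show $P$ depends on every coefficient, and for this I would argue coefficient by coefficient using the localizations studied in Lemma~\ref{lem: appendix} and Lemma~\ref{lem-reqular-sequence}.

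First I would fix an index $i_0$ and a monomial $\bx_1^{\balpha_1}\cdots\bx_r^{\balpha_r}$ of multidegree $\bd_{i_0}$, and show that if $P$ is independent of $U_{i_0,\balpha_1,\dots,\balpha_r}$ then $P \in I$. The key is to specialize: after a monomial change of coordinates in each $\PP^{n_j}$ we may assume the distinguished monomial is $x_{1,n_1}^{d_{i_0,1}}\cdots x_{r,n_r}^{d_{i_0,r}} = \tau_{i_0}$ (in the notation of Lemma~\ref{lem: appendix}), so $U_{i_0,\balpha_1,\dots,\balpha_r} = \epsilon_{i_0}$. Now I invoke Lemma~\ref{lem: appendix}: in $B_{\sigma}$ with $\sigma = x_{1,n_1}\cdots x_{r,n_r}$ we have the isomorphism sending $\epsilon_{i}\mapsto -H_i/\tau_i$, i.e. $B_\sigma \cong A'_k[\bx_1,\dots,\bx_r]_\sigma$ where $A'_k$ omits the variables $\epsilon_0,\dots,\epsilon_n$. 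Since $P$ does not involve $\epsilon_{i_0}$ — wait, that is not quite enough; $P$ could still involve $\epsilon_j$ for $j\neq i_0$. Here is where I need to be more careful: the hypothesis is only about \emph{one} coefficient. The right move is to use that $P \in I^{\sat}$ means $\sigma^N P \in I$ for some $N$, hence the image of $P$ in $B_\sigma$ is zero, hence (pushing through the isomorphism) the image of $P$ under the substitution $\epsilon_j \mapsto -H_j/\tau_j$ (for all $j$) vanishes identically in $A'_k[\bx]_\sigma$. Write $P = \sum_{m\geq 0} P_m \epsilon_{i_0}^m$ with $P_m$ independent of $\epsilon_{i_0}$; the hypothesis says $P = P_0$. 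Substituting, $P_0$ (with the \emph{other} $\epsilon_j$'s replaced by $-H_j/\tau_j$, $j\neq i_0$) equals zero in $A'_k[\bx]_\sigma$. But $P_0$ does not involve $\epsilon_{i_0}$, so after clearing denominators this is a polynomial identity in the ring $A''_k[\bx]$ where $A''_k$ additionally omits $\epsilon_{i_0}$'s variable, localized at $\sigma$; and crucially $\epsilon_{i_0}$ does not appear, so this identity holds without needing $\epsilon_{i_0}$ at all, yielding that $P \in (F_0,\dots,\widehat{F_{i_0}},\dots,F_n, \text{stuff}) $ ...

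Let me restructure the argument so the main obstacle is isolated. The cleanest route: consider the ring $\tilde{C} := C/(U_{i_0,\balpha_1,\dots,\balpha_r})$, where we have set that one coefficient to zero. This amounts to replacing $F_{i_0}$ by $\tilde{F}_{i_0} := F_{i_0} - U_{i_0,\balpha_1,\dots,\balpha_r}\bx_1^{\balpha_1}\cdots\bx_r^{\balpha_r}$, which is a \emph{generic} multihomogeneous polynomial missing one monomial — and in particular its coefficient ideal in $\tilde{A}_k$ (the truncated coefficient ring) is still generated by nonzerodivisors, so $\tilde F_{i_0}$ is a nonzerodivisor in $\tilde C$ and, by the same Dedekind–Mertens argument as in Lemma~\ref{lem-reqular-sequence}, $F_0,\dots,\tilde F_{i_0},\dots,F_n$ still form a regular sequence after inverting any product $x_{1,j_1}\cdots x_{r,j_r}$. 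Since $P$ is independent of $U_{i_0,\balpha_1,\dots,\balpha_r}$, it descends to $\tilde P \in \tilde C$, and $P \in I^{\sat}$ implies $\tilde P \in \tilde I^{\sat}$ where $\tilde I = (F_0,\dots,\tilde F_{i_0},\dots,F_n)$. Now $\tilde P$ has multidegree $\bdelta$, which equals $\sum_i \bd_i - (n_1+1,\dots,n_r+1)$; but the ``critical degree'' for the system $F_0,\dots,\tilde F_{i_0},\dots,F_n$ — where one of the polynomials now has \emph{strictly smaller support} — I would compare with the honest generic critical degree. The point is that setting a coefficient to zero can only \emph{shrink} $I^{\sat}/I$ in degree $\bdelta$: more precisely, I would show $(\tilde I^{\sat}/\tilde I)_\bdelta = 0$ by showing $\bdelta$ lies in the analogue of $\Gamma_0 \cup \Gamma_1 \cup (\text{complement of }\bdelta-\NN^r)$ for the degenerate system — equivalently, arguing via Lemma~\ref{lem:MGallcoeffs}'s own proof context that a twisted Jacobian of the degenerate system has degree strictly smaller than $\bdelta$ in at least one variable block, since $\tilde F_{i_0}$ can be written with one fewer ``leading'' term. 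Concluding, $\tilde P \in \tilde I$, and lifting back, $P \in I + (U_{i_0,\balpha_1,\dots,\balpha_r}) \cdot C$; combined with multihomogeneity in $\bU$ (degree $\mathbf{e}_i$ in each block) and a degree count, the term involving $U_{i_0,\balpha_1,\dots,\balpha_r}$ must itself lie in $I$, forcing $P \in I$, i.e. $P = 0$ in $B$.

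\textbf{Main obstacle.} The delicate step is the vanishing $(\tilde I^{\sat}/\tilde I)_\bdelta = 0$ for the system with one degenerate generic polynomial: one must verify that removing a single monomial from $F_{i_0}$ strictly drops the top twisted-Jacobian degree in some block $j$ (so that $\bdelta$ is no longer the critical multidegree for the degenerate system), which requires choosing the coordinates/monomial ordering so that the missing monomial genuinely interacts with the decomposition \eqref{eq:LambdaDecomp}. Equivalently, one shows every entry-expansion of the determinant $\Dc$ producing a degree-$\bdelta$ contribution must use the coefficient $U_{i_0,\balpha_1,\dots,\balpha_r}$ somewhere — this is a careful bookkeeping argument on the block structure of the matrix \eqref{eq:MGMatrixD}, and it is the combinatorial heart of the proof. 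An alternative to sidestep it is to invoke Corollary~\ref{cor- main theorem} together with the $\Gamma_i$-description to locate $\bdelta$ relative to the degenerate system's exceptional regions, but the coordinate-change bookkeeping seems unavoidable either way.
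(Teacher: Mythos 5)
Your restructured route---setting the single coefficient to zero and working with $\tilde{I}=(F_0,\ldots,\tilde{F}_{i_0},\ldots,F_n)$---has two gaps that do not close. First, the claimed vanishing $(\tilde{I}^{\sat}/\tilde{I})_\bdelta=0$ is not established, and the paper's machinery does not supply it: Theorem~\ref{thm: duality} and the $\Gamma_i$-regions concern the \emph{generic} $(n+1)$-polynomial system and give no control once a coefficient has been pinned to zero; you flag this yourself as the main obstacle. Second, and more seriously, even granting that vanishing the lift-back step fails. From $\tilde{P}\in\tilde{I}$ you get $P\in I + U_{i_0,\balpha_1,\ldots,\balpha_r}\,C$; writing $P=J+U_{i_0,\balpha_1,\ldots,\balpha_r}Q$ with $J=\sum_j G_jF_j\in I$ and extracting the $U_{i_0,\balpha_1,\ldots,\balpha_r}^{0}$-component of both sides lands $P$ in the ideal $(F_0,\ldots,\tilde{F}_{i_0},\ldots,F_n)$ (each $G_jF_j$ gets specialized, and $F_{i_0}$ becomes $\tilde{F}_{i_0}$), which is a different ideal from $I$. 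The appeal to $\bU$-multihomogeneity cannot repair this: the lemma does not assume $P$ is $\bU$-multihomogeneous, and the fact that a nonzero degree-$\bdelta$ class is linear in each coefficient block is in the paper a \emph{consequence} of this lemma (it is used in the proof of Proposition~\ref{prop:JPJ}), so assuming it here risks circularity.

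The missing idea is the one you were circling in your first paragraph before abandoning it: rather than setting $U_{i_0,\balpha_1,\ldots,\balpha_r}$ to zero, substitute it away, and substitute \emph{only that one coefficient}---not all the $\epsilon_j$ as in Lemma~\ref{lem: appendix}. Since $P\in I^{\sat}$ there is a relation $\bx_1^{\bbeta_1}\cdots\bx_r^{\bbeta_r}P=\sum_i G_iF_i$; in the localization $C_{\bx_1^{\balpha_1}\cdots\bx_r^{\balpha_r}}$ send $U_{i_0,\balpha_1,\ldots,\balpha_r}\mapsto -\bx_1^{-\balpha_1}\cdots\bx_r^{-\balpha_r}\tilde{F}_{i_0}$. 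This annihilates $F_{i_0}$ while leaving $P$ and all $F_j$, $j\ne i_0$, untouched, since by hypothesis none of them involve $U_{i_0,\balpha_1,\ldots,\balpha_r}$. Clearing denominators gives $\bx_1^{\bgamma_1}\cdots\bx_r^{\bgamma_r}P\in(F_0,\ldots,\widehat{F_{i_0}},\ldots,F_n)\subset C$; Remark~\ref{rem:TFmon} upgrades this to $P\in(F_0,\ldots,\widehat{F_{i_0}},\ldots,F_n)^{\sat}$; and Corollary~\ref{cor: elimination ideal} applied to this \emph{$n$-polynomial} generic system shows the degree-$\bdelta$ component of the saturation is trivial, so $P\in I$, a contradiction. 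The decisive move you miss is dropping to an $n$-polynomial system, where Corollary~\ref{cor: elimination ideal} gives the needed triviality of inertia forms; staying with a degenerate $(n+1)$-polynomial system forfeits all the structural control the paper has established.
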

\begin{proof} As $I$ is independent on the order of polynomials $F_0,\ldots,F_n$, it is sufficient to prove the claim for the coefficients of $F_0$. Suppose that there exist $\balpha_1,\ldots,\balpha_r$, with  $|\balpha_j|=d_{0,j}$, such that $P$ does not depend on the coefficient $U_{0,\balpha_1,\ldots,\balpha_r}$ of the polynomial $F_0$. In order to emphasize this coefficient we rewrite $F_0$ as
$$F_0=  U_{0,\balpha_1,\ldots,\balpha_r} \bx_1^{\balpha_1}\bx_2^{\balpha_2}\ldots \bx_r^{\balpha_r} + \tilde{F_0}.	
$$
As $P\in I^{\sat}$, there exist $\bbeta_1,\ldots,\bbeta_r$ and polynomials $G_0,\ldots,G_n$ such that 
\begin{equation}\label{eq:MGsatequality}
\bx_1^{\bbeta_1}\bx_2^{\bbeta_2}\ldots \bx_r^{\bbeta_r}P=G_0F_0+G_1F_1+\cdots+G_nF_n \in I.
\end{equation}	
Now, in the localized ring $C_{\bx_1^{\balpha_1}\ldots \bx_r^{\balpha_r}}$ we substitute the coefficient $U_{0,\balpha_1,\ldots,\balpha_r}$ by the element $- \bx_1^{-\balpha_1}\bx_2^{-\balpha_2}\ldots \bx_r^{-\balpha_r}\tilde{F_0}$ in \eqref{eq:MGsatequality} and we deduce that 
 $$\bx_1^{\bbeta_1}\bx_2^{\bbeta_2}\ldots \bx_r^{\bbeta_r}P \in (F_1,\ldots,F_n) \subset C_{\bx_1^{\balpha_1}\ldots \bx_r^{\balpha_r}}.$$ 
It follows that there exist $\bgamma_1,\ldots,\bgamma_r$ such that 
$$\bx_1^{\bgamma_1}\bx_2^{\bgamma_2}\ldots \bx_r^{\bgamma_r}P \in (F_1,\ldots,F_n) \subset C.$$
From here, using Lemma \ref{lem: appendix} and Remark \ref{rem:TFmon}, we deduce that $P$ belongs  to $(F_1,\ldots,F_n)^{\sat}$. In addition, $P$ is of degree $\bdelta$ and since  
$$\delta_j=\sum_{i=0}^n d_{i,j}-(n_j+1)>\sum_{i=1}^n d_{i,j}-(n_j+1)$$
for all $j=1,\ldots,r$, Corollary \ref{cor: elimination ideal} implies that $(F_1,\ldots,F_n)^{\sat}_\bdelta=(F_1,\ldots,F_n)_\bdelta$. Therefore, we get a contradiction with the fact that the class of $P$ in $B$ is nonzero.
\end{proof}

\begin{proof}[Proof of Proposition \ref{prop:JPJ}.] We first prove that $\Delta\neq 0$. We proceed by induction on $n=n_1+\ldots+n_r$ (recall that $n_j\geq 1$ for all $j$). 
	
	If $n=1$ then $r=1$, $n_1=1$ and hence we are dealing with two single graded homogeneous polynomials in two variables. The claimed result hence follows from the properties of Sylvester forms in the single graded setting; see Section \ref{subsec:SGSylvForm}. 
	
	Assume $n>1$. If $\Delta=0$ then there exist multihomogeneous polynomials $G_0,\ldots,G_n$ in $C$ such that
\begin{equation}\label{eq:MGLambda1}
\Lambda = x_{1,n_1}^{v_1}x_{2,n_2}^{v_2}\ldots x_{r-1,n_{r-1}}^{v_{r-1}} \Dc = G_0F_0+G_1F_1+\cdots+G_nF_n.	
\end{equation}	 
By the construction of the determinant $\Dc$, there exist multihomogeneous polynomials $D_0,\ldots,D_n$ in $C$ such that 
\begin{equation}\label{eq:MGLambda2}
x_{1,n_1}x_{2,n_2}\ldots,x_{r,n_r} \Dc = D_0F_0+D_1F_1+\cdots+D_nF_n.	
\end{equation}
Indeed, multiplying the last column of \eqref{eq:MGMatrixD} by $x_{1,n_1}x_{2,n_2}\ldots,x_{r,n_r}$ gives a matrix whose determinant is equal to \eqref{eq:MGLambda2}. But by definition, one can add suitable multiples of the other columns of this matrix to the last one so that this last column is composed, from top to bottom, of $F_0,\ldots,F_n$. Thus, by developing this determinant with respect to the last column we get the claimed formula; for instance $D_n$ is nothing but the determinant of the top left $(n\times n)$-minor of \eqref{eq:MGMatrixD}.

Combining \eqref{eq:MGLambda1} and \eqref{eq:MGLambda2} we get that $\sum_{i=0}^n H_iF_i=0$ where for all $i=0,\ldots,n,$
$$ H_i=x_{1,n_1}^{v_1-1}x_{2,n_2}^{v_2-1}\ldots x_{r-1,n_{r-1}}^{v_{r-1}-1}D_i-x_{r,n_r}G_i \in C.$$
The polynomial $H_i$ is of degree $$(\delta_1-d_{i,1},\delta_2-d_{i,2},\ldots,\delta_{r-1}-d_{i,r-1},\delta_r-d_{i,r}+1)
=\bdelta-\bd_i+(0,\ldots,0,1).$$
We deduce that $(H_0,\ldots,H_n)$ belongs to the first syzygy module $\Syz(F_0,\ldots,F_n)$ of the polynomials $F_0,\ldots,F_n$; taking grading into account, $\Syz(F_0,\ldots,F_n)\subset \oplus_{i=0}^n C(-d_i)$ and $(H_0,\ldots,H_n)$ is a syzygy of degree $\bdelta+(0,\ldots,0,1)$. By Corollary \ref{cor: syz}, we deduce that this syzygy is a Koszul syzygy and hence  
\begin{equation}\label{eq:MGKoszulSyz}
H_n=x_{1,n_1}^{v_1-1}x_{2,n_2}^{v_2-1}\ldots x_{r-1,n_{r-1}}^{v_{r-1}-1}D_n-x_{r,n_r}G_n \in (F_0,\ldots,F_{n-1})
\end{equation} 
Consider the specialization that sends $x_{r,n_r}$ to $0$. Introducing the notation $\bar{P}=P(x_{r,n_r}=0)$ for all $P\in C$, from \eqref{eq:MGKoszulSyz} we obtain
\begin{equation}\label{eq:MGreducVar}
x_{1,n_1}^{v_1-1}x_{2,n_2}^{v_2-1}\ldots x_{r-1,n_{r-1}}^{v_{r-1}-1}\bar{D}_n \in (\bar{F}_0,\ldots,\bar{F}_{n-1}).	
\end{equation}
On the other hand, $\bar{D}_n$ is equal to the determinant $\Dc(\bar{F}_0,\ldots,\bar{F}_{n-1})$, which is constructed similarly to \eqref{eq:MGMatrixD} from the multihomogeneous polynomials $\bar{F}_0,\ldots,\bar{F}_{n-1}$ in the set of variables $\bx_1,\ldots,\bx_{r-1}$ and $(x_{r,0},\ldots,x_{r,n_r-1})$ (see the comment after \eqref{eq:MGLambda2}).  
Therefore, if $n_r\geq 2$, \eqref{eq:MGreducVar} shows that $\Delta(\bar{F}_0,\ldots,\bar{F}_{n-1})$, which is by definition the class of 
$$x_{1,n_1}^{v_1-1}x_{2,n_2}^{v_2-1}\ldots x_{r-1,n_{r-1}}^{v_{r-1}-1}\Dc(\bar{F}_0,\ldots,\bar{F}_{n-1})$$
in $C/(\bar{F}_0,\ldots,\bar{F}_{n-1})$, is equal to zero. This is in contradiction with our inductive hypothesis and hence we conclude that $\Delta\neq 0$ if $n_r\geq 2$.

If $n_r=1$ then $\bar{F}_i=x_{r,0}^{d_{i,r}}F^\flat_i(\bx_1,\ldots,\bx_{r-1})$ for all $i=0,\ldots,n-1$, where $F^\flat$ are the generic multihomogeneous polynomials in the sets of variables $\bx_1,\dots,\bx_{r-1}$. Inspecting the determinant $\bar{D}_n$, we get 
$$ \bar{D}_n = x_{r,0}^{(\sum_{i=0}^{n-1}d_{i,r})-1}\Dc(F^\flat_0,\ldots,F^\flat_{n-1}),$$
where $\Dc(F^\flat_0,\ldots,F^\flat_{n-1})$ is the determinant similar to \eqref{eq:MGMatrixD} built from the polynomials $F^\flat_0,\ldots,F^\flat_{n-1}$. Using \eqref{eq:MGreducVar}, it follows that 
\begin{equation}\label{eq:MGnr=1}
x_{1,n_1}^{n_2+\ldots+n_{r-1}}\ldots x_{r-2,n_{r-2}}^{n_{r-1}}x_{r,0}^{(\sum_{i=0}^{n-1}d_{i,r})-1}\Dc(F^\flat_0,\ldots,F^\flat_{n-1}) \in (x_{r,0}^{d_{0,r}}F^\flat_0,\ldots,x_{r,0}^{d_{n-1,r}}F^\flat_{n-1}).	
\end{equation}
But by definition \ref{def:Lambda}, $\Delta(F^\flat_0,\ldots,F^\flat_{n-1})$ is the class of 
$$x_{1,n_1}^{n_2+\ldots+n_{r-1}}\ldots x_{r-2,n_{r-2}}^{n_{r-1}} \Dc(F^\flat_0,\ldots,F^\flat_{n-1})$$
in $C/(F^\flat_0,\ldots,F^\flat_{n-1})$ and hence we deduce from \eqref{eq:MGnr=1} that 
$$x_{r,0}^{(\sum_{i=0}^{n-1}d_{i,r})-1} \Delta(F^\flat_0,\ldots,F^\flat_{n-1}) \in (x_{r,0}^{d_{0,r}}F^\flat_0,\ldots,x_{r,0}^{d_{n-1,r}}F^\flat_{n-1}),$$
which, after specializing $x_{r,0}$ to 1, shows that $\Delta(F^\flat_0,\ldots,F^\flat_{n-1})=0$, in contradiction with our inductive hypothesis. We conclude that $\Delta(F_0,\ldots,F_n)\neq 0$ if $n_r=1$, hence for all $n\geq 1$. 

\medskip

Now that we have proved that $\Delta\neq 0$, we aim to show that it is a generator of $H^0_\bb(B)_\bdelta\simeq A_k$. As the commutative ring $k$ will play an important role in what follows, we use the more precise notation $\Delta_k$. 

Let $\xi_k$ be a generator of $H^0_\bb(B_k)_\bdelta$. Thus, there exists a nonzero element $P_k \in A_k$ such that $\Delta_k=P_k\xi_k$ and we want to prove that $P_k$ is an invertible element in $A_k$. By its definition from the determinant \eqref{eq:MGMatrixD}, $\Delta_k$ is a linear form in the coefficients of each $F_i$. Since $\Delta_k=P_k\xi_k$ we deduce that $\xi_k$ is a homogeneous polynomial of degree at most 1 in the coefficients of each $F_i$, but in view of Lemma \ref{lem:MGallcoeffs}, it must be linear in the  coefficients of each $F_i$. Therefore, we deduce that $P_k\in k$. 

Let $p$ be any prime integer and set $\ZZ_p=\ZZ/p\ZZ$. By definition of $\Delta_k$ by means of a determinant, the class of $\Delta_\ZZ$ in $B_{\ZZ_p}=B_\ZZ\otimes_\ZZ \ZZ_p$ is equal to $\Delta_{\ZZ_p}$. As we have proved that  $\Delta_{\ZZ_p}\neq 0$, we deduce that the class of $P_\ZZ \xi_\ZZ$ in $B_{\ZZ_p}$ is nonzero. In particular, the class of $P_\ZZ$ in $\ZZ_p$ is nonzero. It follows that $P_\ZZ$ is an invertible element in $\ZZ$, i.e.~$P_\ZZ=\pm 1$, and hence that $\Delta_\ZZ$ is a generator of $H^0_\bb(B_\ZZ)_\bdelta$.

Now, by Lemma \ref{lem: appendix} we know that 
$$H^0_\bb(B_k)=\ker(B_k \rightarrow (B_k)_\sigma),$$
where $\sigma$ is the monomial $\prod_i x_{i,n_i}$ and the map is the canonical localization map. We deduce that we have the following commutative diagram of canonical maps where the two rows are exact:
$$\xymatrix{
0 \ar[r] & H^0_\bb(B_k)_\bdelta  \ar[r] & (B_k)_\bdelta \ar[r] & ((B_k)_{\sigma})_\bdelta  \\
         & H^0_\bb(B_\ZZ)_\bdelta\otimes_\ZZ k \ar[r] \ar[u]^{\gamma} &  (B_\ZZ)_\bdelta \otimes_\ZZ k \ar[r] \ar[u]^*[@]{\sim}  & ((B_\ZZ)_{\sigma})_\bdelta \otimes_\ZZ k \ar[u]^*[@]{\sim}.}
$$
By chasing diagram, it follows that the map $\gamma$ is surjective. But we already proved that the multiplication map 
$$ \varphi_\ZZ : A_\ZZ \rightarrow H^0_\bb(B_\ZZ)_\bdelta : Q \mapsto Q \Delta_\ZZ$$
is an isomorphism. It remains an isomorphism after tensorization by $k$ over $\ZZ$ and hence by composition with $\gamma$ we get a surjective map 
$$ \varphi_k : A_k \simeq A_\ZZ\otimes_\ZZ k \rightarrow H^0_\bb(B_k)_\bdelta : Q \mapsto Q\Delta_k.$$
We deduce that there exists an element $Q_k \in A_k$ such that $Q\Delta_k=\xi_k$. But since $\Delta_k=P_k\xi_k$ we obtain that $Q_kP_k=1$ in $A_k$, so $P_k$ is an invertible element (in $k$) and hence $\Delta_k$ is a generator of $H^0_\bb(B_k)_\bdelta$.
\medskip

To conclude, observe that the above proof applies regardless the choice of order for the variables and the polynomials, as well as the decompositions used to build the determinant $\Lambda_k$. But since we proved that the class $\Delta_k$ of $\Lambda_k$ is a generator of $H^0_\bb(B_k)_\bdelta$, we deduce that this class is independent of all these choices, up to multiplication by an invertible element in $k$.
\end{proof}

Before closing this section, we explain why the determinant $\Lambda$ is called a twisted Jacobian determinant. Partial derivatives and the Euler formula can be used to get decompositions similar to the ones used to define the determinant $\Dc$. Indeed, pick an integer $i\in\{0,\ldots,n\}$, by the Euler formula 
$$ d_{i,1}F_i=x_{1,0}\frac{\partial{F_i}}{\partial x_{1,0}}+x_{1,1}\frac{\partial{F_i}}{\partial x_{1,1}}+\cdots+x_{1,n_1}\frac{\partial{F_i}}{\partial x_{1,n_1}},$$
which provides a decomposition similar to \eqref{eq:LambdaDecomp1} with the difference that we need to multiply $F_i$ by $d_{i,1}$. Applying Euler formula to $\frac{\partial{F_i}}{\partial x_{1,n_1}}$ with respect to the variables $\bx_2$ implies
$$d_{i,2} \frac{\partial F_i}{\partial x_{1,n_1}} =
x_{2,0} \frac{\partial^2 F_i}{\partial x_{1,n_1}\partial x_{2,0}}+
x_{2,1} \frac{\partial^2 F_i}{\partial x_{1,n_1}\partial x_{2,1}}+\cdots+
x_{2,n_2} \frac{\partial^2 F_i}{\partial x_{1,n_1}\partial x_{2,n_2}},$$
which is very similar to \eqref{eq:LambdaDecomp2}. Continuing this way one can build a determinant similar to $\Dc$ where the entries are replaced by partial derivatives. We denote by $\Jc(F_0,\ldots,F_n)$ this Jacobian determinant. Comparing the degrees, we see that $\Jc$ and $\Dc$ have the same degree with respect to each set of variables $\bx_i$ and each set of coefficients of any polynomial $F_j$. Thus, denoting by $J(F_0,\ldots,F_n)$ the class of 
$$\left(\prod_{i=1}^{r-1} x_{i,n_i}^{v_i}\right) \Jc(F_0,\ldots,F_n)$$
in $(B_k)_\bdelta$, $J$ and $\Delta$ are expected to differ by a multiplicative element in $k$. Actually, one can show that \cite[Proposition III.2.6]{ChThesis88}
	$$J(F_0,\ldots,F_n)=\left(\prod_{\substack{ 0\leq i \leq n \\ 1\leq k \leq r}} d_{i,j} \right) \Delta_0 (F_0,\ldots,F_n)$$
in $(B_\ZZ)_\bdelta\simeq A_\ZZ$. So, the twisted Jacobian $\Delta_0$ provides a generator of $H^0_\bb(B_k)_\bdelta$ for any commutative ring $k$, whereas the Jacobian $J$ is sensitive to finite characteristic settings.

\subsection{Multigraded Sylvester forms}\label{sec:MultiSylvForms}
In this section, we introduce multigraded Sylvester forms which are generalizations of the twisted Jacobian determinant. These forms provide additional nonzero elements in $I^{\sat}/I$ of degree lower than $\bdelta$ (component-wise) and generate  some  graded components under suitable assumptions.

\medskip

For all $j\in \{1,\ldots,r\}$,  choose a multi-index of non negative integers $\balpha_j=(\alpha^{(j)}_0,\ldots,\alpha^{(j)}_{n_j})$ such that 
$$|\balpha_j|=\sum_{i=0}^n \alpha^{(j)}_i < \min_{i\in\{0,\ldots,n\}} d_{i,j}.$$ 
Under these assumptions one can always decompose each polynomial $F_i$ as follows: 
\begin{multline}\label{eq:decompSylvFi}
F_i=  \sum_{j=0}^{n_1-1} x_{1,j}^{\alpha^{(1)}_{j}+1}F_{i,j}^{(1)}+ 
x_{1,n_1}^{\alpha^{(1)}_{n_1}+1} \left(\sum_{j=0}^{n_2-1} x_{2,j}^{\alpha^{(2)}_{j}+1}F_{i,j}
^{(2)} \right. \\
\left. + x_{2,n_2}^{\alpha^{(2)}_{n_2}+1} \left( \cdots + x_{r-1,n_{r-1}}^{\alpha^{(r-1)}_{n_{r-1}}+1} \left( 
\sum_{j=0}^{n_r} x_{r,j}^{\alpha^{(r)}_{j}+1}F_{i,j}^{(r)}\right)\right)\right)
\end{multline}
where $F_{i,j}^{(l)}$ are multihomogeneous polynomials.
Define $\Dc_{\balpha_1,\dots,\balpha_r}$ as the determinant of the following matrix:
\begin{equation}\label{eq:MGDalphaDet}
\left(
\begin{array}{ccccccccccc}
F_{0,0}^{(1)} & \cdots & F_{0,n_1-1}^{(1)} & F_{0,0}^{(2)} & \cdots & F_{0,n_2-1}^{(2)} & \cdots & F_{0,n_{r-1}-1}^{(r-1)} & F_{0,0}^{(r)} & \cdots & F_{0,n_r}^{(r)} \\
\vdots & & \vdots & \vdots & & \vdots & & \vdots & \vdots & & \vdots \\
F_{n,0}^{(1)} & \cdots & F_{n,n_1-1}^{(1)} & F_{n,0}^{(2)} & \cdots & F_{n,n_2-1}^{(2)} & \cdots & F_{n,n_{r-1}-1}^{(r-1)} & F_{n,0}^{(r)} & \cdots & F_{n,n_r}^{(r)}
\end{array}\right).	
\end{equation}
\medskip

From its definition, it is straightforward to check that $\Dc_{\balpha_1,\dots, \balpha_r}$ is a multihomogeneous polynomial. For all $i=0,\ldots,r-1$,
$$\deg_{\bx_i}(\Dc_{\balpha_1,\dots,\balpha_r})=\delta_i - \vert \balpha_i\vert - (\alpha^{(i)}_{n_i}+1)(n_{i+1}+\cdots+ n_r)=\delta_i - \vert \balpha_i\vert - (\alpha^{(i)}_{n_i}+1)v_i,$$
and $$\deg_{\bx_r}( \Dc_{\balpha_1,\dots, \balpha_r})=\delta_r - \vert \balpha_r\vert.$$
Observe that $\Dc_{\balpha_1\dots,\balpha_r}$ is a linear form in the coefficients of each polynomial $F_i$. In addition, it is immediate to verify that $\Dc_{\balpha_1,\dots,\balpha_r}$ belongs to $I^{\sat}$ (as a consequence of Lemma \ref{lem: appendix}, it is sufficient to check that $\Dc_{\balpha_1\dots,\balpha_r}$ multiplied by a certain monomial is in $I$).

\begin{defn}\label{def:MGSylvForm}  After a choice of ordering for polynomials and variables, and a choice of decomposition, the Sylvester form of degree $\balpha_1,\dots,\balpha_r$ is defined as
$$ \Sylv_{\balpha_1,\dots,\balpha_r}:=
\left(\prod_{i=1}^{r-1} x_{i,n_i}^{(\alpha^{(i)}_{n_i}+1)v_i}\right) \Dc_{\balpha_1,\dots,\balpha_r} \in A_k[\bx_1,\dots, \bx_r].$$	
The class of $\Sylv_{\balpha_1,\dots,\balpha_r}$ in $B=C/I$ is denoted by $\sylv_{\balpha_1,\dots,\balpha_r}$ and we have 
$$\sylv_{\balpha_1,\dots,\balpha_r} \in (I^{\sat}/I)_{(\delta_1-|\balpha_1|,\dots , \delta_r-|\balpha_r|)}.$$
\end{defn}

Observe that this definition generalizes Definition \ref{def:Lambda} since $\Sylv_{(\b0,\ldots,\b0)}$, respectively $\sylv_{(\b0,\ldots,\b0)}$, is nothing but the twisted Jacobian $\Lambda$, respectively $\Delta$. The next results aim to generalize Proposition \ref{prop:JPJ} that shows that $\sylv_{(\b0,\ldots,\b0)}$ is a generator of the free   $A_k$-module  $(I^{\sat}/I)_\bdelta$ of rank one.

\begin{thm} \label{prop multiplication}
Let $\balpha_1,\dots \balpha_r$, and $\bbeta_1,\dots,\bbeta_r$ be multi-indices of non negative integers such that $|\balpha_i|=|\bbeta_i|$ for all $i=1,\ldots,r$. Then 
$$ \left(\prod_{i=1}^r\bx_i^{\bbeta_i}\right) \sylv_{\balpha_1,\ldots, \balpha_r}=
\begin{cases}
	0 \textrm{ if } (\balpha_1,\dots,\balpha_r) \neq   (\bbeta_1,\dots, \bbeta_r), \\
	\sylv_{\b0,\ldots, \b0} \ \ \mathrm{ otherwise. }
\end{cases}
$$
\end{thm}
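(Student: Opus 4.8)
The plan is to mimic the structure of the single-graded Proposition~\ref{prop:SGdualitySF}, reducing everything to the already-established ``Jacobian'' case of Proposition~\ref{prop:JPJ}. First I would fix multi-indices $\balpha_1,\dots,\balpha_r$ and $\bbeta_1,\dots,\bbeta_r$ with $|\balpha_i|=|\bbeta_i|$, together with decompositions \eqref{eq:decompSylvFi} defining $\Dc_{\balpha_1,\dots,\balpha_r}$. The key elementary observation is that multiplying the $j$-th ``block'' of columns of the matrix \eqref{eq:MGDalphaDet} by the appropriate power $x_{j,0}^{\alpha^{(j)}_0+1},\dots,x_{j,n_j}^{\alpha^{(j)}_{n_j}+1}$ — more precisely, forming suitable monomial combinations of the columns within each block — reconstructs the polynomials $F_i$ in one column, up to the accumulated twisting factor. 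Concretely, expanding along such a rebuilt column shows that $\bigl(\prod_i \bx_i^{\bbeta_i}\bigr)\Sylv_{\balpha_1,\dots,\balpha_r}$, modulo $I$, equals $\pm$ a determinant of the \emph{same shape} but built from decompositions with exponents shifted by $\bbeta_i-\balpha_i$ in each block; this is where the hypothesis $|\balpha_i|=|\bbeta_i|$ matters, since it keeps the total degree in each $\bx_j$ equal to $\delta_j-|\balpha_j|+|\bbeta_j| = \delta_j$, landing us in the top degree $\bdelta$.

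Next I would treat the two cases. If $(\balpha_1,\dots,\balpha_r)=(\bbeta_1,\dots,\bbeta_r)$, then the rebuilt determinant is (a twist of) $\Dc_{\b0,\dots,\b0}=\Dc$ itself up to elements of $I$, whence the class is $\sylv_{\b0,\dots,\b0}=\Delta$; this is essentially the single-graded identity \eqref{equation-single} carried out block by block. If the multi-indices differ, then in at least one block, say block $j$, we have $\balpha_j\neq\bbeta_j$ but $|\balpha_j|=|\bbeta_j|$, so there is a coordinate where $\beta^{(j)}_\ell > \alpha^{(j)}_\ell$; after the column manipulation the resulting determinant has, in the relevant $j$-block, a column whose entries are all divisible by $x_{j,\ell}$ in a way that forces the whole form into $I^{\sat}$ of a strictly smaller degree in $\bx_j$ than $\delta_j$. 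By Proposition~\ref{prop:JPJ} (or rather Theorem~\ref{thm: duality} / Corollary~\ref{cor- main theorem}), $(I^{\sat}/I)_{\bdelta}\simeq A_k$ is spanned by $\Delta$, and $\Delta$ genuinely depends on all coefficients of all $F_i$ by Lemma~\ref{lem:MGallcoeffs}; a determinant built from a proper sub-decomposition cannot be a nonzero multiple of $\Delta$ for degree/linearity reasons, forcing the class to vanish. More cleanly: the product lies in $(I^{\sat}/I)_{\bdelta}$, so it is $P\,\Delta$ for some $P\in A_k$; but it is linear in the coefficients of each $F_i$ and fails to involve some coefficient of some $F_i$ (because the shift $\bbeta_j-\balpha_j$ collapses a genuine dependency), so by Lemma~\ref{lem:MGallcoeffs} its class must be zero.

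I expect the main obstacle to be the bookkeeping of the twisting monomials $\prod_{i=1}^{r-1} x_{i,n_i}^{(\alpha^{(i)}_{n_i}+1)v_i}$ under the column operations: one must verify that multiplying by $\prod_i \bx_i^{\bbeta_i}$ and performing the block-wise reconstruction produces \emph{exactly} the twisting factor appearing in the definition of $\Sylv_{\b0,\dots,\b0}$ (when $\balpha=\bbeta$), with no leftover monomial that would either change the degree or need to be absorbed. The nested/recursive form of the decomposition \eqref{eq:decompSylvFi} — where the $x_{j,n_j}$ variable carries over into the next block — makes this a genuine, if routine, computation; the cleanest route is probably induction on $r$, peeling off the first block and invoking the single-graded identity \eqref{equation-single} there, exactly as in the proof of Proposition~\ref{prop:JPJ}. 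The vanishing case, once phrased via Lemma~\ref{lem:MGallcoeffs}, should then be short: identify which coefficient disappears and conclude.
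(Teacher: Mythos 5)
Your overall shape of argument is close to the paper's, but there is a genuine gap in the vanishing case, and the ``more cleanly'' version you offer is actually incorrect as stated.

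Concretely, you claim that when $(\balpha_1,\dots,\balpha_r)\neq(\bbeta_1,\dots,\bbeta_r)$ the polynomial $\bigl(\prod_i\bx_i^{\bbeta_i}\bigr)\Sylv_{\balpha_1,\dots,\balpha_r}$ ``fails to involve some coefficient of some $F_i$,'' so Lemma~\ref{lem:MGallcoeffs} forces its class to vanish. This is false: with the canonical decompositions \eqref{eq:decompSylvFi}, each coefficient of each $F_i$ appears in exactly one of the pieces $F_{i,j}^{(l)}$, so the determinant $\Dc_{\balpha_1,\dots,\balpha_r}$ (and a fortiori its product with a monomial) genuinely depends on \emph{all} coefficients. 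Multiplying by $\prod_i\bx_i^{\bbeta_i}$ does not destroy any dependence. Lemma~\ref{lem:MGallcoeffs} therefore gives no information unless you first exhibit a \emph{different} representative of the class that misses a coefficient --- and the natural way to do that is precisely to show the whole thing is in $I$, at which point you no longer need the lemma.

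The correct mechanism, and the one the paper uses, is more elementary: from the decompositions \eqref{eq:decompSylvFi} one reads off directly that multiplying $\Dc_{\balpha_1,\dots,\balpha_r}$ by a suitable ``nested'' monomial --- e.g., $x_{1,n_1}^{\alpha^{(1)}_{n_1}+1}\cdots x_{r-1,n_{r-1}}^{\alpha^{(r-1)}_{n_{r-1}}+1}\,x_{r,\ell}^{\alpha^{(r)}_{\ell}+1}$ for any $\ell$ --- lands in $I$, because the corresponding column of \eqref{eq:MGDalphaDet} can be rebuilt into the column $(F_0,\dots,F_n)^T$. Since the twisting factor already present in $\Sylv_{\balpha_1,\dots,\balpha_r}$ supplies the factors $x_{i,n_i}^{\alpha^{(i)}_{n_i}+1}$ (as $v_i\geq 1$), the extra condition $\beta^{(r)}_\ell>\alpha^{(r)}_\ell$ for some $\ell$ (forced by $\bbeta_r\neq\balpha_r$ with $|\bbeta_r|=|\balpha_r|$) yields $\bigl(\prod_i\bx_i^{\bbeta_i}\bigr)\Sylv_{\balpha_1,\dots,\balpha_r}\in I$, so the class is zero. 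The general case $\bbeta_j\neq\balpha_j$ then follows by symmetry in the blocks. Your first sketch of the vanishing case gestures at the column operations, but the statement that the manipulated determinant acquires ``a column whose entries are all divisible by $x_{j,\ell}$'' is not what happens (the column is replaced by the $F_i$'s, putting the determinant in $I$), and ``$I^{\sat}$ of a strictly smaller degree than $\delta_j$'' is not a meaningful conclusion given that the product already sits in degree $\bdelta$.

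For the case $\balpha=\bbeta$ your plan is sound in outline --- the paper establishes the literal polynomial identity $\Dc_{(\b0,\dots,\b0)}=\bx_1^{\balpha_1}x_{1,n_1}^{v_1\alpha^{(1)}_{n_1}}\cdots\bx_r^{\balpha_r}\,\Dc_{\balpha_1,\dots,\balpha_r}$ for suitably compatible decompositions, and the twisting monomials then match exactly --- but you leave the bookkeeping open, which you acknowledge. The proof is a direct calculation; no induction on $r$ is needed or used.
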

\begin{proof} We begin with some observations about the determinant $\Dc_{\balpha_1,\ldots,\balpha_r}$ defined by \eqref{eq:MGDalphaDet}. Multiply the first column of \eqref{eq:MGDalphaDet} by $x_{1,0}^{\alpha_0^{(1)+1}}$ and  add suitable multiples of the other columns  according to the decomposition \eqref{eq:decompSylvFi}, the first column would become the column vector of $F_0,\ldots,F_n$. Therefore
$$x_{1,0}^{\alpha_0^{(1)+1}}\Dc_{\balpha_1,\ldots,\balpha_r} \in I.$$
With the same argument, 
\begin{align}\label{eq:MGrelationDc}
	\nonumber
 	x_{1,j}^{\alpha_j^{(1)}+1}\Dc_{\balpha_1,\ldots,\balpha_r} & \in I, \ j=0,\ldots,n_1-1,\\ 	\nonumber
	x_{1,n_1}^{\alpha_{n_1}^{(1)}+1} x_{2,j}^{\alpha_j^{(2)}+1}\Dc_{\balpha_1,\ldots,\balpha_r} & \in I, \ j=0,\ldots,n_2-1,\\ 	\nonumber
	\vdots & \\ 	\nonumber
	x_{1,n_1}^{\alpha_{n_1}^{(1)}+1}\ldots x_{r-2,n_{r-2}}^{\alpha_{n_{r-2}}^{(r-2)}+1}
	x_{r-1,j}^{\alpha_{j}^{(r-1)}+1}\Dc_{\balpha_1,\ldots,\balpha_r} & \in I, \ j=0,\ldots,n_{r-1}-1,\\ 
	x_{1,n_1}^{\alpha_{n_1}^{(1)}+1}\ldots x_{r-2,n_{r-2}}^{\alpha_{n_{r-2}}^{(r-2)}+1}
	x_{r-1,n_{r-1}}^{\alpha_{n_{r-1}}^{(r-1)}+1}x_{r,j}^{\alpha_{j}^{(r)}+1}\Dc_{\balpha_1,\ldots,\balpha_r} & \in I, \ j=0,\ldots,n_{r}.
\end{align}
By Definition \ref{def:MGSylvForm}
$$\bx_1^{\bbeta_1}\ldots\bx_r^{\bbeta_r}\Sylv_{\balpha_1,\ldots,\balpha_r}=\bx_1^{\bbeta_1}\ldots\bx_r^{\bbeta_r}
x_{1,n_1}^{(\alpha^{(1)}_{n_1}+1)v_1} \ldots x_{r-1,n_{r-1}}^{(\alpha^{(r-1)}_{n_{r-1}}+1)v_{r-1}}
 \Dc_{\balpha_1,\dots,\balpha_r}$$
where $v_i$ is a positive integer for all $i=1,\ldots,r-1$. Using \eqref{eq:MGrelationDc}, we deduce that if $\beta_j^{(r)}>\alpha_j^{(r)}$ from some $j\in\{0,\ldots,n_r\}$ then $\bx_1^{\bbeta_1}\ldots\bx_r^{\bbeta_r}\sylv_{\balpha_1,\ldots,\balpha_r}=0$. But since $|\bbeta_r|=|\balpha_r|$ by the assumption, this condition is equivalent to $\bbeta_r\neq \balpha_r$. 
In addition, since the class $\sylv_{\balpha_1,\ldots,\balpha_r}$ is independent on the order of variables chosen to build the determinant $\Dc_{\balpha_1,\ldots,\balpha_r}$ (see Proposition \ref{prop:JPJ}), one can show by a similar argument that if $\bbeta_j\neq \balpha_j$ for some $j\in\{1,\ldots,r\}$, i.e.~if $(\balpha_1,\ldots,\balpha_r)\neq(\bbeta_1,\ldots,\bbeta_r)$, then $\bx_1^{\bbeta_1}\ldots\bx_r^{\bbeta_r}\sylv_{\balpha_1,\ldots,\balpha_r}=0$.

To conclude the proof, it remains to show that $$\bx_1^{\balpha_1}\ldots\bx_r^{\balpha_r}\sylv_{\balpha_1,\ldots,\balpha_r}=\sylv_{(\b0,\ldots,\b0)}.$$
For that purpose, starting from decompositions of the form \eqref{eq:decompSylvFi} to build the determinant $\Dc_{\balpha_1,\ldots,\balpha_r}$, observe that one can multiply the polynomials $F_{i,j}^{l}$ by suitable monomials to get decompositions of the form \eqref{eq:LambdaDecomp} that are used to build the determinant $\Dc_{(\b0,\ldots,\b0)}$. In this way, using appropriate decompositions, we identify that 
$$ \Dc_{(\b0,\ldots,\b0)}=\bx_1^{\balpha_1} x_{1,n_1}^{v_1\alpha_{n_1}^{(1)}}
\bx_2^{\balpha_2} x_{2,n_2}^{v_2\alpha_{n_2}^{(2)}}\ldots \bx_{r-1}^{\balpha_{r-1}} x_{r-1,n_{r-1}}^{v_{r-1}\alpha_{n_{r-1}}^{(r-1)}}\bx_r^{\balpha_r}\Dc_{\balpha_1,\ldots,\balpha_r}.$$
By Definition \eqref{def:Lambda} and Definition \eqref{def:MGSylvForm} we deduce that
\begin{align*}
	\sylv_{(\b0,\ldots,\b0)} &= x_{1,n_1}^{v_1}\ldots x_{r-1,n_{r-1}}^{v_{r-1}} \Dc_{(\b0,\ldots,\b0)} \\
	 &= \bx_1^{\balpha_1} x_{1,n_1}^{v_1\alpha_{n_1}^{(1)}+v_1}
\ldots \bx_{r-1}^{\balpha_{r-1}} x_{r-1,n_{r-1}}^{v_{r-1}\alpha_{n_{r-1}}^{(r-1)}+v_{r-1}}\bx_r^{\balpha_r}\Dc_{\balpha_1,\ldots,\balpha_r}\\
    &=\bx_1^{\balpha_1}\ldots\bx_r^{\balpha_r}\sylv_{\balpha_1,\ldots,\balpha_r},
\end{align*}
as claimed.
\end{proof}

\begin{rem}\label{rem:ref} As pointed out by an anonymous reviewer, in the case $k$ is the field of complex numbers, the above result can be deduced from \cite{CCD97}, as a consequence of the Global Transformation Law \cite[Theorem 0.1]{CCD97}, which encapsulates  Sylvester-like decompositions, and \cite[Proposition 2.4]{CCD97}.
\end{rem}

\begin{thm}\label{thm:main} For all $j=1,\ldots,r$ let $\mu_j$ be an integer such that $0\leq \mu_j <  \min_i d_{i,j}$ and set $\bmu=(\mu_1,\ldots,\mu_r)$. Then, 
the set of multigraded Sylvester forms
$$\left\{\sylv_{\balpha_1,\ldots,\balpha_r} \right\}_{|\balpha_j|=\mu_j, \ j=1,\ldots,r }$$
yields an $A_k$-basis of the free $A_k$-module $(I^{\sat}/I)_{\bdelta-\bmu}$.
\end{thm}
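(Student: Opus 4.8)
The plan is to reduce the statement to the duality of Theorem~\ref{thm: duality} combined with the multiplication formulas of Theorem~\ref{prop multiplication}, by a formal perfect--pairing argument. Throughout I would write $N := \prod_{j=1}^r \binom{\mu_j+n_j}{n_j}$ for the number of Sylvester forms occurring in the claimed basis.

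First I would extract two consequences of the hypothesis $0\leq \mu_j < \min_i d_{i,j}$ for all $j$. Since $\mu_j < d_{i,j}$ for every $i$ and $j$, the multidegree $\bmu-\bd_i$ has all entries negative, so $C_{\bmu-\bd_i}=0$ for all $i$; hence $I_\bmu=0$ and $(C/I)_\bmu=C_\bmu$ is a free $A_k$-module of rank $N$ with basis the monomials $\bx_1^{\balpha_1}\cdots\bx_r^{\balpha_r}$ with $|\balpha_j|=\mu_j$. On the other hand, $\bdelta-\bmu$ lies in $\bdelta-(\min_i d_{i,1}-1,\ldots,\min_i d_{i,r}-1)+\NN^r$, so Lemma~\ref{lem: bdelta-di, Gamma} gives $\bdelta-\bmu\notin\Gamma_0\cup\Gamma_1$, and Theorem~\ref{thm: duality} yields $(I^{\sat}/I)_{\bdelta-\bmu}\cong\Hom_{A_k}(C_\bmu,A_k)$; in particular this module is free of rank $N$.

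Next I would consider the $A_k$-bilinear multiplication pairing
\[
C_\bmu \times (I^{\sat}/I)_{\bdelta-\bmu} \longrightarrow (I^{\sat}/I)_{\bdelta} \cong A_k,
\]
where the last isomorphism sends the twisted Jacobian class $\sylv_{\b0,\ldots,\b0}=\Delta$ to $1$ (legitimate by Proposition~\ref{prop:JPJ}). By Theorem~\ref{prop multiplication}, evaluating this pairing on a monomial basis vector $\bx_1^{\bbeta_1}\cdots\bx_r^{\bbeta_r}$ (with $|\bbeta_j|=\mu_j$) and a Sylvester form $\sylv_{\balpha_1,\ldots,\balpha_r}$ gives $1$ if $(\balpha_1,\ldots,\balpha_r)=(\bbeta_1,\ldots,\bbeta_r)$ and $0$ otherwise. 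Hence the induced $A_k$-linear map $\Phi^{\sharp}:(I^{\sat}/I)_{\bdelta-\bmu}\to\Hom_{A_k}(C_\bmu,A_k)$ sends each Sylvester form $\sylv_{\balpha_1,\ldots,\balpha_r}$ to the dual basis functional of the corresponding monomial, so its image contains the whole dual basis and $\Phi^{\sharp}$ is surjective. Finally, a surjective $A_k$-linear map between free $A_k$-modules of the same finite rank is an isomorphism; applying this to $\Phi^{\sharp}$ (both sides free of rank $N$ by the first step) and using that an isomorphism pulls a basis back to a basis, one concludes that $\{\sylv_{\balpha_1,\ldots,\balpha_r}\}_{|\balpha_j|=\mu_j}$ is an $A_k$-basis of $(I^{\sat}/I)_{\bdelta-\bmu}$.

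The only genuinely new ingredient is Theorem~\ref{prop multiplication}, which is already established; given it, the argument is purely formal, so I do not expect a real obstacle. The point needing a little care is the bookkeeping that the rank of $(I^{\sat}/I)_{\bdelta-\bmu}$ predicted by duality matches the number of Sylvester forms --- equivalently, that $(C/I)_\bmu=C_\bmu$ holds exactly on this range --- together with the observation that one does \emph{not} need the duality isomorphism of Theorem~\ref{thm: duality} to be realized by the multiplication pairing: that theorem is used only to guarantee that the target of $\Phi^{\sharp}$ is free of rank $N$, while $\Phi^{\sharp}$ itself supplies the required surjection.
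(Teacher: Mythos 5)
Your proof is correct and uses the same two ingredients as the paper (Theorem~\ref{thm: duality} and the orthogonality relations of Theorem~\ref{prop multiplication}), but the final step is organized slightly differently, and the difference is a genuine tightening. The paper's proof concludes by asserting that the abstract duality isomorphism $H^0_\bb(B)_{\bdelta-\bmu}\xrightarrow{\sim}\Hom_{A_k}(C_\bmu, H^0_\bb(B)_\bdelta)$ coming from the spectral-sequence comparison is \emph{realized} by the multiplication pairing, and then reads off the basis. This compatibility is plausible but is stated without verification. You avoid that issue entirely: you use Theorem~\ref{thm: duality} (via Lemma~\ref{lem: bdelta-di, Gamma} to check $\bdelta-\bmu\notin\Gamma_0\cup\Gamma_1$) only to establish that $(I^{\sat}/I)_{\bdelta-\bmu}$ is free of rank $N=\prod_j\binom{\mu_j+n_j}{n_j}$, observe that the multiplication-pairing map $\Phi^{\sharp}$ hits the entire dual monomial basis by Theorem~\ref{prop multiplication} and is therefore surjective, and then invoke the standard fact (a consequence of Cayley--Hamilton/Nakayama) that a surjective $A_k$-linear map between free modules of the same finite rank over a commutative ring is an isomorphism. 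The rank-counting step also correctly depends on the observation that $(C/I)_\bmu=C_\bmu$ on the range $0\le\mu_j<\min_i d_{i,j}$, which you make explicit. Net effect: same toolkit, but a more self-contained argument at the end that sidesteps a compatibility check the paper leaves implicit.
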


\begin{proof}
Since  $n_i\geq 1$ for every $1\leq i\leq r$, every degree in $\Gamma_0\cup \Gamma_1$ has at least one coordinate, namely $\ell$,  which is greater than or equal to $\min_j d_{j,k}$.  Hence, in this case, $\boldsymbol{\mu}\notin 
\Gamma_0\cup \Gamma_1$, therefore by Theorem \ref{thm: duality}, 
$$H^0_\bb(B)_{\bdelta- \boldsymbol{\mu}} \xrightarrow{\sim} \mathrm{Hom}_A(C_{\boldsymbol{\mu}},H^0_\bb(B)_{\delta})
\xrightarrow{\sim} {\check{C}}_{\boldsymbol{\mu}}.$$
	By definition,
	$${\check{C}}_{\boldsymbol{\mu}}:= \mathrm{Hom}_A(C_{\boldsymbol{\mu}},A)$$ 
	and the canonical $A$-basis of this free $A$-module is identified with the multihomogeneous monomials in the sets of variables  $\bx_1,\ldots, \bx_r$ of degree $\bmu$:
	\begin{eqnarray*}
		(\bx_1^{\balpha_1}\ldots \bx_r^{\balpha_r})^{\check{ }} : C_{\bmu} & \rightarrow & A \\
		(\bx_1^{\balpha_1}\ldots \bx_r^{\balpha_r})& \mapsto & 1 \\
		(\bx_1^{\balpha'_1}\ldots \bx_r^{\balpha'_r}) & \mapsto & 0 \textrm{ if } (\balpha'_1,\ldots, \balpha'_r)\neq (\balpha_1,\ldots, \balpha_r).
	\end{eqnarray*}
	Therefore, by Theorem \ref{thm: duality} 
	$${\check{C}}_{\bmu} \simeq \mathrm{Hom}_A(C_{\bmu},H^0_\bb(B)_{\bdelta})$$ 
	and an $A$-basis of $\mathrm{Hom}_A(C_{\bmu},H^0_\bb(B)_{\bdelta})$ is given by $\phi \circ (\bx_1^{\balpha_1}\cdots \bx_r^{\balpha_r})^{\check{ }}$, $|\balpha_1|=\mu_1,\ldots, |\balpha|=\mu_r$. By using Proposition \ref{prop multiplication}, it turns out that the map $\phi \circ (\bx_1^{\balpha_1}\ldots \bx_r^{\balpha_r})^{\check{ }}$ is nothing but the multiplication map by $\sylv_{\balpha_1,\dots,\balpha_r}$. 
	
	Now, by the argument using spectral sequences, we have an isomorphism of $A$-modules:
	$$ H^0_\bb(B)_{\bdelta-\bmu} \xrightarrow{\sim}  \mathrm{Hom}_A(C_{\bmu},H^0_\bb(B)_{\bdelta}).$$
	The canonical map that sends $\sylv_{\balpha_1,\dots,\balpha_r} \in H^0_\bb(B)_{\bdelta- \bmu}$ to the multiplication map by $\sylv_{\balpha_1,\dots,\balpha_r}$ in $\mathrm{Hom}_A(C_{\bmu},H^0_\bb(B)_{\bdelta})$ realizes the above isomorphism.	
\end{proof}

\begin{rem}
As a consequence of the above theorem, the classes of Sylvester forms $\sylv_{\balpha_1,\dots, \balpha_{r}}$ are independent of the choice of decompositions \eqref{eq:decompSylvFi}, up to multiplication by an invertible element in $k$, that is independent of ${\balpha_1,\dots, \balpha_{r}}$. This element is $\pm 1$ if one restricts to the unique decompositions as in the beginning of this section, varying orders of variables and forms (more generally to any decomposition that lifts to the base ring $\ZZ$).
\end{rem}

\section{Application to Multigraded Elimination Matrices}\label{sec:MGelimMat}

In this section, the results obtained in Section \ref{sec:duality} and Section \ref{sec:MultiGradedSylvesterForms} are applied to build a family of elimination matrices for multihomogeneous polynomial systems.

\subsection{Hybrid elimination matrices}
Adopt Notation \ref{Notation} and define a family of matrices indexed by $\bnu \in \NN^r$ as follows. First, for all $\bnu \in \NN^r$ such that 
\begin{equation}\label{eq=nucond1}
\bnu \notin (\bdelta-\N^r)\cup \Gamma_0\cup \Gamma_1
\end{equation}
we define the matrix $\MM_\bnu$ as the matrix of the $A$-linear map
\begin{eqnarray}\label{eq:MGMacMat}
\oplus_{i=0}^n C_{\bnu-\bd_i}  & \rightarrow & C_\bnu \\ \nonumber
(G_0,\ldots,G_n) & \mapsto & \sum_{i=0}^n G_iF_i 
\end{eqnarray}
in canonical bases. This matrix is the classical multigraded Macaulay-type matrix of $F_0,\ldots,F_n$ in degree $\bnu$. Second, for all 
$\bnu \in \NN^r$ such that 
\begin{equation}\label{eq=nucond2}
\bnu=\bdelta-(\mu_1,\ldots\mu_r) \textrm{ such that } 0\leq \mu_j<\min_i d_{i,j} \textrm{ for all } j=1,\ldots,r,
\end{equation}
we define the matrix $\HH_\bnu$ as the matrix of the $A$-linear map
\begin{eqnarray}\label{eq:MGHybMat}
\oplus_{i=0}^n C_{\bnu-d_i}  \oplus_{\balpha : |\alpha_j|=\delta_j-\nu_j} A & \rightarrow & C_\bnu \\ \nonumber
(G_0,\ldots,G_n, \ldots,\ell_\balpha,\ldots) & \mapsto & \sum_{i=0}^n G_iF_i + \sum_{\balpha : |\alpha_j|=\delta_j-\nu_j}\ell_\balpha \Sylv_{\balpha}
\end{eqnarray}
in canonical bases. This matrix in an hybrid matrix: it has a Macaulay-type block and another block built from multigraded Sylvester forms of degree $\bnu$. 

We notice that the smallest matrix in the entire family made of the matrices $\HH_\bnu$ and $\MM_\bnu$ we have just defined is obtained for $\bnu=\bdelta-(\min_i d_{i,1}-1,\ldots,\min_i d_{i,r}-1)$. 

\medskip

Given a polynomial system $\bff:=\{f_0,\dots f_n\}$ of $n+1$ multihomogeneous polynomials of degree $\bd_1,\dots, \bd_{n+1}$ in $R = \mathbb{K}[\bx_0,\dots, \bx_r]$, we recall that the notation $I(\bff)$, $B(\bff)$, $B^{\sat}(\bff)$ and $\MM_\bnu(\bff)$ respectively, stand for the specialization of $I, B, B^{\sat}$ and $\MM_\nu$, respectively.

\begin{prop} If $\bnu\in \NN^r$ satisfies \eqref{eq=nucond1}, respectively \eqref{eq=nucond2}, then $\MM_\bnu$, respectively $\HH_\nu$, is a presentation matrix of the $A$-module $(B^{\sat})_\bnu$. 
	In particular, $\MM_\nu(\bff)$, respectively  $\HH_\nu(\bff)$, is surjective if and only if $\bff$ has no common zeroes in  $\PP_{\bar{\KK}}^{n_1}\times \cdots \times \PP_{\bar{\KK}}^{n_r}$. 	
\end{prop}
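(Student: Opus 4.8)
The plan is to identify, in the universal setting, the cokernel of each of these matrices with a single multigraded component of $B^{\sat}=C/I^{\sat}$, then to specialize, and finally to read off the vanishing of that component geometrically.

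\emph{Generic cokernel.} The image of the map \eqref{eq:MGMacMat} is exactly $I_\bnu$, so $\Coker(\MM_\bnu)=(C/I)_\bnu=B_\bnu$. Under \eqref{eq=nucond1} we have $\bnu\notin\Gamma_0\cup\Gamma_1$ and $\bdelta-\bnu\notin\NN^r$, so $(C/I)_{\bdelta-\bnu}=0$ and Theorem \ref{thm: duality} (equivalently Corollary \ref{cor- main theorem}) forces $(I^{\sat}/I)_\bnu=0$; hence $I_\bnu=I^{\sat}_\bnu$ and $\Coker(\MM_\bnu)=B^{\sat}_\bnu$. For \eqref{eq=nucond2}, the image of \eqref{eq:MGHybMat} is $I_\bnu+\sum_\balpha A\cdot\Sylv_\balpha$. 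Each $\Sylv_\balpha$ lies in $I^{\sat}$, so this image is contained in $I^{\sat}_\bnu$; conversely, for $g\in I^{\sat}_\bnu$ Theorem \ref{thm:main} writes its class in $(I^{\sat}/I)_\bnu$ uniquely as $\sum_\balpha\ell_\balpha\,\sylv_\balpha$ with $\ell_\balpha\in A$, whence $g-\sum_\balpha\ell_\balpha\Sylv_\balpha\in I_\bnu$ and $g$ lies in the image. Thus the image equals $I^{\sat}_\bnu$ and $\Coker(\HH_\bnu)=C_\bnu/I^{\sat}_\bnu=B^{\sat}_\bnu$. In either case the matrix is a presentation matrix of $(B^{\sat})_\bnu$.

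\emph{Specialization.} Since $\MM_\bnu$ and $\HH_\bnu$ are matrices of $A$-linear maps and cokernels commute with the base change $A\to\KK$ attached to $\bff$ (right exactness of $-\otimes_A\KK$), and since $C_\bnu\otimes_A\KK=R_\bnu$ while the image of $I^{\sat}_\bnu$ is $I^{\sat}(\bff)_\bnu$ by definition, one gets $\Coker(\MM_\bnu(\bff))\cong B^{\sat}(\bff)_\bnu$ and likewise $\Coker(\HH_\bnu(\bff))\cong B^{\sat}(\bff)_\bnu$. Hence $\MM_\bnu(\bff)$, resp. $\HH_\bnu(\bff)$, is surjective if and only if $B^{\sat}(\bff)_\bnu=0$.

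\emph{Geometric translation.} Let $X_\bff\subseteq\PP^{n_1}_\KK\times\cdots\times\PP^{n_r}_\KK$ be the subscheme defined by $\bff$. First, $\widetilde{I^{\sat}(\bff)}$ is the ideal sheaf $\mathcal I_{X_\bff}$: indeed $I(\bff)\subseteq I^{\sat}(\bff)\subseteq I(\bff)^{\sat}$ and sheafification carries both ends to $\mathcal I_{X_\bff}$. Next, the exact sequence comparing $B^{\sat}(\bff)$ with the module of sections of $\mathcal O_{X_\bff}$ yields $B^{\sat}(\bff)_\bnu\cong H^0(X_\bff,\mathcal O_{X_\bff}(\bnu))$ as soon as $H^0_\bb(B^{\sat}(\bff))_\bnu=H^1_\bb(B^{\sat}(\bff))_\bnu=0$. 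These vanishings hold in the range occurring here: for \eqref{eq=nucond2} because $\bnu=\bdelta-\bmu$ lies in $\bdelta-(\min_i d_{i,1}-1,\dots,\min_i d_{i,r}-1)+\NN^r$, which is exactly the range of Proposition \ref{prop:MGHFHP}; and for \eqref{eq=nucond1} by the same spectral-sequence argument (giving $H^0_\bb(B^{\sat})_\bnu=H^1_\bb(B^{\sat})_\bnu=0$ for $\bnu\notin\Gamma_0\cup\Gamma_1$) followed by the specialization step of that proof. Since $\bnu\in\NN^r$, the invertible sheaf $\mathcal O_{X_\bff}(\bnu)$ is globally generated, so $H^0(X_\bff,\mathcal O_{X_\bff}(\bnu))=0$ precisely when $X_\bff=\emptyset$, i.e. precisely when $\bff$ has no common zero in $\PP_{\bar\KK}^{n_1}\times\cdots\times\PP_{\bar\KK}^{n_r}$. (Equivalently: ``no common zero $\Rightarrow$ surjective'' follows from Proposition \ref{prop:MGHFHP} applied to the empty — hence finite, of degree $0$ — subscheme, while ``common zero $\Rightarrow$ not surjective'' follows by exhibiting a multidegree-$\bnu$ monomial that does not vanish at the common zero and therefore cannot lie in $I(\bff)^{\sat}\supseteq I^{\sat}(\bff)$.)

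\emph{Main obstacle.} The substantive inputs are packaged in Corollary \ref{cor- main theorem} and Theorem \ref{thm:main}, so the cokernel identification is short. The delicate point is the last step: verifying that a $\bnu$ satisfying \eqref{eq=nucond1} or \eqref{eq=nucond2} lies in the degree range where $H^0_\bb$ and $H^1_\bb$ of $B^{\sat}$ vanish (equivalently, where the specialization map $I^{\sat}_\bnu\to I(\bff)^{\sat}_\bnu$ is onto), which is the multigraded analogue of Lemma \ref{lem:SGregIsat}/Proposition \ref{prop:MGHFHP} and requires the combinatorial bookkeeping with $\Gamma_0,\Gamma_1$ from Corollary \ref{cor:Gammai}.
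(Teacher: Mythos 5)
Your first two steps — identifying the generic cokernel and passing to a specialization — are correct and are exactly what the paper's terse one‑line proof is implicitly invoking: Theorem \ref{thm: duality} (via $C_{\bdelta-\bnu}=0$ when $\bnu\notin\bdelta-\NN^r$) for case \eqref{eq=nucond1}, and Theorem \ref{thm:main} for case \eqref{eq=nucond2}. The ``common zero $\Rightarrow$ not surjective'' direction via a multidegree-$\bnu$ monomial is also fine (and is needed, since the paper gives no argument for the ``in particular'' part).

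The gap is in ``no common zero $\Rightarrow$ surjective'' for degrees satisfying \eqref{eq=nucond1}. Both of your justifications only apply in the smaller region \eqref{eq=cond3}. The parenthetical route invokes Proposition \ref{prop:MGHFHP}, whose hypothesis is precisely $\bnu\in\bdelta-(\min_i d_{i,1}-1,\dots,\min_i d_{i,r}-1)+\NN^r$, i.e.\ \eqref{eq=cond3}. The main route asserts $H^1_{\bb}(B^{\sat})_\bnu=0$ whenever $\bnu\notin\Gamma_0\cup\Gamma_1$; this is not what the spectral sequence gives. In the vertical spectral sequence, the entries computing total cohomological degree $-1$ are the $^vE^\infty_{p,p-1}$; for $p<n+1$ these are controlled by $\Gamma_0$, but the corner entry $^vE^2_{n+1,n}=H_n\bigl(H^{n+1}_{\bb}(K_\bullet)\bigr)$ is controlled by $\bigcup_i\bigl(\bdelta-\bd_i-\NN^r\bigr)$ (equivalently, by Lemma \ref{lem: bdelta-di, Gamma}, by the complement of \eqref{eq=cond3}), not by $\Gamma_1$. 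Condition \eqref{eq=nucond1} can genuinely be strictly weaker than \eqref{eq=cond3} when $r\geq 2$: for instance with $r=2$, $n_1=1$, $n_2=2$ and four forms of degree $(1,1)$, one has $\bdelta=(2,1)$; the degree $\bnu=(1,2)$ satisfies \eqref{eq=nucond1} but fails \eqref{eq=cond3}. So for such $\bnu$ neither of your arguments establishes $B^{\sat}(\bff)_\bnu=0$ when $\bff$ has no common zero.

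The clean way to close this — and, I suspect, what ``in particular'' is meant to signal — is via the elimination ideal $\Af=(I:\bb^\infty)\cap A=I^{\sat}\cap A$. Since $\Af\subset I^{\sat}$ and $I^{\sat}$ is an ideal, $\Af\cdot C_\bnu\subset I^{\sat}_\bnu$ for every $\bnu$, i.e.\ $\Af$ annihilates $B^{\sat}_\bnu$ for \emph{all} $\bnu$ with no restriction on degrees. After the base change $A\to\KK$ attached to $\bff$, the $\KK$-vector space $\Coker\bigl(\MM_\bnu(\bff)\bigr)\cong B^{\sat}_\bnu\otimes_A\KK$ is therefore annihilated by the image of $\Af$ in $\KK$. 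By the standard theory of the multihomogeneous resultant (\cite{GKZ,Remond}), $\Af$ is generated by a nonzero resultant $\Res\in A$, and $\Res(\bff)\neq 0$ exactly when $\bff$ has no common zero in $\PP^{n_1}_{\bar\KK}\times\cdots\times\PP^{n_r}_{\bar\KK}$. Hence ``no common zero'' forces a nonzero scalar to annihilate the cokernel, which must then vanish. Combined with your monomial argument for the converse, this proves the ``in particular'' claim for every $\bnu$ satisfying \eqref{eq=nucond1} or \eqref{eq=nucond2}, without invoking Proposition \ref{prop:MGHFHP} or any cohomological vanishing beyond what is needed for the first sentence.
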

\begin{proof} If $\bnu$ satisfies \eqref{eq=nucond1} this result follows from Theorem \ref{thm: duality} and if $\bnu$ satisfies \eqref{eq=nucond2} it follows from Theorem \ref{thm:main}.
\end{proof}

\subsection{The drop-of-rank property} As already mentioned, a key feature of elimination matrices for solving polynomial systems with coefficients over a field is to have the drop-of-rank property. It turns out that the family of matrices $\MM_\bnu$ defined above has this property. 

To be more precise, consider a multigraded zero-dimensional polynomial system with coefficients in a field $\mathbb{K}$: $\bff:= \{f_0,\dots, f_{n}\}$ are $n+1$ multihomogeneous polynomials of degree $\bd_1,\dots, \bd_{n+1}$ in $R = \mathbb{K}[\bx_0,\dots, \bx_r]$. These polynomials are specialization of the $n+1$ generic polynomials $F_0,\dots, F_{n}$. 

\begin{prop}\label{prop:MGHybMat} Assume  $\bff$ defines a finite subscheme in $\PP_{\bar{\KK}}^{n_1}\times \cdots \times \PP_{\bar{\KK}}^{n_r}$ of degree $\kappa$. If
\begin{equation}\label{eq=cond3}
\bnu \in \bdelta- (\min_{i} d_{i,1}-1,\ldots ,\min_{i}d_{i,r}-1)+\NN^r,	
\end{equation}	
the corank of $\MM_\bnu(\bff)$, or $\HH_\nu(\bff)$ depending on $\bnu$, is equal to $\kappa$.
\end{prop}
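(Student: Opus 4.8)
The plan is to reduce the statement to an identification of the cokernel of the relevant matrix with a graded component of $B^{\sat}(\bff)$, and then invoke the Hilbert function computation from Proposition \ref{prop:MGHFHP}. The first step is to observe that the condition \eqref{eq=cond3} splits the admissible degrees $\bnu$ into exactly the two regimes treated above: either $\bnu$ satisfies \eqref{eq=nucond1} (so the matrix in question is $\MM_\bnu$, a multigraded Macaulay-type matrix) or $\bnu$ satisfies \eqref{eq=nucond2} (so the matrix is the hybrid matrix $\HH_\bnu$). Indeed, by Lemma \ref{lem: bdelta-di, Gamma}, every $\bnu$ in the region \eqref{eq=cond3} avoids $\Gamma_0\cup\Gamma_1$, so the only dichotomy is whether $\bnu\in\bdelta-\NN^r$ or not, which is exactly the distinction between \eqref{eq=nucond1} and \eqref{eq=nucond2}. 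In either case, by the previous proposition (the one asserting that $\MM_\bnu$, resp.\ $\HH_\bnu$, is a presentation matrix of $(B^{\sat})_\bnu$), specialization commutes with taking cokernels because tensor product is right exact: the cokernel of $\MM_\bnu(\bff)$, resp.\ $\HH_\bnu(\bff)$, is $B^{\sat}(\bff)_\bnu = (R/I^{\sat}(\bff))_\bnu$.

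The second step is purely numerical: the corank of a matrix over the field $\KK$ equals the $\KK$-dimension of its cokernel, so we must show $\dim_\KK B^{\sat}(\bff)_\bnu = \kappa$. Since $\bff$ defines a finite subscheme of $\PP^{n_1}_{\bar\KK}\times\cdots\times\PP^{n_r}_{\bar\KK}$ of degree $\kappa$, Proposition \ref{prop:MGHFHP} applies and gives precisely $\HF_{B^{\sat}(\bff)}(\bnu)=\kappa$ for all $\bnu$ in the region \eqref{eq=cond3}, which is the same region as in the hypothesis of that proposition. Combining the two steps yields $\operatorname{corank} = \dim_\KK B^{\sat}(\bff)_\bnu = \HF_{B^{\sat}(\bff)}(\bnu)=\kappa$, as claimed.

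The only genuinely delicate point is making sure the two constructions glue correctly at the boundary and that no hypothesis of Proposition \ref{prop:MGHFHP} is missed: one must check that the degree region \eqref{eq=cond3} really does partition into \eqref{eq=nucond1} and \eqref{eq=nucond2} (using $\min_i d_{i,j}\ge 1$ so that $\bdelta-(\min_i d_{i,1}-1,\dots)+\NN^r$ is covered), and that the presentation-matrix statement cited above is valid after specialization, i.e.\ that $B^{\sat}$ specializes compatibly. The latter is already recorded: the remark following Proposition \ref{prop:MGHFHP} notes that $I^{\sat}_\bnu\to I(\bff)^{\sat}_\bnu$ is surjective throughout \eqref{eq=cond3}, so the generic presentation matrices specialize to presentation matrices of $B(\bff)^{\sat}_\bnu$, and since $B(\bff)^{\sat}$, $B^{\sat}(\bff)$, and $B(\bff)$ share the same Hilbert polynomial $\kappa$ and agree in this degree range (again by Proposition \ref{prop:MGHFHP}), everything is consistent. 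I therefore expect the write-up to be short, the main (minor) obstacle being the bookkeeping that the degree constraints line up exactly.
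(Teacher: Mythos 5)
Your proposal is correct and follows the same line as the paper's own proof: use Lemma \ref{lem: bdelta-di, Gamma} to split the degree range \eqref{eq=cond3} into the cases \eqref{eq=nucond1} and \eqref{eq=nucond2}, identify the cokernel of the specialized matrix with $(B^{\sat}(\bff))_\bnu$ via right exactness of the tensor product and the preceding presentation-matrix proposition, and then invoke Proposition \ref{prop:MGHFHP}. The only small imprecision is in your closing discussion: the compatibility of specialization with the presentation matrix already follows from right exactness, whereas the surjectivity remark after Proposition \ref{prop:MGHFHP} addresses the separate comparison between $B^{\sat}(\bff)$ and $B(\bff)^{\sat}$ and is not actually needed here.
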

\begin{proof} Let $\bnu$ be a multi-index satisfying \eqref{eq=cond3}, by Lemma \ref{lem: bdelta-di, Gamma}, $\bnu$ satisfies \eqref{eq=nucond1} or \eqref{eq=nucond2}. Therefore, the claimed result follows straightforwardly from Proposition \ref{prop:MGHFHP} as $\MM_\bnu(\bff)$, or $\HH_\nu(\bff)$ depending on $\bnu$, yields a presentation matrix of the $\KK$-vector space $(B^{\sat}(\bff))_\bnu$.
\end{proof}

To conclude, we provide two illustrative examples.

\begin{exmp}\label{example1}
Consider the case of the $n+1=5$ generic bihomogeneous  polynomials of degree $\bd= (3,3)$ over $\PP^2\times \PP^2$ ($r=2$, $n_1=n_2=2$). Form definitions, $\bdelta = (12,12)$ and 
\begin{align*}
\Gamma_0 &= \left((3,6)+ (-\N, \N)\right) \cup\left( (6,3)+ (\N, -\N)\right), \\
\Gamma_1 &= \left((6,9)+ (-\N, \N)\right) \cup \left((9,6)+ (\N, -\N)\right),\\
\Gamma_2 &= \left((9,12)+ (-\N, \N) \right) \cup \left((12,9)+ (\N, -\N)\right).
\end{align*}

In this case, the determinant $\Dc$ introduced in Section \ref{sec:JouanoloySylvForm} has degree $(12-2,12)= (10,12)$ and by Proposition \ref{prop:JPJ}, the twisted Jacobian determinant $\Delta:= x_{1,2}^2\cdot \Dc$ is a generator of $H^0_{\bb}(B)_\bdelta$. More generally, let $\balpha_1=(\alpha_0^{(1)},\alpha_1^{(1)},\alpha_2^{(1)})$ and $\balpha_2=(\alpha_0^{(2)},\alpha_1^{(2)},\alpha_2^{(2)})$ be multi-indices such that $0\leq |\balpha_i| \leq 2$. By Theorem \ref{thm:main}, the Sylvester forms $\sylv_{\balpha_1,\balpha_2}$ introduced in Section \ref{sec:MultiSylvForms} yield bases of $H^0_{\bb}(B)_{\bnu}$ with $\bnu=(12-|\balpha_1|,12-|\balpha_2|)$.

The matrix $\HH_{(12,12)}$ is the matrix of a map of the form $C_{(9,9)}^3\oplus A \rightarrow C_{(12,12)}$ and it is of size $15126\times 8281$. It is a Macaulay-type matrix except for one column which is filled with the coefficient of the twisted Jacobian determinant. Such a matrix already appeared in \cite[Proposition 2.1]{CDS98}. From Proposition \ref{prop:MGHybMat}, the smallest elimination matrix having the drop-of-rank property we get is $\HH_{(10,10)}$; it is the matrix of a map of the form $ C_{(7,7)}^3\oplus A^{36} \rightarrow C_{(10,10)}$ and it is of size $6516\times 4356$.
\end{exmp}

\begin{exmp}[Dixon resultant matrices]\label{exmp: Dixon} Dixon in \cite{Dixon} describes three determinantal formulas for computing the resultant of three generic bihomogeneous polynomials of the same degree $(m,n)$ over $\PP^1\times \PP^1$. These determinants are of order $6mn,4mn$, and $2mn$ and the entries of the corresponding  matrices are respectively homogeneous of degree $1,2$, and $3$ in the coefficients. It turns out that the elimination matrices $\MM_\bnu$ we have defined include the Dixon determinants of order $6mn$ and $4mn$, as well as some other intermediate matrices that already appeared in \cite[Section 5]{Goldman}. 
	
To be more precise, we consider the case the $n+1=3$ generic bihomogeneous polynomials over $\PP^1\times \PP^1$ ($r=2$, $n_1=n_2=1$). 
From definition, $\bdelta=(3m-2,3n-2)$, and 
\begin{align*}
\Gamma_0 &= (m-2,n)+ (-\N, \N) \cup (m,n-2)+ (\N, -\N), \\
\Gamma_1 &= (2m-2,2n)+ (-\N, \N) \cup (2m,2n-2)+ (\N, -\N),\\
\Gamma_2 &= (3m-2,3n)+ (-\N, \N) \cup (3m,3n-2)+ (\N, -\N).
\end{align*}
As $\Gamma_0 \subset (\bdelta-\NN^r)\cup \Gamma_1$, Corollary \ref{cor- main theorem} implies that $(I^{\sat}/I)\bnu=0$ for any $\bnu \notin (\bdelta-\NN^r)\cup \Gamma_1$. In the following picture we plot the regions $(\bdelta-\NN^r)$ in grey and $\Gamma_1$ in yellow. 

\medskip

\begin{center}
\definecolor{aqaqaq}{rgb}{0.6274509803921569,0.6274509803921569,0.6274509803921569}
\definecolor{cqcqcq}{rgb}{0.7529411764705882,0.7529411764705882,0.7529411764705882}
\definecolor{ffqqqq}{rgb}{1.,0.,0.}
\begin{tikzpicture}[line cap=round,line join=round,>=triangle 45,x=1cm,y=1cm]
\draw[->,color=black] (-1.5,0.) -- (7,0.);
\draw[->,color=black] (0.,-0.5) -- (0.,6);
\draw[color=black] (0pt,-10pt) node[right] {\footnotesize $0$};
\clip(-1.5,-0.5) rectangle (7,6);
\fill[line width=2.pt,color=aqaqaq,fill=aqaqaq,fill opacity=0.10000000149011612] (4.,4.) -- (4.,-3.) -- (-2.,-2.) -- (-2.,4.) -- cycle;
\fill[line width=2.pt,color=yellow,fill=yellow,fill opacity=0.30000000149011612] (2.,3.) -- (2.,10.) -- (-2.,10.) -- (-2.,3.) -- cycle;
\fill[line width=2.pt,color=yellow,fill=yellow,fill opacity=0.30000000149011612] (3.,2.) -- (3.,-2.) -- (10.,-2.) -- (10.,2.) -- cycle;

\fill[line width=2.pt,color=pink,fill=pink,fill opacity=0.5] (2.5,4) -- (2.5,2.5) -- (4.,2.5) -- (4,4) -- cycle;

\draw [line width=2.pt,color=aqaqaq] (4.,4.)-- (4.,-3.);
\draw [line width=2.pt,color=aqaqaq] (4.,-3.)-- (-2.,-2.);
\draw [line width=2.pt,color=aqaqaq] (-2.,-2.)-- (-2.,4.);
\draw [line width=2.pt,color=aqaqaq] (-2.,4.)-- (4.,4.);
\draw [line width=2.pt,color=yellow] (2.,3.)-- (2.,10.);
\draw [line width=2.pt,color=aqaqaq] (2.,10.)-- (-2.,10.);
\draw [line width=2.pt,color=aqaqaq] (-2.,10.)-- (-2.,3.);
\draw [line width=2.pt,color=yellow] (-2.,3.)-- (2.,3.);
\draw [line width=2.pt,color=yellow] (3.,2.)-- (3.,-2.);
\draw [line width=2.pt,color=aqaqaq] (3.,-2.)-- (10.,-2.);
\draw [line width=2.pt,color=aqaqaq] (10.,-2.)-- (10.,2.);
\draw [line width=2.pt,color=yellow] (10.,2.)-- (3.,2.);
\begin{scriptsize}
\draw [fill=black] (4.,4.) circle (1.5pt);
\draw[color=black] (5.123952757134727,4.248232930089371) node {$\bdelta = (3m-2,3n-2)$};
\draw [fill=black] (4.,-3.) circle (1.5pt);
\draw [fill=black] (-2.,-2.) circle (1.5pt);

\draw [fill=black] (2.,3.) circle (1.5pt);
\draw[color=black] (1.3,2.7) node {$(2m-2,2n)$};

\draw [fill=black] (2.,3.) circle (1.5pt);
\draw [fill=black] (3.,2.) circle (1.5pt);
\draw [fill=ffqqqq] (2.5,2.5) circle (2pt);
\draw [color=ffqqqq] (2.5,2.5) circle (2pt);
\draw[color=black] (2.1,1.8) node {$(2m,2n-2)$};
\draw[color=black] (3.220808870431704,4.740491540665641) node {$(2m-1,3n-1)$};
\draw [fill=ffqqqq] (4.5,2.5) circle (2pt);
\draw [color=ffqqqq] (4.5,2.5) circle (2pt);
\draw [fill=ffqqqq] (2.5,4.5) circle (2pt);
\draw [color=ffqqqq] (2.5,4.5) circle (2pt);

\draw[color=black] (5.116211367710998,2.74050890433863536) node {$(3m-1,2n-1)$};
\draw [line width=2.pt,dash pattern=on 2pt off 2pt,color=ffqqqq] (4.4,2.5)-- (2.5,2.5);
\draw [line width=2.pt,dash pattern=on 2pt off 2pt,color=ffqqqq] (2.5,4.4)-- (2.5,2.5);

\end{scriptsize}
\draw[color=black] (3.25,3.25) node {$\HH_{\bnu}$};
\draw[color=black] (3.5,5.5) node {$\MM_{\bnu}$};
\draw[color=black] (5.5,3.5) node {$\MM_{\bnu}$};
\end{tikzpicture}
\end{center}

\medskip

\noindent By Proposition \ref{prop:MGHybMat}, the matrices $\HH_\bnu$ and $\MM_{\bnu}$ such that $\bnu=(\nu_1,\nu_2)$ with $\nu_1\geq 2m-1$ and $\nu_2\geq 2n-1$ are all elimination matrices that have the drop-of-rank property. Those in the white area are purely of Macaulay type whereas those in the pink area involve Sylvester forms. Among these matrices, we can identify the following Dixon determinants that are located on the red dotted segments in the above picture:
\begin{itemize}
	\item $\MM_{(2m-1,3n-1)}$ and $\MM_{(2m-1,3n-1)}$ are square matrices associated to maps of the form  $C_{(m-1, 2n-1)}^3\rightarrow C_{(2m-1,3n-1)}$. They both correspond to the Dixon determinants of order $6mn$. 
	\item $\HH_{2m-1,2n-1}$ is a square matrix associated with a map of the form $C_{(m-1,n-1)}^3\oplus A^{mn}\rightarrow C_{(2m-1,2n-1)}$. It corresponds to the Dixon determinant of order $4mn$.
	\item Let $i$ be an integer such that $0< i < m$. The matrix $\HH_{(2m-1+i,2n-1)}$ is associated with the map
	$$
	C_{(m-1+i,n-1)}^3\oplus A^{|\mathcal{I}|}\rightarrow C_{(2m-1+i,2n-1)},
	$$
	where $\mathcal{I}$ is the set of Sylvester forms of degree $(2m-1+i,2n-1)$. By Theorem \ref{thm: duality},   $|\mathcal{I}|= (m-i)n$ as it is equal to the number of monomials of degree $\bdelta - (2m-1+i,2n-1)$. Therefore, 
 $\HH_{(2m-1+i,2n-1)}$ is a square matrix. Similarly, the same conclusion holds for the matrices $\HH_{(2m-1,2n-1+j)}$ with $0<j<n$. Thus, the formalism we introduced recovers this extended family of Dixon determinants that already appeared in \cite[Section 5]{Goldman}.
 \end{itemize}
\end{exmp}

\bigskip 

\subsection*{Acknowledgment} We thank the anonymous reviewer for several comments that allowed us to improve the clarity of the presentation, and also for pointing out to us the article \cite{CCD97} (see Remark \ref{rem:ref}).   


\def\cprime{$'$}

\end{document}